\newcommand{\be}{\begin{equation}}
\newcommand{\ee}{\end{equation}}
\newcommand{\beq}{\begin{eqnarray}}
\newcommand{\eeq}{\end{eqnarray}}
\theoremstyle{definition}
\newtheorem{thm}{Theorem}
\newtheorem{cl}{Corollary}
\newtheorem{prop}{Proposition}
\newtheorem{lm}{Lemma}
\newtheorem{clm}{Claim}
\newtheorem{rmk}{Remark}
\theoremstyle{remark}
\numberwithin{equation}{section}
\def\be{\begin{equation}}
\def\ee{\end{equation}}
\newcommand{\R}{\mathbb{R}}
\newcommand{\M}{\mathcal{M}^{n}}
\newcommand{\N}{\mathcal{N}}
\newcommand{\scal}{\mathcal{R}}
\newcommand{\n}{\text{\textbf{n}}}
\DeclareMathOperator{\Ric}{Ric}
\DeclareMathOperator{\Hess}{Hess}
\DeclareMathOperator{\vol}{Vol}
\DeclareMathOperator{\BigO}{O}
\DeclareMathOperator{\inj}{inj}
\DeclareMathOperator{\Rm}{Rm}
\DeclareMathOperator{\diam}{diam}
\begin{document}

\title[]
{Curvature estimates for steady and expanding solitons in higher dimensions}

\author{Pak-Yeung Chan}
\address[Pak-Yeung Chan]{
Department of Mathematics
National Tsing Hua University,
Hsinchu,
Taiwan
}
\email{pychan@math.nthu.edu.tw}

\author{Ming Hsiao}
\address[Ming Hsiao]{National Center for Theoretical Sciences,
Taipei 10617,
Taiwan
}
\email{minghsiao@ncts.ntu.edu.tw}

\renewcommand{\subjclassname}{
  \textup{2020} Mathematics Subject Classification}
\subjclass[2020]{53C25
}

\date{\today}

\begin{abstract}
In this paper, we demonstrate certain curvature estimates on complete non-compact steady and expanding gradient Ricci solitons in higher dimensions. In the expanding case, we prove that if the Ricci curvature decays at least quadratically, then the curvature operator decays at the rate $\BigO(1/r^{2})$ when $n=4$ and $\BigO((\log r)/r^{2})$ when $n\ge 5$. This refines the curvature bounds in a previous result by Cao-Liu-Xie, and removes the nonnegative Ricci curvature assumption in the estimates by Cao-Liu and Cao-Liu-Xie. As a geometric application, we establish the existence and uniqueness of $C^{1,\alpha}$ conical structure at infinity of Ricci expander with finite Ricci curvature ratio. 
In the steady case, using an integral estimate of the curvature, we prove that the curvature operator has at most polynomial growth when the potential function is proper and the Ricci curvature has linear decay. Moreover, we also confirm that the curvature is bounded if we further assume the Ricci curvature has super-linear decay $\BigO(r^{-1-\varepsilon})$. As an application, we prove the existence and uniqueness of cylindrical structure at infinity of steady soliton with super-linear Ricci curvature decay and proper potential function. 
\end{abstract}

\keywords{steady Ricci soliton, expanding Ricci soliton}

\maketitle

\markboth{Pak-Yeung Chan, Ming Hsiao}{Curvature estimates for steady and expanding solitons in higher dimensions}

\section{Introduction}
The Ricci soliton is a natural generalization of an Einstein manifold and has been the subject of extensive study over the past four decades. As a self-similar solution to the Ricci flow, it plays a central role in the analysis of flow singularities. Consequently, understanding the geometry of Ricci soliton is of fundamental importance.

A gradient Ricci soliton $(\M,g,f)$ is a Riemannian manifold $(\M,g)$ with a smooth function $f\in\mathcal{C}^{\infty}(\M)$ such that 
\be
    \Ric_{g}+\Hess_{g}(f)=\frac{\lambda}{2}g,
\ee
for some constant $\lambda\in\R$. The soliton gradient Ricci soliton $(\M,g,f)$ is called \textit{shrinking, steady, expanding} Ricci soliton if \textit{$\lambda$ is positive, zero, negative}, respectively.
Upon a scaling, we can always normalize $\lambda$ such that \textit{$\lambda=1, 0, -1$} when the soliton is \textit{shrinking, steady, expanding} respectively. In this paper, we shall investigate the curvature of complete non-compact steady and expanding solitons in dimension no less than $4$.
\vskip0.2cm

A natural question concerning solitons is which curvature quantity plays the most essential role. For example, it is well known that for any complete shrinking or steady gradient Ricci soliton, the scalar curvature $\scal$ is always nonnegative \cite{C09}, whereas in the expanding case one has $\scal \geq -n/2$ \cite{PRS11, Z11}. Therefore, the soliton equation inherently encodes certain geometric constraints on the underlying manifold. On the other hand, if we require some decay of curvature, the structure at infinity of the soliton is relatively rigid. For example, Chen-Deruelle proved that for the expanding soliton, if the curvature has quadratic decay, then the expander must be asymptotic to a metric cone in $\mathcal{C}^{1,\alpha}$-sense \cite{CD15}. In shrinking cases, Munteanu-Wang showed that the shrinker is smoothly asymptotically conical if the Ricci tensor $|{\Ric}|$ decays to $0$ at infinity \cite{MW17} (see also \cite{KW15}). Deng-Zhu also proved that a $\kappa$-noncollapsed steady soliton with linear curvature decay and $\Rm\ge 0$ must be rotationally symmetric, and thus isometric to the Bryant soliton \cite{DZ20} (see also Brendle's result \cite{B13, B14}).
\vskip0.2cm

In this paper, we study the curvature estimate on complete non-compact gradient Ricci solitons. The weak curvature pinching estimates for gradient Ricci solitons have attracted considerable attention in the literature. See, for instance, \cite{CC20, CL22, C22, C19, CFSZ20, MW15, CX24, CZ22, Z21} for some recent progress in this direction. In dimension three, the Hamilton-Ivey estimates say any complete ancient Ricci flow of dimension three has nonnegative sectional curvature (see also \cite{CX25, CMZ25} for some pinching estimates on Ricci expanders). As a result, three dimensional complete Ricci shrinker and steadies must be nonnegatively curved. In dimension four, Munteanu-Wang \cite{MW15} used a dimension reduction technique via the level set of $f$ to show that a Ricci shrinker with bounded scalar curvature must satisfy $|{\Rm}|\leq C\scal$ for some positive constant $C$ (see also \cite{CC20}). In the absence of an analogous method in higher dimensions, Munteanu and Wang employed integration by parts and iteration techniques to investigate the geometry at infinity of Ricci shrinker (see also \cite{MW10}). In the expanding case, their approach was further developed by Cao-Liu \cite{CL22} and Cao-Liu-Xie \cite{CLX23}. 
Cao-Liu \cite{CL22} proved that for a gradient Ricci expander of dimension $4$ with nonnegative Ricci curvature and finite asymptotic scalar curvature ratio i.e. $\limsup_{x\to\infty}r^2\scal <\infty$, the asymptotic curvature ratio must be finite,
\[
\limsup_{x\to\infty}r^2 |{\Rm}|< \infty.
\]
Under the same curvature condition, Cao-Liu-Xie \cite{CLX23} generalized the result to dimension $n\ge 5$ and showed that the curvature tensor decays almost quadratically, more precisely for any $\alpha\in (0,2)$,
\[
\limsup_{x\to\infty}r^\alpha|{\Rm}|< \infty.
\]
\vskip0.2cm
Our first result removes the nonnegative Ricci curvature assumption in \cite{CL22, CLX23} for $n\ge 4$ and improves curvature decay rate to $\BigO(\frac{\log r}{r^{2}})$ for $n\ge 5$.

\begin{thm}\label{thm-expander with quadratic decay}
    Let $(\M,g,f)$ be an $n$-dimensional complete gradient expanding Ricci soliton with finite asymptotic Ricci curvature ratio
    \be\label{eq-quadratic decay of Ric}
    \limsup_{r\rightarrow\infty}|{\Ric}|r^{2}=:\eta<\infty,
    \ee
    where $n\ge 4$ and $r:=r(x)$ is the distance function to some fixed point $x_{0}\in\M$. Then for any $n\ge 5$, there is a constant $C>0$ such that
    \be
    |{\Rm}|(x)\leq\frac{C\log r(x)}{r(x)^{2}}
    \ee
    when $r(x)\gg 1$ is sufficiently large. Moreover, when $n=4$, the asymptotic curvature ratio is finite, i.e. for $r(x)\gg 1$,
    \be
    |{\Rm}|(x)\leq\frac{C}{r(x)^{2}}.
    \ee
\end{thm}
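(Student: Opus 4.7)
The plan is to adapt the weighted integration-by-parts and $L^{p}$-iteration scheme of Munteanu--Wang, Cao--Liu, and Cao--Liu--Xie to the weaker hypothesis \eqref{eq-quadratic decay of Ric}, feeding the quadratic Ricci decay directly into the Bochner inequality so as to eliminate the need for $\Ric \ge 0$. The weighted measure $e^{-f}\,d\vol$ and the Bakry--\'{E}mery identity $\Ric + \Hess f = -\tfrac{1}{2}g$, which is immediate from the expander equation, supply all the comparison geometry one needs in the absence of a Ricci sign.

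First I would translate \eqref{eq-quadratic decay of Ric} into statements about the potential. Using the soliton equation together with the integrability relation $\scal + |\nabla f|^{2} + f = \text{const}$, the bound $|\Ric| \le \eta r^{-2}$ gives $|\Hess f + \tfrac{1}{2}g| = \BigO(r^{-2})$ and $|\scal| = \BigO(r^{-2})$; integrating along minimizing geodesics from $x_{0}$ then yields $-f(x) = \tfrac{1}{4}r(x)^{2} + \BigO(\log r)$ and $|\nabla f|(x) = \tfrac{1}{2}r(x) + \BigO(r^{-1})$. The quantity $\rho := 2\sqrt{-f}$ is therefore equivalent to the distance function at infinity, and I would use it as a proxy for distance, with the level sets $\{-f = R^{2}/4\}$ playing the role of geodesic spheres. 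Weighted Bishop--Gromov volume comparison (Wei--Wylie) applied to $\Ric_{f} = -\tfrac{1}{2}g$ then substitutes for the classical Bishop--Gromov comparison used in the $\Ric \ge 0$ proofs.

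Next I would extract the scalar differential inequality from the soliton Bochner formula $\Delta_{f}\Rm = -\Rm + \Rm \ast \Rm$ (schematic, with $\lambda = -1$). Kato's inequality gives
\begin{equation*}
\Delta_{f}|\Rm| \ge -|\Rm| - C_{n}|\Rm|^{2}.
\end{equation*}
Multiplying by $|\Rm|^{p-1}\phi^{2}$ for a smooth cutoff $\phi = \phi(\rho)$ supported in a dyadic annulus $A_{R} = \{R \le \rho \le 2R\}$, integrating against $e^{-f}\,d\vol$ via the natural formula $\int (\Delta_{f}u)\,w\,e^{-f}\,d\vol = -\int \langle \nabla u, \nabla w\rangle e^{-f}\,d\vol$, and applying the weighted Sobolev inequality on $A_{R}$ gives a recursion that improves any a priori pointwise bound $|\Rm| \le C r^{-\alpha}$ to a better one. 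The base exponent $\alpha > 0$ is supplied by \eqref{eq-quadratic decay of Ric} together with Shi-type derivative estimates, and iteration drives $\alpha$ toward $2$. A final Moser iteration on each $A_{R}$ upgrades the resulting $L^{p}$ control into a pointwise bound. In dimension $n = 4$ the conformal Sobolev exponent is critical with respect to the quadratic scaling and the scheme closes without loss, producing $|\Rm| \le C/r^{2}$; in $n \ge 5$ a residual scaling mismatch of order $(n-4)/2$ forces a logarithmic loss in the final step, giving $|\Rm| \le C\log r / r^{2}$.

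The main obstacle is the quadratic term in the Bochner inequality, whose treatment in the earlier works leaned crucially on the sign of $\Ric$. Wherever Cao--Liu or Cao--Liu--Xie used $\Ric \ge 0$ to discard cross-terms arising from integration by parts --- typically ones of the schematic form $\int\langle \Ric, \, \cdot \,\rangle e^{-f}\,d\vol$ --- I would instead absorb them by brute force using $|\Ric| \le \eta r^{-2}$ from \eqref{eq-quadratic decay of Ric}. The factor $r^{-2}$ is exactly the scale that can be absorbed into the good linear term $-|\Rm|$ after multiplying by the cutoff $\phi$ and applying weighted volume comparison, but only just barely; this tight balance is what makes the dimension-four estimate sharp and why a logarithmic loss is essentially unavoidable for $n \ge 5$.
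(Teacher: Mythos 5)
Your overall framework (weighted integral estimates plus Moser iteration) is the right family of techniques, but the proposal misses the two ideas that actually make the theorem work, and it misidentifies where the hypothesis $\Ric\ge 0$ was used in Cao--Liu and Cao--Liu--Xie. The integral estimate itself survives without $\Ric\ge0$ essentially verbatim (this is \cite[Remark 3.1]{CLX23} and Proposition \ref{int-estimate-expander}); the place where nonnegative Ricci was genuinely needed is the uniform lower bound on $\vol(x,1)$ as $x\to\infty$, without which the Sobolev constant in Moser iteration degenerates. Your substitute --- Bakry--\'Emery/Wei--Wylie comparison for $\Ric_f=-\tfrac12 g$ with weight $e^{-f}\,d\vol$ --- does not supply this: weighted comparison controls (weighted) volumes of balls about a fixed basepoint from above, and with $e^{-f}\sim e^{r^2/4}$ it says nothing about non-collapsing of unit balls at points going to infinity. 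The paper's replacement is a genuinely different ingredient: using $|\Hess F-\tfrac12 g|=\BigO(r^{-2})$ one shows the rescaled level sets $\varphi_{t-R_0}^*t^{-2}g|_{\Sigma_t}$ converge in $\mathcal{C}^0$ to a limit metric, so $(\M,t^{-2}g,p)$ is $\mathcal{C}^0$-asymptotic to a metric cone (Proposition \ref{prop-vol lower bound of expander}), and this cone structure yields $\vol_g(x,1)\ge v>0$ uniformly. Your proposal has no mechanism producing this non-collapsing, so the Moser step is not justified.

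The second gap is the decay rate. The integral-plus-iteration scheme with weight $F^a$, $p>a+c$, caps at pointwise decay $r^{-2a/p}$ with $2a/p<2$; iterating it ``drives $\alpha$ toward $2$'' but can never reach $\BigO(\log r/r^2)$ --- this is exactly why Cao--Liu--Xie only obtained $r^{-\alpha}$ for $\alpha<2$. In the paper this scheme is used only to conclude $|\Rm|\to0$ (hence bounded curvature, hence Shi estimates $|\nabla^2\Ric|=\BigO(1/t)$), and the $\log r/r^2$ bound comes from a different argument: an ODE for $|\Rm|$ along the flow of $\nabla F/|\nabla F|^2$, where the second Bianchi identity $\nabla_j\Rm_{ilkj}=\Rm_{ilkj}f_j$ and the expander equation $\Hess F=\tfrac12 g+\Ric$ produce a good negative term $-|\Rm|/(t+C)$, giving $\bigl((t+C)|\Rm|\bigr)'\le C/t$ and hence $|\Rm|\le C\log t/t$ with $t\sim r^2/4$. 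Relatedly, your explanation of the dimension split (``conformal Sobolev exponent critical in $n=4$, scaling mismatch $(n-4)/2$ for $n\ge5$'') is not an argument and does not reflect the actual mechanism: the quadratic decay in $n=4$ comes from the Munteanu--Wang/Cao--Liu four-dimensional pointwise inequality $|\Rm|\le C_0\bigl(|\Ric|+|\nabla\Ric|/|\nabla f|\bigr)$ combined with local Shi estimates, a dimension-reduction tool with no analogue in $n\ge5$. Without the non-collapsing step, the level-set ODE, and the $n=4$ pointwise inequality, the proposal cannot reach either stated conclusion.
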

\vskip-0.1cm
As pointed out by Cao-Liu-Xie in \cite{CLX23}, it would be interesting to determine the sharp decay rate of the curvature tensor in this scenario. Motivated by the decay rate in dimension $4$, we expect that the optimal decay rate of $|{\Rm}|$ should also be quadratic $r^{-2}$ when $n\ge 5$. 

The proof of Theorem \ref{thm-expander with quadratic decay} consists of two key ingredients. The first one is to control the unit volume ratio $\vol(x,1)$ without assuming nonnegative Ricci curvature. Note that the lower bound of the volume ratio is crucial in applying Moser's iteration. In the nonnegative Ricci case, $\vol(x,1)$ has a uniform lower bound since the Ricci curvature is bounded \cite{CN09}. Without the nonnegative Ricci curvature condition, one needs to control $\vol(x,1)$ in an alternative way. To achieve this, we prove that the asymptotic $\mathcal{C}^{0}$-metric exists when the asymptotic Ricci curvature ratio is finite. Then the $\mathcal{C}^{0}$-convergence to a continuous metric provides a uniform lower bound for the volume of unit balls. This would suffice for the sub-linear Riemann curvature decay of curvature. The next ingredient is to analyze the evolution of curvature along the level set flow of $f$. This gives the curvature decay estimate of the form $O(\log r)/r^{2}$. In dimension 4, this estimate, combined with Munteanu–Wang's dimension-reduction method \cite{MW15}, leads to the quadratic decay of the curvature tensor.
\vskip0.2cm
With the boundedness of curvature tensor from Theorem \ref{thm-expander with quadratic decay}, we generalize a result in \cite[Theorem 2.10]{CD15} by removing the pinching conditions on $\Ric$ and $\nabla\Ric$. To present our result, we denote the functions $F:=-f$,  $z:=2\sqrt{F}$, the level set $\Sigma_{R}:=\{z=R\}$, and a family of diffeomorphisms $\varphi_{t-s}:\Sigma_{s}\rightarrow\Sigma_{t}$ with $\frac{d}{d\tau}\varphi_{\tau}=\frac{\nabla z}{|{\nabla z}|^{2}}$ with $\varphi_0=$ id for $t\geq s\gg1$ so that $\nabla z\neq0$.
\begin{cl}\label{cl-topology of ends on expander}
     Let $(\M,g,f)$ be an $n$-dimensional complete gradient expanding Ricci soliton with finite asymptotic Ricci curvature ratio (\ref{eq-quadratic decay of Ric}). Then outside a compact set $K\Subset\M$, $\M\setminus K$ is a disjoint union of a finite number of ends and is diffeomorphic to $\mathcal{N}^{n-1}\times(0,\infty)$, where $\mathcal{N}^{n-1}$ admits a metric $g_{\mathcal{N}}$ which is an almost $2\eta$-Einstein metric with positive normalized Ricci curvature, that is, on $\mathcal{N}^{n-1}$,
    \be
    \|{\Ric(g_{\mathcal{N}})-(n-2)g_{\mathcal{N}}}\|_{g_{\mathcal{N}}}<2\eta.
    \ee    
    More precisely, $(\mathcal{N}^{n-1},g_{\mathcal{N}}):=(\Sigma_{R},R^{-2}g|_{\Sigma_{R}})$ for any $R\geq R_{0}$ for some $R_{0}$.
\end{cl}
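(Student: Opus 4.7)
The plan is to establish the corollary in two main stages: first, to produce the product decomposition of each end via the level-set flow of $z=2\sqrt{F}$; and second, to derive the almost-Einstein estimate on the rescaled level sets using the Gauss equation together with a soliton-specific contracted Bianchi identity.

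For the topological step, I would combine Hamilton's identity for expanders, $\scal + |\nabla f|^{2} = -f + C_{0}$, with the decay $\scal\to 0$ coming from Theorem \ref{thm-expander with quadratic decay} to deduce $|\nabla z|^{2} = 1 + (C_{0}-\scal)/F \to 1$ as $F\to\infty$. Consequently $\nabla z$ is nonvanishing on $\{z\ge R_{0}\}$ for $R_{0}$ large, and each $\Sigma_{R}$ with $R\ge R_{0}$ is a smooth compact hypersurface (properness of $z$ coming from the standard potential estimate $r\asymp z$). Since $z(\varphi_{\tau}(p))=z(p)+\tau$, the flow $\varphi_{\tau}$ furnishes a diffeomorphism $\Sigma_{s}\to\Sigma_{s+\tau}$ and a global trivialization $\{z\ge R_{0}\}\cong\Sigma_{R_{0}}\times[0,\infty)$. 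Compactness of $\Sigma_{R_{0}}$ then implies it has only finitely many components, each defining an end; taking $K=\{z\le R_{0}\}$ and $\mathcal{N}^{n-1}=\Sigma_{R}$ yields the stated diffeomorphism.

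For the metric estimate, I would first compute $\Hess z = \Hess F/\sqrt{F} - dF\otimes dF/(2F^{3/2})$. Using the soliton equation $\Hess F = \Ric + g/2$ and the fact that $dF$ is normal to $\Sigma_{R}$, the tangential part simplifies to $\Hess z|_{T\Sigma_{R}} = (2\Ric|_{T\Sigma_{R}} + g_{\Sigma_{R}})/R$. Dividing by $|\nabla z|=1+o(1)$ and using $|\Ric|=O(R^{-2})$, the shape operator of $\Sigma_{R}$ is $A = g_{\Sigma_{R}}/R + O(R^{-3})$ as a $(0,2)$-tensor, and the Gauss equation then gives
\begin{equation*}
\Ric^{\Sigma_{R}} - \frac{n-2}{R^{2}}\,g_{\Sigma_{R}} = \Ric^{g}|_{T\Sigma_{R}} - R^{g}(\cdot,\n,\n,\cdot) + O(|\Ric|/R^{2}).
\end{equation*}
Since $\|\cdot\|_{g_{\mathcal{N}}} = R^{2}\|\cdot\|_{g_{\Sigma_{R}}}$ for $(0,2)$-tensors, proving the almost-Einstein bound reduces to controlling the right-hand side by $\eta/R^{2}+o(R^{-2})$.

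The hard part is the Riemann term $R^{g}(X,\n,\n,Y)$ for tangential $X,Y$: the bound $|\Rm|=O((\log R)/R^{2})$ from Theorem \ref{thm-expander with quadratic decay} alone would only yield $O(\log R)$ after rescaling by $R^{2}$, which is too weak. To circumvent this, I would invoke the contracted soliton Bianchi identity $R_{ijkl}\nabla^{l}f=\nabla_{i}R_{jk}-\nabla_{j}R_{ik}$. Since $\n$ is parallel to $\nabla f/|\nabla f|$, this gives $|R^{g}(X,\n,\n,Y)|\le C|\nabla\Ric|/|\nabla f|$; Shi-type interior derivative estimates applied to the self-similar Ricci flow associated to the soliton, together with the curvature decay of Theorem \ref{thm-expander with quadratic decay}, yield $|\nabla\Ric|=O((\log R)^{3/2}/R^{3})$, so the problematic Riemann term is $o(R^{-2})$. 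Collecting everything,
\begin{equation*}
\|\Ric(g_{\mathcal{N}}) - (n-2)g_{\mathcal{N}}\|_{g_{\mathcal{N}}} \le R^{2}|\Ric^{g}| + o(1) \le \eta + \varepsilon
\end{equation*}
for any $\varepsilon>0$ and $R$ sufficiently large, which gives the strict inequality $<2\eta$ provided $\eta>0$; the positivity $\Ric(g_{\mathcal{N}})>0$ follows since the perturbation of $(n-2)g_{\mathcal{N}}$ tends to zero as $R\to\infty$.
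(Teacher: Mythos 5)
Your proposal is correct and follows essentially the same route as the paper: the level-set flow of $z=2\sqrt{F}$ (with $|\nabla z|\to 1$ from Hamilton's identity and the Ricci decay) gives the product structure of the ends, and the almost-Einstein bound comes from the Gauss equation, the computation $h_{R}=\tfrac{1}{R}g|_{\Sigma_{R}}+\BigO(R^{-3})$, and the soliton Bianchi identity $\Rm(\cdot,\n,\n,\cdot)=\BigO(|\nabla\Ric|/|\nabla f|)$ to kill the normal Riemann term, exactly as in the paper's Claim on the almost $\eta$-Einstein property. The only cosmetic difference is that you bound $|\nabla\Ric|$ by a full Shi estimate on $\Rm$ (yielding $\BigO((\log R)^{3/2}/R^{3})$), whereas the paper uses Deruelle's local estimate giving $|\nabla\Ric|=\BigO(R^{-2})$; both suffice for the conclusion $\limsup_{R\to\infty}\|\Ric(g_{\mathcal N})-(n-2)g_{\mathcal N}\|_{g_{\mathcal N}}\leq\eta$.
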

In fact, from Corollary \ref{cl-topology of ends on expander}, we show that the asymptotic tangent cone at infinity is unique and has a $\mathcal{C}^{1,\alpha}$-link $(\Sigma_{R_{0}},g_{\infty})$, which generalizes \cite[Theorem 1.2]{CD15}.
\begin{cl}\label{cl-structure of limit of fiber}
   Let $(\M,g,f)$ be an $n$-dimensional complete gradient expanding Ricci soliton with finite asymptotic Ricci ratio (\ref{eq-quadratic decay of Ric}). Then there is a $R_{0}>0$ and a $\mathcal{C}^{1,\alpha}$-Riemannian metric $g_{\infty}$ on $\Sigma_{R_{0}}$ such that 
   \be
        \varphi_{t-R_{0}}^{*}t^{-2}g|_{\Sigma_{t}}\overset{\mathcal{C}^{1,\alpha}}{\rightarrow}g_{\infty},
   \ee
   on $\Sigma_{R_{0}}$ as $t\rightarrow\infty$. Furthermore, for any $p\in\M$, 
   \be
        (\M,t^{-1}d_{g},p)\overset{\text{pGH}}{\longrightarrow}(C(\Sigma_{R_{0}}),dt^{2}+t^{2}g_{\infty}, o),
   \ee
   as $t\rightarrow\infty$, and the convergence is in $\ \mathcal{C}^{1,\alpha}_{\text{loc}}\left(C(\Sigma_{R_{0}})\setminus\{o\}\right)$-sense as $t\rightarrow\infty$ for any $\alpha\in(0,1)$.
\end{cl}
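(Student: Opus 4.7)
The plan is to analyze the evolution of the rescaled level-set metric $h_t := \varphi_{t-R_0}^{*}(t^{-2} g|_{\Sigma_t})$ along the flow $\varphi_\tau$ of $X := \nabla z/|\nabla z|^{2}$, following the strategy of Chen-Deruelle \cite{CD15} but built on the improved curvature estimate from Theorem \ref{thm-expander with quadratic decay}.

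First I would establish $C^{0}$-convergence of $h_t$. From the expanding soliton equation in the form $\Hess F = \frac{1}{2} g + \Ric$ (with $F := -f$) and the standard identity $|\nabla F|^{2} = F + C_{0} - \scal$, a direct computation gives, for $Y, Z$ tangent to a level set,
\begin{equation*}
\Hess z(Y,Z) = F^{-1/2}\left(\tfrac{1}{2} g(Y,Z) + \Ric(Y,Z)\right), \qquad |\nabla z|^{2} = 1 + \frac{C_{0} - \scal}{F}.
\end{equation*}
Setting $\tilde g_t := \varphi_{t-R_0}^{*} g|_{\Sigma_t}$ and using $z = t$ on $\Sigma_t$, the Ricci decay (\ref{eq-quadratic decay of Ric}), and $r \sim z$ (which follows from $|\nabla z|$ being close to $1$), one obtains
\begin{equation*}
\frac{d \tilde g_t}{dt} = \frac{2}{t}\tilde g_t + \BigO(t^{-3})\tilde g_t, \qquad \text{hence} \qquad \frac{d h_t}{dt} = \BigO(t^{-3}) h_t,
\end{equation*}
where the leading $(2/t)\tilde g_t$ cancels with the $-2 t^{-3}\tilde g_t$ coming from differentiating $t^{-2}$. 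Since $\int_{R_0}^{\infty} t^{-3}\, dt < \infty$, the family $\{h_t\}$ is $C^{0}$-Cauchy and converges uniformly to a continuous metric $g_{\infty}$ on $\Sigma_{R_0}$.

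Next, to upgrade to $C^{1,\alpha}$, I would invoke Theorem \ref{thm-expander with quadratic decay}. In the rescaled ambient metric $t^{-2} g$, the Riemann curvature on a fixed annular region around $\Sigma_{R_0}$ is bounded by $\BigO(\log t)$ (or $\BigO(1)$ when $n = 4$), while the $C^{0}$-asymptotic metric produced in the proof of Theorem \ref{thm-expander with quadratic decay} yields uniform lower bounds on volumes of unit balls in the rescaled picture. Combined with the elliptic structure of the soliton equation in harmonic coordinates and Anderson-type harmonic radius estimates, this gives uniform $C^{1,\alpha}$ bounds on $h_t$; together with the $C^{0}$-convergence of the previous step, one concludes $h_t \to g_{\infty}$ in $C^{1,\alpha}$ for every $\alpha \in (0,1)$, and in particular $g_{\infty} \in C^{1,\alpha}(\Sigma_{R_0})$.

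Finally, the conical convergence follows from Corollary \ref{cl-topology of ends on expander}. Using the diffeomorphism $(x, s) \mapsto \varphi_{s-R_0}(x)$ of $\Sigma_{R_0} \times [R_0, \infty)$ onto an end of $\M$, the ambient metric takes the form $|\nabla z|^{-2} ds^{2} + g|_{\Sigma_s}$. Under the parabolic rescaling $s = t\sigma$ with $\sigma$ fixed and $t \to \infty$, the first term converges to $d\sigma^{2}$ (since $|\nabla z| \to 1$) and the second to $\sigma^{2} g_{\infty}$ by the previous step, yielding $C^{1,\alpha}_{\text{loc}}$ convergence on $C(\Sigma_{R_0}) \setminus \{o\}$. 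Pointed Gromov-Hausdorff convergence then follows from this $C^{1,\alpha}_{\text{loc}}$ convergence together with a distance comparison near the apex. The main obstacle is the $C^{1,\alpha}$ step: the $C^{0}$-Cauchy estimate is soft, but the $C^{1,\alpha}$ upgrade requires controlling the logarithmically growing rescaled curvature at the harmonic radius scale, for which both the stronger curvature bound of Theorem \ref{thm-expander with quadratic decay} (not merely the hypothesis on $\Ric$) and the $C^{0}$-asymptotic cone are essential.
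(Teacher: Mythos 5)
Your Step 1 is essentially the paper's own argument (Proposition \ref{prop-vol lower bound of expander}): the radial derivative of the rescaled slice metric is integrable in $t$, giving a $\mathcal{C}^{0}$-limit $g_{\infty}$ and the $\mathcal{C}^{0}$ cone structure. The genuine gap is in your Step 2, and you flag it yourself without resolving it. Anderson-type $\mathcal{C}^{1,\alpha}$ harmonic-radius estimates take as input a \emph{Ricci} bound together with volume ratios that are $\epsilon(n)$-close to Euclidean; they make no use of the full curvature tensor, so the logarithmically growing rescaled bound $|{\Rm}|(t^{-2}g)=\BigO(\log t)$ is neither usable (it is unbounded as $t\to\infty$ for $n\ge5$) nor needed. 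What your proposal never produces is, first, a uniform bound on the \emph{intrinsic} Ricci curvature of the $(n-1)$-dimensional rescaled slices $(\Sigma_t,t^{-2}g|_{\Sigma_t})$: the ambient hypothesis (\ref{eq-quadratic decay of Ric}) does not control this directly, and the paper obtains it (Claim \ref{cl2-Almost Einstein}, via Corollary \ref{cl-topology of ends on expander}) from the Gauss equation, where the normal term $\Rm(g)(\cdot,\n,\n,\cdot)$ is handled through the soliton Bianchi identity (\ref{nable-Ric-expander}) together with local Shi estimates for $|{\nabla\Ric}|$ --- it is exactly here, through the uniform boundedness of $|{\Rm}|$ needed to run Shi's estimate, that Theorem \ref{thm-expander with quadratic decay} enters, not through a rescaled curvature bound at the harmonic-radius scale. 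Second, Anderson's estimate needs volume ratios close to the Euclidean value, which is strictly stronger than the uniform lower bound $\vol_g(x,1)\geq v$ you invoke; with only bounded Ricci and a non-collapsing constant, no uniform $\mathcal{C}^{1,\alpha}$ harmonic radius is available (orbifold-type degeneration is not excluded). The paper supplies the almost-Euclidean ratios by fixing one smooth slice, where small balls are automatically volume-almost-Euclidean, and transporting this to all later slices via the $\mathcal{C}^{0}$-Cauchy estimate (\ref{eq-uniform convergence of metric}); for the cone part it uses Colding's volume convergence near points of $C(\Sigma_{R_0})\setminus\{o\}$ and transfers back through the $\mathcal{C}^{0}$-closeness. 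Your appeal to ``the elliptic structure of the soliton equation in harmonic coordinates'' concerns the ambient metric and does not bear on the intrinsic regularity of the slice metrics, which is the first assertion of the corollary.

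For Step 3, note that the ambient rescaled Ricci curvature is bounded on annuli directly from (\ref{eq-quadratic decay of Ric}), so the paper simply applies Anderson's estimate to the ambient $n$-dimensional rescaled metrics (again with Colding-type volume input), with no logarithm ever appearing. Your alternative --- writing $g=|{\nabla z}|^{-2}dz^{2}+g|_{\Sigma_z}$ and passing to the limit --- would additionally require H\"older control of the radial derivatives of the slice metrics (mixed derivatives), which slicewise $\mathcal{C}^{1,\alpha}$ convergence plus $\mathcal{C}^{0}$ control in the radial variable does not give; as written this step also remains incomplete.
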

\begin{rmk}
    When $\eta=0$ in (\ref{eq-quadratic decay of Ric}), the smoothly conical structure at infinity of expander was also studied by Deruelle \cite{D17} (see also \cite{C12}).
\end{rmk}
\begin{rmk}\label{rmk-two notions of asymptotic cone} Without additional assumptions on the Ricci curvature of $g(t)$, it is currently unclear whether the weak asymptotic cone in the sense of \cite[p.5]{CL25} exists for the canonical flow generated by the expander. Despite that, if we choose $p\in\M$ such that $\nabla f(p)=0$, then the pointed Riemannian manifold $(\M,g(t),p)$ is isometric to $(\M,t^{-1}g,p)$ under the one-parameter family of diffeomorphisms $\frac{d\phi_{t}}{dt}=\frac{\nabla^{g}f}{t}$ with $\phi_{1}=\text{id}$. Therefore, as a consequence of Corollary \ref{cl-structure of limit of fiber}, 
    \be
    (\M,d_{g(t)},p)\overset{\text{pGH}}{\longrightarrow}(C(\Sigma_{R_{0}},g_{\infty}),o),
    \ee
    as $t\rightarrow0$ (see also \cite[Remark 1.5]{CD15}). However, without further assumption, it is not to be expected that there exists a metric $d$ on $\M$ such that $(\M,d)\simeq(C(X),d_{c})$ isometrically, or even at the topological level. 
\end{rmk}
\begin{rmk}
    In general, the number of ends could be greater than one for expanding solitons. Despite that, if $\scal\geq-\frac{n-1}{2}$ on $\M$, then by \cite{MW12}, either $\Sigma_{R_{0}}$ is connected or $\M\simeq\R\times\mathcal{N}^{n-1}$ for some compact Einstein manifold $\N^{n-1}$.
\end{rmk}

Very recently, Wu \cite{Wu25} has generalized the Munteanu-Wang result \cite{MW10} to the case of steady soliton. More precisely, Wu showed that for a steady gradient Ricci soliton with positive Ricci curvature and scalar curvature achieving maximum somewhere, the curvature tensor $\Rm$ grows at most exponentially in $r$, i.e. $|{\Rm}|(x)=O(e^{ar(x)})$ for some positive constant $a$. Motivated by this, as well as the corresponding estimate in shrinking and expanding cases, we are interested in studying the curvature estimates under a natural Ricci curvature decay condition, namely the linear Ricci curvature decay. Assuming the properness of the potential function $f$ and the linear Ricci curvature decay, we prove the following by a similar method as in Theorem \ref{thm-expander with quadratic decay}:
\begin{thm}\label{thm-steady with linear decay}
    Let $(\M,g,f)$ be an $n$-dimensional complete non-Ricci flat gradient steady Ricci soliton with $n\ge 5$. Suppose that $f$ is proper and the Ricci curvature decays linearly, i.e. $|{\Ric}|(x)\leq C_0/(r(x)+1)$ for some constant $C_0$ on $M$. Then the curvature tensor grows at most polynomially. That is, there exist two constants $C,N>0$ such that
    \be
    |{\Rm}|(x)\leq C(r(x)+1)^{N},
    \ee
    for all $x\in\M$, where $r(x)$ is the distance function to some fixes point $x_{0}\in\M$.
\end{thm}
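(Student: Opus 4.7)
The plan is to adapt the strategy of Theorem \ref{thm-expander with quadratic decay} to the steady setting, combining a $C^{0}$-asymptotic analysis of the level sets to get local volume non-collapsing, a weighted Bochner-type integral estimate for $\Rm$ whose right-hand side grows polynomially in $r$, and a Moser iteration to pass from $L^{2}$ to $L^{\infty}$.

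First I would fix the geometric framework. The steady equation and the identity $\scal+|\nabla f|^{2}=c_{0}$, together with the linear Ricci decay $|\Ric|\leq C_{0}/(r+1)$, force $0\leq\scal\leq C/(r+1)$, so $|\nabla f|^{2}\to c_{0}$ at infinity. Since the soliton is not Ricci flat we may normalize $c_{0}=1$, giving $\tfrac{1}{2}\leq|\nabla f|\leq 1$ outside a compact set. Properness of $f$ and integration along integral curves of $\nabla f$ then imply $|f|\asymp r$, the level sets $\Sigma_{R}=\{-f=R\}$ are compact for $R\gg 1$, and the level-set flow $\varphi_{\tau}$ generated by $\nabla(-f)/|\nabla(-f)|^{2}$ is well-defined near infinity. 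A lower bound $\vol(B(x,1))\geq v_{0}>0$ for $r(x)\gg 1$ then follows by mirroring the $C^{0}$-metric argument from Theorem \ref{thm-expander with quadratic decay}: under linear Ricci decay, the induced metric on $\Sigma_{R}$ distorts in a controlled way under $\varphi_{\tau}$, producing an asymptotic $C^{0}$ limit (cylinder-like rather than cone-like).

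Next I would derive the key integral estimate. Starting from the gradient steady soliton identity
\begin{equation*}
R_{ijk}{}^{l}\nabla_{l}f=\nabla_{i}\Ric_{jk}-\nabla_{j}\Ric_{ik},
\end{equation*}
the lower bound on $|\nabla f|$ controls one component of $\Rm$ pointwise by $|\nabla\Ric|$. A weighted Bochner identity for $|\Ric|^{2}$ with respect to $\Delta_{f}$ and integration by parts against a cutoff on $B(x,3)$ (the factor $e^{-f}$ is comparable to a constant on a unit ball since $|\nabla f|\leq 1$) give $\int_{B(x,2)}|\nabla\Ric|^{2}\leq C\int_{B(x,3)}|\Ric|^{2}\leq Cr(x)^{n-2}$, using the linear decay and the volume bound. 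Combined with the evolution $\Delta_{f}\Rm=-\Rm\ast\Rm$ coming from the canonical Ricci flow $g(t)=\phi_{t}^{*}g$, and a bootstrap using Shi-type estimates, this is promoted to $\int_{B(x,1)}|\Rm|^{2}\leq Cr(x)^{M}$ for some $M$. Finally, the Bochner inequality $\Delta_{f}|\Rm|^{2}\geq 2|\nabla\Rm|^{2}-C|\Rm|^{3}$ together with bounded drift $|\nabla f|\leq 1$, local non-collapsing, and the resulting Sobolev inequality support a De Giorgi--Nash--Moser iteration yielding $\sup_{B(x,1/2)}|\Rm|^{2}\leq C\int_{B(x,1)}|\Rm|^{2}\leq Cr(x)^{M}$; taking square roots gives the claimed polynomial bound.

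The main obstacle will be the non-collapsing. Without nonnegative Ricci or an a priori curvature bound, the $C^{0}$-metric argument of the expander case must be carefully adapted to the steady geometry, whose asymptotic model is cylindrical and whose natural rescaling along level sets is trivial rather than quadratic. A secondary technical issue is the cubic nonlinearity in the Moser iteration, which requires the $L^{p}$ control from the integral estimate as input; the fact that this input is merely polynomially bounded in $r$ (rather than bounded) is exactly what forces the conclusion to be polynomial growth of $|\Rm|$ rather than boundedness, the latter being recovered only under the stronger super-linear Ricci decay treated elsewhere in the paper.
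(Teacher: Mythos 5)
Your overall skeleton (a curvature integral estimate with polynomially growing right-hand side, a volume lower bound for unit balls obtained from the level-set flow of $f$, and a Moser iteration) matches the paper's strategy, but the central step is not actually established in your proposal. You claim that the identity $R_{ijk}{}^{l}\nabla_{l}f=\nabla_{i}\Ric_{jk}-\nabla_{j}\Ric_{ik}$ plus a local Bochner estimate for $|\Ric|^{2}$ and ``a bootstrap using Shi-type estimates'' promotes $\int_{B(x,2)}|\nabla\Ric|^{2}$ bounds to $\int_{B(x,1)}|\Rm|^{2}\leq Cr(x)^{M}$. This does not follow: the soliton/Bianchi identity only controls the components $\Rm(\cdot,\cdot,\cdot,\nabla f)$, and the pointwise bound $|\Rm|\leq C(|\Ric|+|\nabla\Ric|/|\nabla f|)$ that would close the argument is special to dimension $4$ (Munteanu--Wang, and it is exactly how the paper treats $n=4$ in the expanding case); for $n\geq 5$ no such pointwise recovery of $\Rm$ from $\Ric$ and $\nabla\Ric$ exists. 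Moreover your local estimate $\int_{B(x,2)}|\nabla\Ric|^{2}\leq C\int_{B(x,3)}|\Ric|^{2}$ silently drops the $|\Rm||\Ric|^{2}$ term in the Bochner inequality \eqref{Laplacian-Ric}, which cannot be discarded without the very curvature bound you are trying to prove, and Shi-type estimates likewise presuppose curvature bounds, so they cannot supply the missing input. The paper's Section \ref{sec-integral est} is precisely where this difficulty is resolved: one tests the identity $\scal=-\Delta F$ against $|\Rm|^{p}F^{-a}\phi^{q}$, integrates by parts twice using the second Bianchi identity and \eqref{nable-Ric} to convert divergence-of-curvature terms into $\nabla\Ric$ terms, and controls the resulting mixed integrals $\int|\nabla\Ric|^{2}|\Rm|^{p-1}F^{2-a}\phi^{q}$ and $\int|\nabla\Rm|^{2}|\Rm|^{p-3}F^{-a}\phi^{q}$ (Propositions \ref{int nabla Ric}, \ref{int nabla Rm}), with a delicate balance of constants ($\tfrac12 a-c(\delta p)^{3}-c\delta p^{5}-c>0$) and the polynomial volume growth of \cite{CMZ22}. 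Nothing in your proposal replaces this mechanism.

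Two further points. First, your claimed uniform non-collapsing $\vol(B(x,1))\geq v_{0}>0$ via an asymptotic $\mathcal{C}^{0}$ limit of the level sets does not hold under merely linear Ricci decay: the distortion estimate is $|\partial_{z}g|_{\Sigma_{R}}|\leq Cz^{-1}g|_{\Sigma_{R}}$, which is not integrable, so the pulled-back metrics need only satisfy two-sided polynomial bounds $(s/t)^{C}\varphi_{s-R_{0}}^{*}g|_{\Sigma_{s}}\leq\varphi_{t-R_{0}}^{*}g|_{\Sigma_{t}}\leq(t/s)^{C}\varphi_{s-R_{0}}^{*}g|_{\Sigma_{s}}$ rather than converge (convergence is exactly what the super-linear decay of Corollary \ref{cl-of-bdd-curvature}/\ref{cl-asymptotic limit of super-linear decay} buys). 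The correct and sufficient statement is the paper's Lemma \ref{unit volume for steady soliton}, $\vol(x,1)\geq c(1+r(x))^{-N}$, a polynomially decaying bound. Second, the final Moser iteration from an $L^{2}$ bound alone, $\sup_{B(x,1/2)}|\Rm|^{2}\leq C\int_{B(x,1)}|\Rm|^{2}$, is not valid for $n\geq 5$ because of the cubic nonlinearity: the potential $|\Rm|$ must be controlled in $L^{p}$ with $p>n/2$ on unit balls (see the prefactor in \eqref{Moser-iteration}), which is another reason the $L^{p}$ integral estimate of Theorem \ref{int-estimate} for large $p$ is indispensable.
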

\vskip-0.1cm
The properness condition $f\sim -r$ plays an essential role in our analysis, but it is unclear to us whether the result would still hold without it. On the one hand, the linear decay of the Ricci curvature is a generic condition that appears in many examples of steady solitons, such as the Bryant soliton and Appleton’s soliton \cite{A17, B05}, to name but a few.
\vskip0.2cm
It is natural to ask what the optimal bound for $|{\Rm}|$ should be if one assumes only conditions on the Ricci curvature and the potential function $f$ (see also \cite{C20} for estimates under the injectivity radius condition). On the product of Hamilton’s cigar soliton with any compact, non-flat but Ricci-flat manifold, the Ricci curvature decays exponentially and $f$ is proper, yet $|{\Rm}|$ remains bounded and does not decay at infinity. Hence, the best one can expect is the boundedness of $|{\Rm}|$. Indeed, by assuming a slightly faster decay of the Ricci curvature, we are able to confirm this.
\begin{cl}\label{cl-of-bdd-curvature}
    Let $(\M,g,f)$ be an $n$-dimensional complete non-Ricci flat gradient steady Ricci soliton. Suppose that $f$ is proper and there is an $\varepsilon>0$ such that 
    \be\label{Ricfast}
    |{\Ric}|(x)=O\left(r(x)^{-(1+\varepsilon)}\right),
    \ee    
    as $x\rightarrow\infty$, where $r(x)$ is the distance function to some fixed point $x_{0}\in\M$. Then the curvature tensor $|{\Rm}|$ is bounded. If in addition $\Ric\ge 0$ (or more generally $|{\Ric}|\leq C_0 \scal$) outside a compact set, then $|{\Ric}|=O(e^{-r(x)})$.
\end{cl}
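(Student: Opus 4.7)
My plan is to refine the strategy of Theorem~\ref{thm-steady with linear decay}, exploiting the additional integrability provided by super-linear Ricci decay, and then to upgrade the resulting $|\Rm|$ bound to an exponential Ricci decay estimate via a barrier argument.

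For the boundedness of $|\Rm|$, I would start from the Bochner-type inequality
\[
\Delta_f |\Rm|^2 \;\geq\; 2|\nabla \Rm|^2 \,-\, c_1 |\Rm|^3,
\]
and apply $f$-weighted Moser iteration on unit balls centered at points $x$ with $r(x)\gg 1$. In the proof of Theorem~\ref{thm-steady with linear decay}, only polynomial growth of $|\Rm|$ is obtained because an auxiliary quantity of the form $\int_1^{r(x)} |\Ric|(s)\,ds$ grows logarithmically under linear Ricci decay. Under the stronger hypothesis $|\Ric|=O(r^{-1-\varepsilon})$, this integral is uniformly bounded as $r(x)\to\infty$, so the iteration closes up to yield a uniform bound on $|\Rm|$. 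The properness $f\sim -r$, combined with the soliton identity $|\nabla f|^2 = C_\infty - \scal$ bounding $|\nabla f|$, is used to align the iteration scales with geodesic annuli.

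For the exponential decay of $|\Ric|$, once $|\Rm|$ is bounded, Shi's local derivative estimates give uniform bounds on $|\nabla^k\Rm|$ for every $k\geq 0$. Under the hypothesis $|\Ric|\leq C_0\,\scal$ (which follows from $\Ric\geq 0$ since then $|\Ric|\leq \scal$), I would combine the evolution identity $\Delta_f \scal=-2|\Ric|^2$ with Hamilton's identity $\nabla\scal=2\Ric(\nabla f)$ and the conservation law $|\nabla f|^2+\scal\equiv C_\infty$. Since $\scal\to 0$, we have $|\nabla f|\to\sqrt{C_\infty}>0$, and $\nabla f$ is asymptotic to $-\sqrt{C_\infty}\,\nabla r$ at infinity. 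For suitably small $\alpha>0$, the barrier $\phi=Ae^{-\alpha r}$ becomes a supersolution of $\Delta_f u + c_2 u\leq 0$ in the annulus $\{r\gg 1\}$, where $c_2$ is the constant appearing in the Bochner inequality $\Delta_f|\Ric|^2\geq -c_2 |\Ric|^2$ (available since $|\Rm|$ is bounded). The maximum principle on the sublevel sets $\{f\geq -R\}$, with $R\to\infty$, then forces $|\Ric|^2\leq \phi$ outside a compact set. Combining with $|\Ric|\leq C_0\,\scal$ and adjusting $\alpha$ appropriately gives $|\Ric|=O(e^{-r})$.

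The main obstacle lies in the first step: without $\Ric\geq 0$, the uniform lower bound $\vol(B(x,1))\geq v_0$ needed to run Moser iteration is not automatic, so Cheeger--Naber cannot be invoked directly. I would follow Theorem~\ref{thm-steady with linear decay} and recover it through a $\mathcal{C}^0$-convergence argument analogous to the one used for Theorem~\ref{thm-expander with quadratic decay}: under super-linear Ricci decay, a suitable rescaling along the $\nabla f$-flow should converge to a continuous limit, which yields the required noncollapsing estimate at unit scale. Carrying out this convergence in the steady setting, where the natural rescaling comes from the $\nabla f$-flow rather than from a conical structure, is the most delicate technical point.
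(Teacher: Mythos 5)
There is a genuine gap in your first step. With the integral estimate actually available (Theorem \ref{int-estimate}), the weight exponent $a$ must satisfy $\tfrac12 a > c(\delta p)^3+c\delta p^5+c$, so $a$ is bounded below by a positive constant no matter how you choose $\delta$ and $p$; consequently the local input to Moser iteration is $\int_{B_1(x)}|{\Rm}|^p\leq C_p(1+r(x))^{a}$, never a uniformly bounded quantity, and the unit-ball volume lower bound (Lemma \ref{unit volume for steady soliton}) is only polynomially decaying. Iteration therefore always returns a bound of the form $|{\Rm}|\leq C(1+r)^{c/p}$ with $c>0$: taking $\delta=p^{-5}$ and $p\to\infty$ you can make the exponent arbitrarily small, i.e. $|{\Rm}|=\BigO(r^{\epsilon})$ for every $\epsilon>0$, but you cannot "close up" to a uniform bound this way, and your claimed mechanism (that under linear decay the loss is an integral $\int_1^{r}|{\Ric}|\sim\log r$, which becomes bounded under super-linear decay) is not how the polynomial loss arises. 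The paper's proof needs a second, independent step that your proposal omits entirely: an ODE estimate for $|{\Rm}|$ along the integral curves of $\nabla F/|\nabla F|^{2}$, using the Bianchi identity $\nabla_{j}\Rm_{ilkj}=\Rm_{ilkj}f_{j}$ together with Shi's local estimate for $|\nabla^{2}\Ric|$ (which is where the slow-growth bound $|{\Rm}|=\BigO(r^{\varepsilon/2})$ and the decay $|{\Ric}|=\BigO(r^{-1-\varepsilon})$ enter), yielding $\frac{d}{dt}|{\Rm}|(\gamma(t))\leq \tilde C t^{-1-\varepsilon/2}$; integrability in $t$ is what finally gives boundedness. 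Incidentally, your worry about a uniform lower bound for $\vol(x,1)$ is a non-issue here: the polynomially decaying bound of Lemma \ref{unit volume for steady soliton} suffices, and even a uniform noncollapsing bound would not rescue the iteration-only argument, since the weighted integral estimate still forces $a>0$.

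Your second step is also too optimistic as written. With $c_2$ the fixed constant coming from $\Delta_f|{\Ric}|^{2}\geq -c_2|{\Ric}|^{2}$ (bounded curvature), the barrier $Ae^{-\alpha r}$ is a supersolution of $\Delta_f u+c_2u\leq0$ only when $\alpha(1-\alpha)\gtrsim c_2$, which forces $c_2\leq 1/4$ and in any case caps the achievable rate strictly below $e^{-r}$; moreover the comparison principle for $\Delta_f+c_2$ with a positive zeroth-order term is not automatic. To reach the stated rate $|{\Ric}|=\BigO(e^{-r})$ one must exploit the quadratic structure $\Delta_f\scal=-2|{\Ric}|^{2}\geq-2C_0^{2}\scal^{2}$ together with $\scal\to0$, so that the effective potential vanishes at infinity; this is exactly what the paper does by invoking \cite[Proposition 1]{C19} and then using $|{\Ric}|\leq C_0\scal$.
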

\vskip 0.2cm
With the bounded curvature from Corollary \ref{cl-of-bdd-curvature}, we can discuss the asymptotic behavior at infinity. Denote $F:=-f$, the level set $\Sigma_{R}=\{F=R\}$, and the flow $\frac{d}{dt}\varphi_{t}=\frac{\nabla F}{|{\nabla F}|^{2}}$ on the region $|{\nabla F}|>0$. 
\begin{cl}\label{cl-asymptotic limit of super-linear decay}
    Under the same assumption as in Corollary \ref{cl-of-bdd-curvature}, there are a constant $R_{0}>0$ and a smooth and Ricci flat metric $g_{\infty}$ on $\Sigma_{R_{0}}$ such that
    \be
        \varphi_{R-R_{0}}^{*}g|_{\Sigma_{R}}\overset{\mathcal{C}^{1,\alpha}}{\longrightarrow}g_{\infty},
    \ee
    and
    \be
    \|{\Ric(g|_{\Sigma_{R}})}\|_{g|_{\Sigma_{R}}}\rightarrow0
    \ee
    as $R\rightarrow\infty$ for any $\alpha\in(0,1)$. Moreover, for any $p_{k}\rightarrow\infty$, the pointed manifolds $(\M,g,p_{k})$ converge to $(\R\times \Sigma_{R_{0}},dz^{2}+g_{\infty},(0,o))$ in the $\mathcal{C}^{\infty}_{\text{loc}}$-sense if $\varphi_{R_{0}-F(p_{k})}(p_{k})\rightarrow o$. Furthermore, if $\lim_{r\rightarrow\infty}|{\Rm}|=0$, then $(\Sigma_{R_{0}},g_{\infty})$ is a closed flat manifold. 
\end{cl}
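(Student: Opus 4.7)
The plan is (i) use the steady soliton identity to ensure $\nabla F$ is nonzero at infinity so that $\varphi$ is well-defined, (ii) integrate the Ricci decay along the flow $\varphi$ to get $C^{0}$-convergence of the pulled-back induced metric to a limit $g_{\infty}$, and (iii) extract a Cheeger--Gromov--Hamilton limit of $(\M, g, p_{k})$ and identify it as an $\R$-splitting via de Rham, thereby concluding smoothness and Ricci flatness of $g_{\infty}$. On a steady gradient Ricci soliton one has $\scal + |\nabla f|^{2} \equiv C_{*}$; the non-Ricci-flatness hypothesis combined with $\scal \to 0$ (from $|\Ric| \to 0$) forces $C_{*} > 0$ and $|\nabla F|^{2} \to C_{*}$ at infinity. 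Thus there exists $R_{0}$ so that $|\nabla F|$ is uniformly bounded above and below by positive constants on $\{F \geq R_{0}\}$, and $\varphi_{t} \colon \Sigma_{R_{0}} \to \Sigma_{R_{0}+t}$ is a smooth family of diffeomorphisms. By Corollary \ref{cl-of-bdd-curvature}, $|\Rm|$ is bounded; applying Shi's derivative estimates to the canonical (diffeomorphism-invariant) Ricci flow generated by the soliton then yields uniform bounds on $|\nabla^{k}\Rm|$ for every $k\ge 0$.

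Using $\Hess F = \Ric$ and $X = \nabla F / |\nabla F|^{2}$, a direct computation in tangent directions gives $(\mathcal{L}_{X} g)(U, V) = 2\Ric(U, V)/|\nabla F|^{2}$, and hence on $\Sigma_{R_0}$
\[
\partial_{R}\bigl(\varphi_{R-R_{0}}^{*}(g|_{\Sigma_{R}})\bigr) \;=\; 2\,\varphi_{R-R_{0}}^{*}\bigl(\Ric|_{T\Sigma_{R}}/|\nabla F|^{2}\bigr).
\]
Since $f$ is proper we have $r \sim F$ at infinity, and thus $|\Ric||_{\Sigma_{R}} = O(R^{-(1+\varepsilon)})$. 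Consequently the right-hand side is integrable in $R$, and a standard bootstrap yields uniform two-sided bounds on $\varphi_{R-R_{0}}^{*}(g|_{\Sigma_{R}})$ together with $C^{0}$-Cauchyness, producing a continuous limit metric $g_{\infty}$ on $\Sigma_{R_{0}}$.

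For the pointed convergence, the bounded $|\nabla^{k}\Rm|$ plus non-collapsing of unit balls at infinity (obtained by comparing them to a uniformly bounded tube around $\Sigma_{R_{0}}$ via $\varphi$ and using compactness of $\Sigma_{R_{0}}$) allow Cheeger--Gromov--Hamilton compactness to extract a $C^{\infty}_{\mathrm{loc}}$ pointed limit $(M_{\infty}, \bar{g}, p_{\infty})$ for any sequence $p_{k} \to \infty$. Since $|\Ric| \to 0$, $\bar{g}$ is Ricci flat, and the limit potential $f_{\infty}$ satisfies $|\nabla f_{\infty}|^{2} \equiv C_{*} > 0$ and $\Hess f_{\infty} = -\Ric = 0$; hence $\nabla f_{\infty}$ is a nontrivial parallel vector field, and de Rham's theorem splits $(M_{\infty}, \bar{g}) \cong (\R \times N, dz^{2} + h)$ with $(N, h)$ smooth and Ricci flat. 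Choosing the sequence so that $\varphi_{R_{0} - F(p_{k})}(p_{k}) \to o \in \Sigma_{R_{0}}$ pins down the base point and identifies $(N, h)$ isometrically with $(\Sigma_{R_{0}}, g_{\infty})$, upgrading the $C^{0}$-convergence of $\varphi_{R-R_{0}}^{*}(g|_{\Sigma_{R}})$ to $C^{1,\alpha}$ (indeed $C^{\infty}$) convergence and ensuring that $g_{\infty}$ is smooth and Ricci flat; closedness of $\Sigma_{R_{0}}$ is immediate from properness of $f$. Finally, if $|\Rm| \to 0$ at infinity, the ambient limit $\R \times N$ is flat, so $(\Sigma_{R_{0}}, g_{\infty})$ is a closed flat manifold.

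The main obstacle will be verifying uniform non-collapsing of $(\M, g, p_{k})$ at infinity (needed for $C^{\infty}$-Cheeger--Gromov--Hamilton compactness), and then carefully matching the intrinsic level-set pullback limit $g_{\infty}$ with the cross-section of the ambient pointed limit, so as to promote the $C^{0}$-convergence of $\varphi_{R-R_{0}}^{*}(g|_{\Sigma_{R}})$ to genuine $C^{1,\alpha}$/$C^{\infty}$ convergence and secure the smooth Riemannian structure on $\Sigma_{R_{0}}$.
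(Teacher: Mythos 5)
Your $\mathcal{C}^{0}$ step (integrating $\partial_{z}g|_{\Sigma_{R}}=2\Hess F/|\nabla F|^{2}=\BigO(R^{-1-\varepsilon})$), your use of Shi-type derivative estimates, the non-collapsing along the flow, and the ambient Cheeger--Gromov--Hamilton limit all match the paper's proof. The genuine gap is exactly the step you yourself defer: you propose to obtain the smoothness and Ricci flatness of $g_{\infty}$, the $\mathcal{C}^{1,\alpha}$-convergence of $\varphi_{R-R_{0}}^{*}g|_{\Sigma_{R}}$, and (implicitly) $\|\Ric(g|_{\Sigma_{R}})\|\to 0$ from a de Rham splitting of the ambient pointed limit via the limit potential $f_{\infty}$. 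But Hamilton limits are subsequential and come with non-canonical comparison diffeomorphisms; knowing that some pointed limit splits as $\R\times N$ with $(N,h)$ smooth Ricci flat, and that the fixed pullback family converges in $\mathcal{C}^{0}$ to $g_{\infty}$ with $(\Sigma_{R_{0}},g_{\infty})$ isometric as a metric space to $(N,h)$, does not by itself show that $g_{\infty}$ is smooth in the given smooth structure of $\Sigma_{R_{0}}$, nor that the fixed tensors $\varphi_{R-R_{0}}^{*}g|_{\Sigma_{R}}$ converge in $\mathcal{C}^{1,\alpha}$: a metric-space isometry onto a smooth manifold carries no a priori regularity when the source metric is only continuous, so some explicit gauge-fixing is unavoidable. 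The same missing mechanism blocks the claim $\|\Ric(g|_{\Sigma_{R}})\|_{g|_{\Sigma_{R}}}\to 0$, which is part of the statement and which your outline never derives directly.

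The paper closes precisely this gap intrinsically on the slices: the steady Bianchi identity \eqref{nable-Ric} gives $|\Rm(g)(\cdot,\n,\n,\cdot)|\leq 2|\nabla\Ric|/|\nabla F|$, local Shi estimates (using the bounded curvature from Corollary \ref{cl-of-bdd-curvature}) give $|\nabla\Ric|=\BigO(r^{-1-\varepsilon})$, and $h_{R}=\Hess F/|\nabla F|=\BigO(R^{-1-\varepsilon})$; via the Gauss equation this yields $\Ric(g|_{\Sigma_{R}})\to 0$ and uniformly bounded intrinsic $\Rm(g|_{\Sigma_{R}})$, hence a uniform injectivity radius on the slices, $\mathcal{C}^{1,\alpha}$-precompactness, and then Anderson's harmonic-coordinate argument upgrades the $\mathcal{C}^{0}$-limit to a $\mathcal{C}^{1,\alpha}$-limit that is smooth and Ricci flat; only afterwards is the ambient $\mathcal{C}^{\infty}_{\text{loc}}$-limit identified with $(\R\times\Sigma_{R_{0}},dz^{2}+g_{\infty})$ by comparing with the $\mathcal{C}^{0}$-identification $\Psi(x)=(F(x),\varphi_{F(x)-R_{0}}(x))$, which also settles uniqueness of the limit without your de Rham step. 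So your remaining pieces (limit potential with parallel nonzero gradient, splitting, basepoint normalization, flatness when $|\Rm|\to 0$) are fine, but the proposal as written lacks the Gauss-equation/Shi estimates or an equivalent gauge-fixed regularity argument, and without one the central claims of the corollary are not established.
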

\begin{rmk}
According to \cite{MW11}, a complete gradient steady soliton is either connected at infinity or isometric to $\R\times\N$ for some compact Ricci flat manifold $\N$. Therefore, $\Sigma_{R_{0}}$ is a closed and connected manifold.
\end{rmk}
The structure of this paper is as follows. In Section \ref{sec-preliminary}, we recall some basic properties of steady and expanding solitons. In Section \ref{sec-integral est}, we derive the main integral estimate of curvature in the steady soliton case. Finally, we prove our main results and corollaries in Section \ref{sec-proof of expander case} and Section \ref{sec-proof of steady case}.

\vskip0.2cm
{\it Acknowledgement}: The authors would like to thank Huai-Dong Cao, Chih-Wei Chen and Alix Deruelle for their interest in this work, and thank Man-Chun Lee for fruitful discussions. P.-Y. Chan is supported by the Yushan Young Fellow Program of the Ministry of Education (MOE) Taiwan (MOE-108-YSFMS-0004-012-P1), and by the NSTC grant 113-2115-M-007 -014 -MY2.

\section{Preliminaries}\label{sec-preliminary}
In this section, we recall some basic properties of steady and expanding solitons that we shall use in later sections.
\subsection{Steady solitons}
\begin{lm}\label{seqn}\cite{H95}
    Let $(\M,g,f)$ be a complete gradient steady Ricci soliton. Then
    \begin{align}
        &\scal+\Delta f=0;\\
        &\nabla_{i}\scal=2\Ric_{ij}\nabla_{j}f;\\
        &\label{normalized cond}\scal+|\nabla f|^{2}=C_{0},
    \end{align}
    for some constant $C_{0}$.
\end{lm}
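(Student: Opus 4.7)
The three identities are the classical Hamilton identities for steady gradient Ricci solitons, and the plan is to derive them directly from the defining equation $R_{ij}+\nabla_i\nabla_j f=0$ by tracing, by taking divergence, and by differentiating a scalar combination. None of the steps are deep; the proof is essentially bookkeeping with the soliton equation together with the contracted second Bianchi identity and the commutator formula for covariant derivatives.

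For the first identity, I would simply take the trace $g^{ij}(R_{ij}+\nabla_i\nabla_j f)=0$, which immediately produces $\scal+\Delta f=0$. For the second identity, the plan is to apply $\nabla^j$ to the soliton equation to get
\[
\nabla^j R_{ij}+\nabla^j\nabla_i\nabla_j f=0.
\]
On the first term I would invoke the contracted second Bianchi identity $\nabla^j R_{ij}=\tfrac12\nabla_i\scal$. On the second term, since $\Hess f$ is symmetric (equivalently $df$ is closed), one rewrites $\nabla^j\nabla_i\nabla_j f=\nabla^j\nabla_j\nabla_i f=\Delta(\nabla_i f)$, and then uses the Bochner commutator $\Delta(\nabla_i f)=\nabla_i\Delta f+R_{ij}\nabla^j f$. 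Substituting $\Delta f=-\scal$ from the first identity and solving for $\nabla_i\scal$ yields $\nabla_i\scal=2R_{ij}\nabla^j f$.

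For the third identity, the strategy is to show that $\scal+|\nabla f|^2$ has vanishing gradient. Differentiating gives
\[
\nabla_i\bigl(\scal+|\nabla f|^2\bigr)=\nabla_i\scal+2\nabla^j f\,\nabla_i\nabla_j f,
\]
and then the soliton equation substitutes $\nabla_i\nabla_j f=-R_{ij}$ in the second term, producing $\nabla_i\scal-2R_{ij}\nabla^j f$, which is zero by the second identity. Connectedness of $\M$ then supplies the constant $C_0$.

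The only place where one has to be careful is sign and convention in the commutator step of the second identity; everything else is algebraic manipulation of the soliton equation. Since the statement is quoted from Hamilton's original work, I would present the argument compactly along the lines above rather than repeating a full index calculation.
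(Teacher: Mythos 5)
Your proof is correct: it is the standard derivation of Hamilton's identities (trace the soliton equation, take divergence and use the contracted second Bianchi identity together with the commutation formula $\Delta\nabla_i f=\nabla_i\Delta f+R_{ij}\nabla^j f$, then show $\nabla_i(\scal+|\nabla f|^2)=0$ on the connected manifold). The paper does not prove this lemma but simply cites Hamilton, and your argument is exactly the classical one, so there is nothing to reconcile.
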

$(\M,g,f)$ is said to be normalized gradient steady soliton if $C_{0}=1$. 
\begin{rmk}
    As a result of Chen \cite{C09}, the scalar curvature $\scal\geq0$ for a complete steady soliton. Therefore, $C_{0}=0$ only if $f$ is constant, which implies $(\M,g)$ is Ricci-flat. Hence, for a non-Ricci flat complete steady soliton, upon scaling the metric, we may as well assume it is normalized such that $C_0=1$.  
\end{rmk}
The next proposition elaborates under the assumption in Theorem \ref{thm-steady with linear decay}, $F:=-f$ is almost linear up to some small error.
\begin{prop}\cite[Theorem 3.2, Corollary 3.4]{CMZ22}\label{F is approximately r}
    Let $(\M,g,f)$ be a complete normalized non-Ricci-flat gradient steady Ricci soliton with $n\geq4$. Suppose that $f$ is proper and the scalar curvature decays linearly, i.e. $\scal\leq\frac{C_{1}}{r+1}$ for some positive constant $C_1$. Then there is a constant $C>0$ such that
    \be\label{steadyf}
    r-C_{2}\log (r+1)-C_{2}\leq F:=-f\leq r+C_{2},
    \ee
    on $\M$, where $r(\cdot):=d_{g}(\cdot,p)$ is the distance function based at $p\in\M$. 
\end{prop}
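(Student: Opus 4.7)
The plan is to exploit the soliton normalization identity $|\nabla F|^{2}=1-\scal$, which follows from Lemma \ref{seqn} after setting $C_{0}=1$, together with Chen's nonnegativity $\scal\ge 0$ and the hypothesis $\scal\le C_{1}/(r+1)$. These combine to give the two-sided pinching
\[ 1-\tfrac{C_{1}}{r+1}\le |\nabla F|^{2}\le 1, \]
so $|\nabla F|$ is nearly unit length at infinity. The upper bound on $|\nabla F|$ produces $F\le r+C$ at once by integration along a minimizing geodesic, while the lower bound on $|\nabla F|$ will yield the logarithmically corrected lower bound on $F$ through a gradient-flow argument.

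First I would establish the upper bound. Fix $x\in\M$ and let $\sigma:[0,r(x)]\to\M$ be a unit-speed minimizing geodesic from $x_{0}$ to $x$. Since $|\nabla F|\le 1$,
\[ F(x)-F(x_{0})=\int_{0}^{r(x)}\langle\nabla F,\sigma'(s)\rangle\,ds\le \int_{0}^{r(x)}|\nabla F|\,ds\le r(x), \]
which yields $F(x)\le r(x)+C_{2}$ for $C_{2}$ depending on $F(x_{0})$.

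Next, for the lower bound, I would set up the flow of $X:=\nabla F/|\nabla F|$. Properness of $f$ makes $\{F\le F_{0}\}$ compact for every $F_{0}$, so the level set $\{F=F_{0}\}$ has bounded diameter (say $\le C_{3}$), and $r\to\infty$ if and only if $F\to\infty$. Choose $R_{0}$ so that $\scal\le 1/2$ on $\{r\ge R_{0}\}$, then $F_{0}$ large enough that $\{F\ge F_{0}\}\subset\{r\ge R_{0}\}$; on $\{F\ge F_{0}\}$, $|\nabla F|\ge 1/\sqrt{2}$, so $X$ is smooth and of unit length. For $x$ with $F(x)\ge F_{0}$, flow backwards along $-X$; since $\tfrac{d}{ds}F(\gamma(s))=-|\nabla F|\le -1/\sqrt{2}$, the trajectory $\gamma$ reaches $\{F=F_{0}\}$ at some $y$ after arclength
\[ T=\int_{F_{0}}^{F(x)}\frac{dF'}{|\nabla F|}. \]
Using $|\nabla F|\ge\sqrt{1-\scal}\ge 1-\scal$ (valid for $\scal\in[0,1]$) and the already-established upper bound $F\le r+C_{2}$ (so $r\ge F'-C_{2}$ at a point with $F$-value $F'$), I estimate $|\nabla F|^{-1}\le 1+2\scal\le 1+\tilde{C}/(F'+1)$ for $F'$ large. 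Integration gives
\[ T\le (F(x)-F_{0})+\tilde{C}\log(F(x)+1)+C, \]
and combining with the triangle inequality $r(x)\le r(y)+T\le C_{3}+T$ together with the substitution $\log(F(x)+1)\le\log(r(x)+C_{2}+1)$ produces
\[ F(x)\ge r(x)-C'\log(r(x)+1)-C', \]
as desired.

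The main technical subtlety is justifying the backward-flow argument: one must guarantee that $-X$ has no zeros on $\{F\ge F_{0}\}$ (so the flow is smooth and reaches $\{F=F_{0}\}$ in finite time), and that every $x$ with $F(x)\gg 1$ is connected to $\{F=F_{0}\}$ by such a trajectory. Properness of $f$ is decisive here, since it forces $r\to\infty$ along $\{F\to\infty\}$, hence $\scal\to 0$, hence $|\nabla F|$ is uniformly bounded away from zero on $\{F\ge F_{0}\}$. The logarithmic correction $\log(r+1)$ arises intrinsically from $\int dF'/(F'+1)$ and cannot be removed without strengthening the scalar decay hypothesis.
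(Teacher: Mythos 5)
The paper itself gives no proof of this proposition (it is quoted from \cite{CMZ22}), so there is no internal argument to compare against; judged on its own terms, your strategy is the natural one and almost all of it is correct. The upper bound $F\le r+C_{2}$ from $|\nabla F|^{2}=1-\scal\le 1$ (using Chen's $\scal\ge 0$ and the normalization), the backward flow of $\nabla F/|\nabla F|$ on the region where $\scal\le 1/2$, the arclength formula $T=\int_{F_{0}}^{F(x)}dF'/|\nabla F|$, and the quantitative steps $(1-\scal)^{-1}\le 1+2\scal$ together with re-inserting $F\le r+C_{2}$ to convert $\scal\le C_{1}/(r+1)$ into a bound in terms of the level $F'$ all hold, and they do produce exactly the logarithmic correction $\int \tilde{C}\,dF'/(F'+1)$ in the statement.

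There is, however, one genuine gap: you assert that properness of $f$ makes every sublevel set $\{F\le F_{0}\}$ compact, equivalently that $r\to\infty$ if and only if $F\to\infty$. Properness of $f$ only gives compactness of $\{|f|\le c\}$; it does not by itself exclude the possibility that $f\to+\infty$ (i.e.\ $F\to-\infty$) along some sequence going to spatial infinity. At such points the asserted global bound $F\ge r-C_{2}\log(r+1)-C_{2}$ would fail, and your flow argument only reaches points with $F(x)\ge F_{0}$, while your final step silently uses that the leftover region $\{F<F_{0}\}$ is bounded. So the sign of $f$ at infinity must be proved, not assumed; indeed the paper's Section 3 relies on this proposition precisely to conclude that $f$ is negative near infinity. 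The gap is fixable with tools the paper already cites: $\Delta f=-\scal\le 0$, so $f$ is superharmonic, and by \cite{MW11} a non-Ricci-flat steady soliton has exactly one end, so outside a compact set $f$ has a definite sign; if that sign were positive, then $f$ would be proper and bounded below, hence would attain an interior minimum, and the strong minimum principle would force $f$ to be constant and the soliton Ricci flat, contradicting the hypothesis. Hence $f\to-\infty$ at infinity, $\{F\le F_{0}\}$ is compact, and with this supplement (and enlarging $C_{2}$ to absorb the compact region $\{F<F_{0}\}$) your proof is complete.
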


To proceed, we recall some basic evolution equations of various curvature quantities.
\begin{lm}\cite[Lemma 2.4]{CC20}
Let $(\M,g,f)$ be an $n$-dimensional complete gradient steady Ricci soliton. Then
\begin{align}
    &\label{nable-Ric}\nabla_{j}\Rm_{ilkj}=\nabla_{i}\Ric_{kl}-\nabla_{l}\Ric_{ki}=\Rm_{ilkj}f_{j}\\
    &\label{Laplacian-Ric}\Delta|{\Ric}|^{2}\geq2|{\nabla\Ric}|^{2}+\langle\nabla f,\nabla|{\Ric}|^{2}\rangle-4|{\Rm}||{\Ric}|^{2}\\
    &\label{Laplacian-Rm}\Delta|{\Rm}|^{2}\geq2|{\nabla\Rm}|^{2}+\langle\nabla f,\nabla|{\Rm}|^{2}\rangle-C_{3}|{\Rm}|^{3},
\end{align}
here $C_3$ is a positive dimensional constant.
\end{lm}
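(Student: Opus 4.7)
All three identities are classical facts about gradient Ricci solitons, and the plan is to deduce them from the twice-contracted second Bianchi identity combined with the steady soliton equation $\Ric+\Hess f=0$. Item (1) is a Bianchi-plus-commutator identity for the Hessian of $f$. Items (2) and (3) are Bochner-type computations that use the fact that, on a steady soliton, the rough Laplacian of curvature tensors equals the $\nabla_{\nabla f}$-drift plus an algebraic curvature contraction (the Hamilton evolution term along the soliton's canonical Ricci flow).

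\textbf{Plan for (1).} The first equality is just the twice-contracted second Bianchi identity $\nabla^{j}\Rm_{ijkl}=\nabla_{k}\Ric_{il}-\nabla_{l}\Ric_{ik}$, after applying the pair-exchange symmetry $\Rm_{ilkj}=\Rm_{kjil}$ to match the index placement. For the second equality, substitute the soliton relation $\Ric_{kl}=-\nabla_{k}\nabla_{l}f$ to rewrite
\[
\nabla_{i}\Ric_{kl}-\nabla_{l}\Ric_{ki}=-\nabla_{i}\nabla_{k}\nabla_{l}f+\nabla_{l}\nabla_{k}\nabla_{i}f.
\]
Using the symmetry of the Hessian ($\nabla_{k}\nabla_{i}f=\nabla_{i}\nabla_{k}f$) to swap factors, the difference becomes a commutator of two covariant derivatives acting on $\nabla_{k}f$, and the Ricci commutation formula $[\nabla_{i},\nabla_{l}]\nabla_{k}f=-\Rm_{ilk}{}^{j}\nabla_{j}f$ collapses it into the single term $\Rm_{ilkj}f^{j}$.

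\textbf{Plan for (2) and (3).} Apply the Bochner identity $\Delta|T|^{2}=2|\nabla T|^{2}+2\langle T,\Delta T\rangle$ with $T=\Ric$ and $T=\Rm$. Computing $\Delta\Ric$ reduces to taking one more divergence in (1) and reshuffling via second Bianchi, which yields the soliton evolution
\[
\Delta\Ric_{ij}=\nabla_{\nabla f}\Ric_{ij}+2R_{ikjl}R^{kl}-2\Ric_{ik}\Ric_{j}{}^{k}.
\]
An analogous computation, this time differentiating the full Bianchi identity for $\Rm$ and using $\Ric=-\Hess f$, gives $\Delta\Rm_{ijkl}=\nabla_{\nabla f}\Rm_{ijkl}+(\Rm*\Rm)_{ijkl}$ with a specific quadratic contraction. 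Pairing with $\Ric$ (resp.\ $\Rm$) turns the drift term into $\langle\nabla f,\nabla|T|^{2}\rangle$; the quadratic contractions are estimated pointwise by Cauchy--Schwarz together with $|\Ric|\leq\sqrt{n}\,|\Rm|$, giving $|R_{ikjl}R^{kl}R^{ij}|\leq|\Rm||\Ric|^{2}$ and $|\Ric_{ik}\Ric_{j}{}^{k}R^{ij}|\leq|\Ric|^{3}\leq\sqrt{n}\,|\Rm||\Ric|^{2}$, hence the lower bound $-4|\Rm||\Ric|^{2}$ in (2) and, after analogous bookkeeping for the Riemann pairing, $-C_{3}|\Rm|^{3}$ in (3).

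\textbf{Main obstacle.} No essential new idea is needed, but the derivation of Hamilton's soliton formulas for $\Delta\Ric$ and $\Delta\Rm$ is index-heavy, and the only real care required is in the sign conventions for covariant-derivative commutators and in tracking which quadratic curvature contractions appear with which coefficients. Once these evolution identities are established, the Bochner step and the Cauchy--Schwarz pointwise estimates are immediate, and the stated constants (in particular $C_{3}$ being dimensional) fall out directly.
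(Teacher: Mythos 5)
The paper does not prove this lemma at all: it is quoted verbatim from \cite[Lemma 2.4]{CC20}, so the only comparison to make is with the standard derivation in that reference, which is indeed the route you take (contracted second Bianchi identity plus the soliton equation for \eqref{nable-Ric}, Bochner formula plus the soliton Laplacian identities for \eqref{Laplacian-Ric} and \eqref{Laplacian-Rm}). Your treatment of \eqref{nable-Ric} and of \eqref{Laplacian-Rm} is fine, granting the usual care with sign conventions.

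There is, however, a concrete error in your step for \eqref{Laplacian-Ric}: the identity you quote,
\[
\Delta\Ric_{ij}=\nabla_{\nabla f}\Ric_{ij}+2R_{ikjl}R^{kl}-2\Ric_{ik}\Ric_{j}{}^{k},
\]
is the right-hand side of Hamilton's Ricci flow evolution equation, not the soliton identity. On a gradient steady soliton the pullback structure gives $\partial_t \Ric=\mathcal{L}_{\nabla f}\Ric=\nabla_{\nabla f}\Ric-2\Ric_{ik}\Ric_{j}{}^{k}$, so the $\Ric\ast\Ric$ terms cancel and the correct identity (equivalently, obtained by taking one more divergence of \eqref{nable-Ric} and commuting derivatives) is $\Delta_{f}\Ric_{ij}=-2R_{ikjl}R_{kl}$ with no $\Ric^{2}$ term. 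This matters for the statement as written: with your formula the Bochner step produces $2\langle\Ric,\Delta_f\Ric\rangle\geq-4|{\Rm}||{\Ric}|^{2}-4\operatorname{tr}(\Ric^{3})$, and since $\operatorname{tr}(\Ric^{3})$ has no sign you can only absorb it via $|{\Ric}|^{3}\leq c(n)|{\Rm}||{\Ric}|^{2}$, which yields a dimensional constant strictly larger than $4$; your claim that the bound $-4|{\Rm}||{\Ric}|^{2}$ ``falls out'' therefore does not follow from your own displayed identity. Once you replace it by $\Delta_{f}\Ric_{ij}=-2R_{ikjl}R_{kl}$, Cauchy--Schwarz gives exactly the stated constant $4$, and the rest of your plan (including the $-C_{3}|{\Rm}|^{3}$ bound in \eqref{Laplacian-Rm} from $\Delta_f\Rm=\Rm\ast\Rm$) goes through as the standard argument.
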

Finally, we recall that under the same assumption as Theorem \ref{thm-steady with linear decay}, the volume of geodesic balls grow at most polynomially.
\begin{rmk}\label{rms}
    Under the same assumption of Theorem \ref{thm-steady with linear decay}, Chan-Ma-Zhang \cite{CMZ22} showed that the steady soliton is of polynomial volume growth. Hence by choosing larger $N$ and $C$ if necessary, we have for all large $r$
    \[
    \vol(p,r)\leq Cr^N.
    \]
\end{rmk}
\subsection{Expanding soliton}
Throughout this subsection, we denote $F:=-f$.
\begin{lm}\label{seqn-expander}\cite{H95,C09}
    Let $(\M,g,f)$ be a complete gradient expanding Ricci soliton. Then
    \begin{align}
        &\scal=\Delta F-\frac{n}{2};\\
        &\nabla_{i}\scal=-2\Ric_{ij}\nabla_{j}F;\\
        &\scal+|\nabla F|^{2}=F+C_{0};\\
        &\scal\geq-\frac{n}{2}.
    \end{align}
\end{lm}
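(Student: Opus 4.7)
The plan is to treat the four assertions in order. Identities (1)--(3) are algebraic consequences of the expander equation $\Ric+\Hess f=-\tfrac{1}{2}g$ (with $F=-f$), while the lower bound (4) is an analytic statement requiring a maximum-principle argument at infinity.

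For (1), I would trace the soliton equation to obtain $\scal+\Delta f=-n/2$ and then substitute $\Delta f=-\Delta F$. For (2), I would take the divergence $\nabla^{j}$ of the soliton equation and invoke the Ricci commutation identity
\begin{equation*}
\nabla^{j}\nabla_{i}\nabla_{j}f=\nabla_{i}\Delta f+R_{ij}\nabla^{j}f,
\end{equation*}
together with the contracted second Bianchi identity $\nabla^{j}R_{ij}=\tfrac{1}{2}\nabla_{i}\scal$; after using $\nabla_{i}\Delta f=-\nabla_{i}\scal$ from (1), the pieces cancel to give $\nabla_{i}\scal=2R_{ij}\nabla^{j}f=-2R_{ij}\nabla^{j}F$. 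For (3), I would differentiate $|\nabla f|^{2}$, apply the soliton equation to convert $\Hess f$ into $-\Ric-\tfrac{1}{2}g$, and combine with (2); the gradient of $\scal+|\nabla f|^{2}+f$ then vanishes, so this quantity is a constant, which rearranges to $\scal+|\nabla F|^{2}=F+C_{0}$.

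For (4), my plan is to first derive a drift-elliptic equation for $\scal$. Applying $\nabla^{i}$ to identity (2) and using the soliton equation a second time to eliminate $\Hess f$ yields
\begin{equation*}
\Delta\scal-\langle\nabla\scal,\nabla f\rangle+2|{\Ric}|^{2}+\scal=0.
\end{equation*}
Invoking the pointwise bound $|{\Ric}|^{2}\ge\scal^{2}/n$ and setting $u:=\scal+n/2$ converts this to the differential inequality
\begin{equation*}
\Delta u-\langle\nabla u,\nabla f\rangle+\tfrac{2}{n}u^{2}-u\le 0.
\end{equation*}

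The main obstacle is that $\M$ is noncompact, so a direct maximum principle does not apply. My plan is to invoke an Omori--Yau-type weak maximum principle for the drift Laplacian $\Delta-\nabla_{\nabla f}$. If $u$ dipped below zero, I would produce an almost-minimizing sequence $\{x_{k}\}$ along which $u(x_{k})\to\inf u<0$, $|\nabla u|(x_{k})\to 0$, and $\liminf_{k}\Delta u(x_{k})\ge 0$; evaluated at such points, the displayed inequality forces $\tfrac{2}{n}u^{2}-u\le 0$, hence $0\le u\le n/2$, contradicting $u(x_{k})<0$. The existence of such a sequence can be justified from the well-known quadratic-like lower bound on $F$ at infinity (obtained from (3) together with $\scal\ge -n/2$ bootstrapped via an iteration, or from a proper exhaustion argument using $F$ directly), along the lines pioneered by Chen for shrinkers and steadies and adapted to expanders by Zhang and Pigola--Rimoldi--Setti.
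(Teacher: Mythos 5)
The paper does not actually prove this lemma: identities (1)--(3) are quoted from Hamilton \cite{H95}, and the lower bound $\scal\ge-\frac{n}{2}$ is quoted from the literature (the introduction attributes it to \cite{PRS11,Z11}; the lemma cites \cite{C09}). Your derivations of (1)--(3) are correct and are exactly the standard computations: tracing the expander equation, taking its divergence together with the contracted Bianchi identity and the commutation formula, and integrating the resulting exact gradient identity. Likewise your drift equation $\Delta\scal-\langle\nabla\scal,\nabla f\rangle+2|{\Ric}|^{2}+\scal=0$ and the reduction to $\Delta_{f}u\le u-\frac{2}{n}u^{2}$ for $u=\scal+\frac{n}{2}$ are correct.

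The gap is in how you justify the maximum-principle step for (4). First, the weak maximum principle for $\Delta_{f}$, in the form you invoke (a sequence with $u(x_{k})\to\inf u<0$), presupposes $\inf u>-\infty$, i.e.\ that $\scal$ is bounded below --- which is not known a priori and is essentially what one is trying to prove; to handle a possibly unbounded $u$ you need the stronger ``a priori estimate'' version of the principle that exploits the superlinear term $\frac{2}{n}u^{2}$, or some equivalent device, and this is not addressed. Second, your proposed justification of the principle via the quadratic-like lower bound on $F$ is circular: that growth estimate (as in Proposition \ref{F is approximately r^2/4} or \cite{CD15}) is itself derived using a lower bound on $\scal$ (or on $\Ric$), and $F$ is not known to be proper without it; admitting that it is ``bootstrapped'' from $\scal\ge-\frac{n}{2}$ concedes the circularity. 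A non-circular route does exist --- \cite{PRS11} prove that the $f$-weak maximum principle holds on any complete gradient soliton using the identity $\Delta_{f}f=f+\mathrm{const}$ rather than properness of $F$, and the reference \cite{C09} actually cited by the paper proceeds quite differently, via a localized parabolic estimate along the canonical flow $g(t)=t\,\phi_{t}^{*}g$ giving $\scal(g(t))\ge-\frac{n}{2t}$ with no global principle and no boundedness assumptions --- but as written your argument for (4) does not close.
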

By adding a constant to $f$ if necessary, we may assume, without loss of generality, that $C_{0}=-\frac{n}{2}$.

Under the assumption in Theorem \ref{thm-expander with quadratic decay}, the function $F$ is almost the quarter of distance square and has finite asymptotic volume ratio. 
\begin{prop}\cite[Section 2]{CD15}\label{F is approximately r^2/4}
    Let $(\M,g,f)$ be a complete gradient expanding Ricci soliton with finite asymptotic Ricci curvature (\ref{eq-quadratic decay of Ric}). Then 
    \be\label{eq-expanding F like d2/4}
      \frac{r(x)^{2}}{4}-\frac{\pi\eta}{2}r(x)+F(p)-|{\nabla F(p)}|\leq F(x)\leq \left(\frac{r(x)}{2}+\sqrt{F(p)}\right)^{2},
    \ee
    on $\M$, where $r(\cdot):=d_{g}(\cdot,p)$ is the distance function based at $p\in\M$. Moreover, for any $x\in\M$
    \be
    \lim_{r\rightarrow\infty}\frac{\vol(x,r)}{r^{n}}<+\infty.
    \ee
\end{prop}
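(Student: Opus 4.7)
Both pointwise estimates on $F$ come from integrating Hamilton's identity $\Hess F = \Ric + \tfrac{1}{2}g$ along a minimizing geodesic from $p$, exploiting the soliton relations in Lemma \ref{seqn-expander} and the asymptotic Ricci bound \eqref{eq-quadratic decay of Ric}. The volume ratio follows from a Bishop--Gromov-type comparison aided by the integrable Ricci decay at infinity.

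\textbf{Upper bound.} From $\scal + |\nabla F|^2 = F - n/2$ together with the universal bound $\scal \ge -n/2$ (Lemma \ref{seqn-expander}), one gets $|\nabla F|^2 \le F$ wherever $F \ge 0$, hence $|\nabla\sqrt{F}\,|\le \tfrac{1}{2}$ on $\{F > 0\}$. Integrating this gradient bound along a unit-speed minimizing geodesic from $p$ to $x$ yields $\sqrt{F(x)} \le \sqrt{F(p)} + r(x)/2$, which squares to the claimed upper estimate.

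\textbf{Lower bound.} Along a unit-speed minimizing geodesic $\gamma:[0,r(x)]\to\M$ from $p$ to $x$, set $u(s) := F(\gamma(s))$. Then
\[
u''(s) \;=\; \Hess F(\gamma',\gamma')(s) \;=\; \Ric(\gamma',\gamma')(s) + \tfrac{1}{2} \;\ge\; \tfrac{1}{2} - |\Ric|(\gamma(s)).
\]
The hypothesis \eqref{eq-quadratic decay of Ric}, combined with the smooth bound for $|\Ric|$ near $p$, provides a dominating function of the form $\eta/(1+s^2)$ (up to constants absorbed into the boundary terms) for $|\Ric|(\gamma(s))$. Since $\int_0^\infty (1+\tau^2)^{-1} d\tau = \pi/2$, a first integration gives $u'(s) \ge u'(0) + s/2 - \pi\eta/2$, and a second integration using $u'(0) = \langle \nabla F(p),\gamma'(0)\rangle \ge -|\nabla F(p)|$ and $u(0) = F(p)$ produces the stated lower bound.

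\textbf{Volume ratio.} The Ricci decay yields $\Ric \ge -(\eta+\epsilon)r^{-2}g$ outside a compact set and $\Ric \ge -H g$ globally for some constant $H$; the pointwise bound has integrable negative part along rays. A Bishop--Gromov comparison in this integrable setting gives a uniform Euclidean volume bound $\vol(B(p,r)) \le Cr^n$ for all $r$, whence $\limsup_{r\to\infty} \vol(x,r)/r^n < \infty$.

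The main obstacle is obtaining the sharp constant $\pi\eta/2$ in the linear-in-$r$ term of the lower bound: one must bookkeep the integral $\int_0^{r(x)}|\Ric|(\gamma(\tau)) d\tau$ so that the far-field contribution produces exactly $\pi\eta/2$ in the limit while the near-basepoint contribution is cleanly absorbed into the initial data $F(p)$ and $|\nabla F(p)|$, rather than contaminating the bound with additional $r(x)|\nabla F(p)|$-type terms that a naive estimate would produce.
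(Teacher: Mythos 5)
Your upper bound is the standard argument (from $\scal\ge -\tfrac n2$ and $\scal+|\nabla F|^2=F-\tfrac n2$ one gets $|\nabla\sqrt F|\le\tfrac12$) and is fine. The lower bound, however, has a genuine gap exactly at the point you defer: the second integration of $u'(0)\ge -|\nabla F(p)|$ inevitably produces the term $-|\nabla F(p)|\,r(x)$, and no bookkeeping of the Ricci integral can convert it into the constant $-|\nabla F(p)|$, because the inequality as printed is actually false. Test it on the Gaussian expander $(\R^n,g_{\mathrm{eucl}})$ with $F=\tfrac{|y|^2}{4}+\tfrac n2$ (so $\eta=0$): for $p\ne 0$ and $x=p+v$ the printed bound reduces to $\langle p,v\rangle\ge -|p|$, which fails whenever $v$ points from $p$ toward the origin with $|v|>1$, whereas the version with $-|\nabla F(p)|\,r(x)$ reduces to Cauchy--Schwarz and holds. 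So what your double integration proves is the correct statement --- the one in \cite{CD15} that this paper actually invokes later (see Claim \ref{t-simeq-distance-square}, $F\ge\tfrac14 r^2-Cr-C$) --- and the displayed constant term should be treated as a misprint rather than a target. Relatedly, the dominating function $\eta/(1+s^2)$ is not available from the limsup hypothesis (\ref{eq-quadratic decay of Ric}): you only get $|\Ric|(\gamma(s))\le(\eta+\epsilon)s^{-2}$ for $s\ge R_\epsilon$ plus a bound $\Lambda_\epsilon$ on $[0,R_\epsilon]$, and the compact-part contribution $\Lambda_\epsilon R_\epsilon$ cannot be absorbed into $F(p)$ or $|\nabla F(p)|$; it again enters multiplied by $r(x)$, i.e.\ it can only be absorbed into the coefficient of the linear term (equivalently, the clean constant requires replacing $\eta$ by $\sup_{\M}(1+r^2)|\Ric|$ or restricting to $r(x)$ large).

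The volume statement is the second gap. Bishop--Gromov with the lower bound $\Ric\ge -C(1+r^2)^{-1}g$ does \emph{not} give $\vol(B(p,r))\le Cr^n$: quadratic decay is precisely the borderline case, since the relevant integrability condition for Euclidean-type comparison is $\int^\infty s\,\kappa(s)\,ds<\infty$ for $\Ric\ge-(n-1)\kappa(r)g$, which fails logarithmically for $\kappa(s)=Cs^{-2}$. Concretely, the Jacobi equation $h''=Cs^{-2}h$ has solutions growing like $s^{1+\delta(C)}$ with $\delta(C)>0$ (realized, e.g., by warped products with warping $r^{1+\delta}$), so the comparison only yields $\vol(B(p,r))\le Cr^{n+\delta}$. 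The finiteness of the asymptotic volume ratio here really uses the soliton structure --- e.g.\ integrating $\Delta F=\scal+\tfrac n2$ over the sublevel sets $\{F\le t\}$ and using the coarea formula together with $|\nabla F|^2=F-\scal-\tfrac n2$, then converting to geodesic balls via the two-sided bound on $F$ --- which is how \cite{CD15} proceeds; your comparison argument as written would fail, and it also does not address the existence of the limit $\lim_{r\to\infty}\vol(x,r)/r^n$ asserted in the statement (as opposed to a mere limsup bound).
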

We conclude this section by recalling some basic evolution equations for various curvature quantities on a Ricci expander.
\begin{lm}\cite[Lemma 2.4]{CL22}
Let $(\M,g,f)$ be an $n$-dimensional complete gradient expanding Ricci soliton. Then
\begin{align}
    &\label{nable-Ric-expander}\nabla_{j}\Rm_{ilkj}=\nabla_{i}\Ric_{kl}-\nabla_{l}\Ric_{ki}=\Rm_{ilkj}f_{j}\\
    &\label{Laplacian-Ric-expander}\Delta|{\Ric}|^{2}\geq2|{\nabla\Ric}|^{2}+\langle\nabla f,\nabla|{\Ric}|^{2}\rangle-2|{\Ric}|^{2}-4|{\Rm}||{\Ric}|^{2}\\
    &\label{Laplacian-Rm-expander}\Delta|{\Rm}|^{2}\geq2|{\nabla\Rm}|^{2}+\langle\nabla f,\nabla|{\Rm}|^{2}\rangle-2|{\Rm}|^{2}-C_{4}|{\Rm}|^{3},
\end{align}
\end{lm}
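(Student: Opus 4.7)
The plan is to derive all three identities from two ingredients: the twice-contracted second Bianchi identity together with the soliton equation $\Ric+\Hess f=-\tfrac12 g$, and the Hamilton-type evolution equations for $\Ric$ and $\Rm$ on a gradient soliton, which are Bochner-Lichnerowicz identities rewritten in terms of the drift Laplacian $\Delta_{f}:=\Delta-\langle\nabla f,\nabla\cdot\rangle$.

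First, for (\ref{nable-Ric-expander}), I would contract the second Bianchi identity on a pair of indices to obtain $\nabla^{j}\Rm_{ijkl}=\nabla_{k}\Ric_{il}-\nabla_{l}\Ric_{ik}$, and then use the pair-symmetry $\Rm_{abcd}=\Rm_{cdab}$ and relabel to get the first equality. For the second equality, differentiate the soliton equation $\Ric_{kl}+\nabla_{k}\nabla_{l}f=-\tfrac12 g_{kl}$ in the $i$-direction, antisymmetrize over $i$ and $l$, and apply the Ricci commutation formula $[\nabla_{i},\nabla_{l}]\nabla_{k}f=\Rm_{ilkm}\nabla^{m}f$; the Hessian piece contributes exactly $\Rm_{ilkj}f_{j}$, which is the claim.

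Next, for (\ref{Laplacian-Ric-expander}), I would start from the evolution identity
\[
\Delta_{f}\Ric_{ij}=\lambda\Ric_{ij}+2\Rm_{ikjl}\Ric^{kl}-2\Ric_{ik}\Ric^{k}{}_{j},
\]
valid on any gradient soliton (obtained by combining $\Delta_{L}\Ric=\Delta\Ric+2\Rm\ast\Ric-2\Ric\circ\Ric$ with the once-contracted Bianchi identity and the soliton equation). With $\lambda=-1$, pairing with $\Ric$ gives
\[
\Delta_{f}|{\Ric}|^{2}=2|{\nabla\Ric}|^{2}-2|{\Ric}|^{2}+4\langle\Rm,\Ric\otimes\Ric\rangle-4\tr(\Ric^{3}).
\]
Bounding the two cubic terms using Cauchy-Schwarz and the pointwise bound $|{\Ric}|\leq c_{n}|{\Rm}|$, then moving $\langle\nabla f,\nabla|{\Ric}|^{2}\rangle$ back to the left, gives the stated inequality (with dimensional constants absorbed into the coefficient in front of $|{\Rm}||{\Ric}|^{2}$). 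For (\ref{Laplacian-Rm-expander}), I would run the analogous computation starting from Hamilton's equation $\Delta_{f}\Rm=\lambda\Rm+\Rm\ast\Rm$, where the schematic quadratic term satisfies $|\Rm\ast\Rm|\leq c_{n}|{\Rm}|^{2}$. Pairing with $\Rm$ produces $\Delta_{f}|{\Rm}|^{2}\geq 2|{\nabla\Rm}|^{2}-2|{\Rm}|^{2}-C_{4}|{\Rm}|^{3}$, and transferring the drift term yields the claim.

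The main bookkeeping obstacle is the first step of the Bochner computation: one must pass correctly from the Lichnerowicz Laplacian to the rough Laplacian using the twice-contracted second Bianchi identity, and simultaneously substitute the soliton equation to convert $\nabla\nabla\scal$-type terms into $\lambda\Ric$ plus a drift term. A single misplaced sign at this stage produces the wrong $\lambda$-coefficient on the right-hand side, which is precisely the term that distinguishes the expander ($-2|{\Ric}|^{2}$, $-2|{\Rm}|^{2}$) from the steady case. Once that identity is set up correctly, the remaining inequalities are routine Cauchy-Schwarz estimates on cubic curvature terms.
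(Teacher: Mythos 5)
Your overall route is the standard one (the paper itself gives no proof of this lemma, it only cites \cite[Lemma 2.4]{CL22}): contracted second Bianchi plus commuting derivatives on $\Hess f$ for the first identity, then drift-Laplacian Bochner formulas for the two inequalities. The first identity and the $\Rm$-inequality are fine. However, the key intermediate identity you wrote for the Ricci tensor is wrong: on a gradient soliton with $\Ric+\Hess f=\frac{\lambda}{2}g$ the correct statement is
\begin{equation*}
\Delta_{f}\Ric_{ik}=\lambda\,\Ric_{ik}-2R_{ijkl}\Ric_{jl},
\end{equation*}
with \emph{no} $-2\Ric_{ik}\Ric^{k}{}_{j}$ term. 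That quadratic term belongs to the Lichnerowicz Laplacian/Ricci-flow evolution and is exactly cancelled when you convert $\partial_{t}\Ric$ into the Lie-derivative-plus-scaling term of the self-similar solution: $\mathcal{L}_{\nabla f}\Ric_{ij}=\nabla_{\nabla f}\Ric_{ij}+2\Ric_{ik}\big(\tfrac{\lambda}{2}g_{kj}-\Ric_{kj}\big)$ after substituting the soliton equation, and the $-2\Ric\circ\Ric$ pieces cancel. A quick consistency check exposes the error: tracing your formula gives $\Delta_{f}\scal=\lambda\scal$, contradicting the standard identity $\Delta_{f}\scal=\lambda\scal-2|{\Ric}|^{2}$, which follows directly from $\nabla\scal=2\Ric(\nabla f)$ and the soliton equation. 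So the slip is precisely of the type you flagged as the ``main bookkeeping obstacle,'' only in the quadratic term rather than the $\lambda$-coefficient.

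The consequence is not merely cosmetic for the second inequality: with the spurious $-2\operatorname{tr}(\Ric^{3})$ term you can only conclude $\Delta|{\Ric}|^{2}\geq 2|{\nabla\Ric}|^{2}+\langle\nabla f,\nabla|{\Ric}|^{2}\rangle-2|{\Ric}|^{2}-C(n)|{\Rm}||{\Ric}|^{2}$, since absorbing $|{\Ric}|^{3}\leq c_{n}|{\Rm}||{\Ric}|^{2}$ necessarily inflates the constant, whereas the lemma asserts the explicit coefficient $4$. With the corrected identity the computation closes cleanly: for $\lambda=-1$, $\Delta_{f}|{\Ric}|^{2}=2|{\nabla\Ric}|^{2}-2|{\Ric}|^{2}-4R_{ijkl}\Ric_{jl}\Ric_{ik}$, and Cauchy--Schwarz gives $|R_{ijkl}\Ric_{jl}\Ric_{ik}|\leq|{\Rm}||{\Ric}|^{2}$, which is exactly the stated $-4|{\Rm}||{\Ric}|^{2}$ (for the purposes of this paper the precise constant is irrelevant, but as a proof of the lemma as stated it matters). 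Your treatment of the curvature-operator inequality is fine as is, because there the identity $\Delta_{f}\Rm=\lambda\Rm+\Rm\ast\Rm$ you quote is correct and $C_{4}$ is an unspecified dimensional constant; just make sure the same Lichnerowicz-versus-drift bookkeeping is done consistently there, since that is where the $-2|{\Rm}|^{2}$ term (absent in the steady case) comes from.
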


\section{The integral estimate on steady soliton}\label{sec-integral est}
In this section, we assume $(\M,g,f)$ is a complete normalized gradient steady Ricci soliton and the following:
\begin{itemize}
    \item $f$ is proper and negative (after subtracting a sufficiently large constant from $f$, see \eqref{steadyf});
    \item $|{\Ric}|\leq R_{0}/(r(x)+1)$, where $r(x):=d_{g}(x,p)$ is the distance function based at $p$ and $R_0$ is a nonnegative constant. 
\end{itemize}
Motivated by \cite{MW10,CLX23, Wu25}, for $r\gg1$, we define
\be
D(r):=\{x\in\M:F(x)\leq r\}, 
\ee
and the cut-off function
\be
\phi(x):=\begin{cases}
    \frac{r-F(x)}{r},\text{ if }x\in D(r);\\
    0,\text{ if }x\notin D(r).
\end{cases}
\ee
Note that according to Proposition \ref{F is approximately r}, $D(r)$ is a compact set. Now by our hypothesis and \eqref{normalized cond}, there exist constants $\delta>0$ and $r_{0}>0$  such that 
\be\label{cond-of-r0}
    |{\Ric}|\leq \frac{\delta}{F}\text{ and }|\nabla F|^{2}\geq1/2\text{ outside }D(r_{0}).
\ee
In general, we could always choose $\delta\leq2R_{0}$. Let $p>1$ and $q:=2p+1$.
From now on, we adopt the following convention:
\begin{itemize}
    \item $C$: a constant that depends on $D(r_{0}),n,R_{0},p,\delta$ and $a$.
    \item $c$: a constant that depends on $n,R_{0}$.
\end{itemize}
Furthermore, unless otherwise specified, the values of these constants may vary from line to line.
\begin{prop}\label{int nabla Ric}
    Under the same assumptions and notations as above, for any $|a|>10$, if $p>|a|+N+1$ where $N$ is the polynomial degree of volume growth as in Remark \ref{rms}, then  
    \be
    \begin{aligned}
        \int_{\M}|{\nabla\Ric}|^{2}|{\Rm}|^{p-1}F^{a+2}\phi^{q}\leq&~ c\delta^{2}p^{2}\int_{\M}|{\nabla\Rm}|^{2}|{\Rm}|^{p-3}F^{a}\phi^{q}+c\delta^{2}\int_{\M}|{\Rm}|^{p}F^{a}\phi^{q}\\
        &~+c\delta^{2}\int_{\M}|{\Rm}|^{p-1}F^{a}\phi^{q}+C.
    \end{aligned}
    \ee
\end{prop}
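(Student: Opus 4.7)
The plan is to start from the Bochner-type inequality \eqref{Laplacian-Ric} for $|\Ric|^2$, rearranged as $2|\nabla\Ric|^2 \leq \Delta|\Ric|^2 - \langle\nabla f,\nabla|\Ric|^2\rangle + 4|\Rm||\Ric|^2$, multiply by the test function $\psi := |\Rm|^{p-1}F^{a+2}\phi^q$, and integrate over $\M$. Two integrations by parts (with no boundary contributions since $\supp\phi\Subset D(r)$ is compact) move the Laplacian and the drift off $|\Ric|^2$ onto $\psi$, yielding
\begin{equation*}
2\int_{\M}|\nabla\Ric|^2\psi \leq \int_{\M}|\Ric|^2\Delta\psi + \int_{\M}|\Ric|^2 \psi\,\Delta f + \int_{\M}|\Ric|^2\langle\nabla f,\nabla\psi\rangle + 4\int_{\M}|\Rm||\Ric|^2\psi.
\end{equation*}
Since $\Delta f=-\scal\leq 0$ on a steady soliton, the second term is non-positive and is dropped. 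The fourth term is bounded directly by the pointwise estimate $|\Ric|^2 F^{a+2}\leq \delta^2 F^a$ on $\M\setminus D(r_0)$, giving the contribution $c\delta^2\int|\Rm|^p F^a\phi^q$, with the remaining integral over the compact set $D(r_0)$ absorbed into $C$.

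For the first and third terms I would expand $\Delta\psi$ and $\nabla\psi$ via the product rule on $\psi=|\Rm|^{p-1}\cdot F^{a+2}\cdot\phi^q$. The key ingredients are: (i) the Bochner inequality \eqref{Laplacian-Rm} for $|\Rm|^2$ combined with Kato's inequality, which controls $\Delta|\Rm|^{p-1}$ from below and introduces $|\nabla\Rm|^2$ with prefactor of order $(p-1)^2\lesssim p^2$; (ii) the steady identities $\Delta F=\scal$, $|\Delta F|\leq c\delta/F$, and $|\nabla F|^2 = 1-\scal\leq 1$; (iii) the cut-off bounds $\nabla\phi = -\nabla F/r$, $|\nabla\phi|\leq 1/r$, and $\Delta\phi = -\scal/r$; together with the identity $F=r(1-\phi)$ on $\supp\phi$, which allows factors of $1/r$ to be rewritten as $(1-\phi)/F$. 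Throughout, the pointwise bound $|\Ric|^2 F^{a+2}\leq \delta^2 F^a$ reduces every weight $F^{a+2}$ to $\delta^2 F^a$ and produces the overall $\delta^2$ prefactor. Cross terms containing $|\nabla\Rm|$ are handled by Cauchy--Schwarz with absorbing parameter $\epsilon\sim 1/p$, yielding the main term $c\delta^2 p^2\int|\nabla\Rm|^2|\Rm|^{p-3}F^a\phi^q$; the remaining cross and second-derivative pieces fit into either $c\delta^2\int|\Rm|^p F^a\phi^q$ or $c\delta^2\int|\Rm|^{p-1}F^a\phi^q$.

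The main obstacle will be the careful bookkeeping of the cut-off exponent: differentiating $\phi^q$ produces $\phi^{q-1}\nabla\phi$ and $\phi^{q-2}|\nabla\phi|^2$, which are pointwise larger than $\phi^q$ and cannot be absorbed naively. The choice $q=2p+1$ is made precisely so that, after writing $|\nabla\phi|^2\leq (1-\phi)^2/F^2$ and applying Young's inequality that splits the mixed factors $\phi^{q-1}$ and $\phi^{q-2}(1-\phi)^2$ asymmetrically, the deficient powers are dominated by $\phi^q$ up to $p$-dependent constants that are absorbed into the coefficients of the three main terms on the RHS. Finally, all contributions from the compact set $D(r_0)$ -- where both the Ricci-decay bound and the lower bound $|\nabla F|^2\geq 1/2$ can fail -- are estimated by the $C^\infty$-norms of $\Rm,\nabla\Rm,\Ric$ on $\overline{D(r_0)}$ together with the polynomial volume growth $\vol(p,r)\leq Cr^N$ from Remark~\ref{rms}; the assumption $p>|a|+N+1$ keeps the $F^a$-weighted integrals finite during this reduction, so that these error contributions collapse into the single finite constant $C = C(D(r_0),n,R_0,p,\delta,a)$ appearing in the estimate.
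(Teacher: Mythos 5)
Your overall framework (multiplying \eqref{Laplacian-Ric} by $\psi=|\Rm|^{p-1}F^{a+2}\phi^{q}$, integrating, and disposing of the cut-off and compact-set errors via Young's inequality, $q=2p+1$, and the polynomial volume growth with $p>|a|+N+1$) is in the right spirit, and your treatment of the drift term by a second integration by parts and the sign $\Delta f=-\scal\le 0$ is legitimate. However, there is a genuine gap at the central step. After integrating the Laplacian term by parts \emph{twice}, you must produce an \emph{upper} bound for $\int_{\M}|{\Ric}|^{2}\Delta\psi$, and expanding $\Delta\psi$ by the product rule leaves the term $\int_{\M}|{\Ric}|^{2}\left(\Delta|{\Rm}|^{p-1}\right)F^{a+2}\phi^{q}$ with a nonnegative weight. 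For this you need an upper bound on $\Delta|{\Rm}|^{p-1}$, but the ingredient you cite, \eqref{Laplacian-Rm}, is a \emph{lower} bound ("controls $\Delta|\Rm|^{p-1}$ from below", as you write), which bounds this integral from the wrong side and cannot introduce the $|{\nabla\Rm}|^{2}$ term on the right-hand side of the Proposition. A reverse inequality $\Delta|{\Rm}|^{2}\le 2|{\nabla\Rm}|^{2}+\langle\nabla f,\nabla|{\Rm}|^{2}\rangle+C|{\Rm}|^{3}$ does hold (it follows from the full evolution identity for $\Rm$ on a steady soliton), but it is not what is stated in the paper or in your proposal, and if you use it the cubic term contributes $c\,p\,|{\Rm}|^{p}$ after multiplying by $\frac{p-1}{2}|{\Rm}|^{p-3}$, so you would obtain the coefficient $c\delta^{2}p$ rather than the stated $c\delta^{2}$ in front of $\int_{\M}|{\Rm}|^{p}F^{a}\phi^{q}$ (harmless for the eventual Theorem \ref{int-estimate}, but not literally the Proposition). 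There is also a secondary regularity point: a second integration by parts onto $|{\Rm}|^{p-1}$ requires care at zeros of $\Rm$ (e.g.\ replacing $|{\Rm}|$ by $(|{\Rm}|^{2}+\epsilon)^{1/2}$).

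The paper's proof avoids all of this by integrating by parts only \emph{once}: it writes $I=\int_{\M}(\Delta|{\Ric}|^{2})\psi=-\int_{\M}\langle\nabla|{\Ric}|^{2},\nabla\psi\rangle$, so that only first derivatives of $|{\Rm}|$, $F$, and $\phi$ ever appear; the $c\delta^{2}p^{2}\int|{\nabla\Rm}|^{2}|{\Rm}|^{p-3}F^{a}\phi^{q}$ term then arises from Kato's inequality and Cauchy--Schwarz applied to the cross term $\langle\nabla|{\Ric}|^{2},\nabla|{\Rm}|^{p-1}\rangle$, with $|{\Ric}|^{2}F^{a+2}\le\delta^{2}F^{a}$ supplying the $\delta^{2}$, and the drift term $II$ is handled directly by Cauchy--Schwarz without any integration by parts. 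I recommend you restructure your argument along these lines (stop after one integration by parts), after which your bookkeeping for the $\phi^{q-1}$, $\phi^{q-2}$ and $D(r_{0})$ errors goes through essentially as you describe.
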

\begin{proof}[Proof of Proposition \ref{int nabla Ric}]
    By (\ref{Laplacian-Ric}),
    \begin{equation}\label{eq-1-Prop2}
        \begin{aligned}
            2\int_{M}|{\nabla\Ric}|^{2}|{\Rm}|^{p-1}F^{a+2}\phi^{q}\leq&~\int_{\M}(\Delta|{\Ric}|^{2})|{\Rm}|^{p-1}F^{a+2}\phi^{q}\\
            &~+2\int_{\M}\langle\nabla F,\nabla|{\Ric}|^{2}\rangle|{\Rm}|^{p-1}F^{a+2}\phi^{q}\\
            &~+4\int_{\M}|{\Ric}|^{2}|{\Rm}|^{p}F^{2+a}\phi^{q}\\
            =:&~I+II+III.
        \end{aligned}
    \end{equation}
    For the last term, note that $|{\Ric}|\leq \delta/F$ on $D(r_{0})^{c}$, this implies
    \be\label{eq-2-Prop2}
    III\leq c\delta^{2}\int_{\M}|{\Rm}|^{p}F^{a}\phi^{q}+C.
    \ee
    For the second term, using the fact $|\nabla F|\leq1$, $|{\Ric}|\leq\delta/F$ outside $D(r_{0})$, and the Cauchy inequality, we conclude that
    \be\label{eq-3-Prop2}
    \begin{aligned}
    II\leq&~\frac{1}{4}\int_{M}|{\nabla\Ric}|^{2}|{\Rm}|^{p-1}F^{a+2}\phi^{q}+c\delta^{2}\int_{\M}|{\Rm}|^{p-1}F^{a}\phi^{q}+C.
    \end{aligned}
    \ee
    Lastly, we apply integration by parts and Kato's inequality to the first term, 
    \be
    \begin{aligned}\label{eq-4-Prop2}
        I=&~-\int_{\M}\langle\nabla|{\Ric}|^{2},\nabla|{\Rm}|^{p-1}\rangle F^{a+2}\phi^{q}\\
        &~-\int_{\M}\langle\nabla|{\Ric}|^{2},\nabla F^{a+2}\rangle |{\Rm}|^{p-1}\phi^{q}\\
        &~-\int_{\M}\langle\nabla|{\Ric}|^{2},\nabla\phi^{q}\rangle|{\Rm}|^{p-1} F^{a+2}\\
        \leq&~2(p-1)\int_{\M}|{\nabla\Ric}||{\nabla\Rm}||{\Ric}||{\Rm}|^{p-2}F^{a+2}\phi^{q}\\
        &~+2|a+2|\int_{\M}|{\nabla\Ric}||{\nabla F}||{\Ric}||{\Rm}|^{p-1}F^{a+1}\phi^{q}\\
        &~+\frac{(4p+2)}{r}\int_{\M}|{\nabla\Ric}||{\nabla F}||{\Ric}||{\Rm}|^{p-1}F^{a+2}\phi^{q-1}\\
        =:&~\mathfrak{A}+\mathfrak{B}+\mathfrak{C}.
    \end{aligned}
    \ee
    For the term $\mathfrak{A}$, similar to above, by Ricci bounds and the Cauchy inequality,
    \be\label{eq-5-Prop2}
    \mathfrak{A}\leq\frac{1}{4}\int_{\M}|{\nabla\Ric}|^{2}|{\Rm}|^{p-1}F^{a+2}\phi^{q}+c\delta^{2}p^{2}\int_{\M}|{\nabla\Rm}|^{2}|{\Rm}|^{p-3}F^{a}\phi^{q}+C.
    \ee
    For $\mathfrak{B}$, by $|\nabla F|\leq1$ and the Ricci bounds,  
    \be
    \begin{aligned}
        \mathfrak{B}\leq&~\frac{1}{4}\int_{\M}|{\nabla\Ric}|^{2}|{\Rm}|^{p-1}F^{a+2}\phi^{q}+c\delta^{2}a^{2}\int_{\M}|{\Rm}|^{p-1}F^{a-2}\phi^{q}+C\\
        \leq&~\frac{1}{4}\int_{\M}|{\nabla\Ric}|^{2}|{\Rm}|^{p-1}F^{a+2}\phi^{q}+\delta^{2}\int_{\M}|{\Rm}|^{p}F^{a}\phi^{q}+c\delta^{2}|a|^{2p}\int_{\M}F^{a-2p}\phi^{q}+C,
    \end{aligned}
    \ee
    where the last inequality follows from Young's inequality. By the choice of $p$ and the at most polynomial volume growth, $|a|^{2p}\int_{\M}F^{a-2p}\phi^{q}$ is bounded by a constant independent of $r$. Consequently, 
    \be\label{eq-6-Prop2}
    \mathfrak{B}\leq\frac{1}{4}\int_{\M}|{\nabla\Ric}|^{2}|{\Rm}|^{p-1}F^{a+2}\phi^{q}+\delta^{2}\int_{\M}|{\Rm}|^{p}F^{a}\phi^{q}+C.
    \ee
    Lastly, since $F\leq r+C\leq Cr$ on $D(r)$, and by combining with $|\nabla F|\leq1$ and the Ricci bounds, we see that
    \be\label{eq-7-Prop2}
    \begin{aligned}
        \mathfrak{C}\leq&~C\delta p\int_{\M}|{\nabla\Ric}||{\Rm}|^{p-1}F^{a}\phi^{q-1}+C\\
        \leq&~\frac{1}{4}\int_{\M}|{\nabla\Ric}|^{2}|{\Rm}|^{p-1}F^{a+2}\phi^{q}+C\delta^{2}p^2\int_{\M}|{\Rm}|^{p-1}F^{a-2}\phi^{q-2}+C\\
        \leq&~\frac{1}{4}\int_{\M}|{\nabla\Ric}|^{2}|{\Rm}|^{p-1}F^{a+2}\phi^{q}+\delta^{2}\int_{\M}|{\Rm}|^{p}F^{a}\phi^{q}+C\delta^{2}p^{2p}\int_{\M}F^{a-2p}\phi^{q-2p}+C,
    \end{aligned}
    \ee
    we have just applied the Cauchy's inequality and Young's inequality in the last two lines. As above, $\int_{\M}F^{a-2p}\phi^{q-2p}$ is bounded from above due to the choice of $p$ and the at most polynomial volume growth. The result then follows by combining all the above estimates. 
\end{proof}
\begin{prop}\label{int nabla Rm}
Under the same conditions as Proposition in \ref{int nabla Ric}, and assuming $p\geq3$ further,
\be
\int_{\M}|{\nabla\Rm}|^{2}|{\Rm}|^{p-3}F^{a}\phi^{q}\leq c\int_{\M}|{\Rm}|^{p-1}F^{a}\phi^{q}+c\int_{\M}|{\Rm}|^{p}F^{a}\phi^{q}+C.
\ee
\end{prop}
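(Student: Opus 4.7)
The plan is to mirror the proof of Proposition~\ref{int nabla Ric}, feeding the Bochner-type inequality \eqref{Laplacian-Rm} for $|{\Rm}|^{2}$ (instead of \eqref{Laplacian-Ric}) against the weight $|{\Rm}|^{p-3}F^{a}\phi^{q}$. Multiplying \eqref{Laplacian-Rm} by this weight and integrating over $\M$ gives
\[
2\int_{\M}|{\nabla\Rm}|^{2}|{\Rm}|^{p-3}F^{a}\phi^{q}\leq\int_{\M}(\Delta|{\Rm}|^{2})|{\Rm}|^{p-3}F^{a}\phi^{q}-\int_{\M}\langle\nabla f,\nabla|{\Rm}|^{2}\rangle|{\Rm}|^{p-3}F^{a}\phi^{q}+C_{3}\int_{\M}|{\Rm}|^{p}F^{a}\phi^{q}.
\]
The $C_{3}$-term is already of the form demanded by the statement. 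For the drift term, I would apply $|\nabla f|\le 1$ (from the normalization \eqref{normalized cond}), Kato's inequality $|\nabla|{\Rm}||\le|{\nabla\Rm}|$, and Cauchy--Schwarz, arriving at $\tfrac{1}{4}\int|{\nabla\Rm}|^{2}|{\Rm}|^{p-3}F^{a}\phi^{q}+c\int|{\Rm}|^{p-1}F^{a}\phi^{q}$, which is exactly the first summand of the claim.

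The main work lies in the Laplacian term, which I would integrate by parts and then distribute the gradient across the three factors $|{\Rm}|^{p-3}$, $F^{a}$, and $\phi^{q}$. The piece from $\nabla|{\Rm}|^{p-3}$ equals $-2(p-3)\int|{\Rm}|^{p-3}|\nabla|{\Rm}||^{2}F^{a}\phi^{q}$, which is non-positive since $p\ge 3$ and can simply be discarded; this is the role of the assumption $p\ge 3$. For the piece coming from $\nabla F^{a}=aF^{a-1}\nabla F$, one uses $|\nabla F|\le 1$, Kato, and Cauchy--Schwarz to bound it by $\tfrac{1}{4}\int|{\nabla\Rm}|^{2}|{\Rm}|^{p-3}F^{a}\phi^{q}+ca^{2}\int|{\Rm}|^{p-1}F^{a-2}\phi^{q}$. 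The last integral is then split via Young's inequality with exponents $\tfrac{p}{p-1}$ and $p$, exactly as in step~$\mathfrak{B}$ of Proposition~\ref{int nabla Ric}, into the admissible term $\int|{\Rm}|^{p}F^{a}\phi^{q}$ plus a pure $F$-integral $\int F^{a-2p}\phi^{q}$; the latter is bounded independently of $r$ by the polynomial volume growth (Remark~\ref{rms}) together with the hypothesis $p>|a|+N+1$. The $\nabla\phi^{q}$ piece is treated identically, using $|\nabla\phi|\le 1/r$ and $F\le Cr$ on $D(r)$, and produces the same three admissible contributions as $\mathfrak{C}$ of Proposition~\ref{int nabla Ric}.

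Collecting everything, three copies of $\tfrac{1}{4}\int|{\nabla\Rm}|^{2}|{\Rm}|^{p-3}F^{a}\phi^{q}$ are absorbed into the factor $2$ on the left, leaving coefficient $\tfrac{5}{4}$; dividing yields the stated inequality. The only non-mechanical point, and what I expect to be the main technical obstacle, is the Young's-inequality book-keeping in the $\nabla F^{a}$ and $\nabla\phi^{q}$ pieces: the Young parameter must be tuned (roughly $\varepsilon\sim a^{-2}$) so that the $a$-dependence generated by differentiating $F^{a}$ is absorbed into the coefficient of the harmless $\int F^{a-2p}\phi^{q}$ term (which then disappears into $C$) rather than into the coefficient of $\int|{\Rm}|^{p}F^{a}\phi^{q}$, which must remain an $a$-independent constant $c$. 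This is a direct transcription of the trick already used in $\mathfrak{B}$ and $\mathfrak{C}$ of the previous proposition, so it is routine but delicate.
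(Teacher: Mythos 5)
Your argument coincides with the paper's proof in all essentials: the same Bochner inequality \eqref{Laplacian-Rm} tested against $|{\Rm}|^{p-3}F^{a}\phi^{q}$, discarding the non-positive $\nabla|{\Rm}|^{p-3}$ term via $p\geq 3$, Cauchy--Schwarz absorption of the gradient terms, and Young's inequality plus polynomial volume growth (with $p>|a|+N+1$, $q=2p+1$) to dispose of the pure $F$-integrals. The only differences are cosmetic (absorption constants $\tfrac14$ versus the paper's $\tfrac13$), so the proposal is correct and follows the paper's route.
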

\begin{proof}[Proof of Proposition \ref{int nabla Rm}]
    By (\ref{Laplacian-Rm}), we have
    \be
    \begin{aligned}
    2\int_{\M}|{\nabla\Rm}|^{2}|{\Rm}|^{p-3}F^{a}\phi^{q}\leq&~\int_{\M}(\Delta|{\Rm}|^{2})|{\Rm}|^{p-3}F^{a}\phi^{q}\\
    &~+\int_{\M}\langle\nabla F,\nabla|{\Rm}|^{2}\rangle|{\Rm}|^{p-3}F^{a}\phi^{q}\\
    &~+c\int_{\M}|{\Rm}|^{p}F^{a}\phi^{q}\\
    =:&~I+II+III.
    \end{aligned}
    \ee
    For the second term, we adopt the Cauchy inequality and $|{\nabla F}|\leq1$, 
    \be
    II\leq \frac{1}{3}\int_{\M}|{\nabla\Rm}|^{2}|{\Rm}|^{p-3}F^{a}\phi^{q}+3\int_{\M}|{\Rm}|^{p-1}F^{a}\phi^{q}.
    \ee
    After the integration by parts, $I$ becomes
    \be
    \begin{aligned}
        I=&~-(p-3)\int_{\M}\langle\nabla|{\Rm}|^{2},\nabla|{\Rm}|\rangle|{\Rm}|^{p-4}F^{a}\phi^{q}\\
        &~-a\int_{\M}\langle\nabla|{\Rm}|^{2},\nabla F\rangle|{\Rm}|^{p-3}F^{a-1}\phi^{q}\\
        &~+\frac{q}{r}\int_{\M}\langle\nabla|{\Rm}|^{2},\nabla F\rangle|{\Rm}|^{p-3}F^{a}\phi^{q-1}\\
        =:&~\mathfrak{A}+\mathfrak{B}+\mathfrak{C}.
    \end{aligned}
    \ee
    For $\mathfrak{A}$, notice that it is non-positive when $p\geq3$, we obtain $\mathfrak{A}\leq0$. For $\mathfrak{B}$, using the fact $|{\nabla F}|\leq1$ and Cauchy's inequality,
    \be
    \mathfrak{B}\leq \frac{1}{3}\int_{\M}|{\nabla\Rm}|^{2}|{\Rm}|^{p-3}F^{a}\phi^{q}+ca^{2}\int_{\M}|{\Rm}|^{p-1}F^{a-2}\phi^{q}.
    \ee
    Applying Young's inequality on the last term, 
    \be
    ca^{2}\int_{\M}|{\Rm}|^{p-1}F^{a-2}\phi^{q}\leq\int_{\M}|{\Rm}|^{p}F^{a}\phi^{q}+Ca^{2p}\int_{\M}F^{a-2p}\phi^{q}\leq\int_{\M}|{\Rm}|^{p}F^{a}\phi^{q}+C,
    \ee
    where the last term follows from the at most polynomial volume growth. For $\mathfrak{C}$, using the fact $F\leq r$ on $D(r)$ and $|\nabla F|\leq1$,
    \be
    \begin{aligned}
        \mathfrak{C}\leq&~ cp\int_{\M}|{\nabla\Rm}||{\Rm}|^{p-2}F^{a-1}\phi^{q-1}\\
        \leq&~\frac{1}{3}\int_{\M}|{\nabla\Rm}|^{2}|{\Rm}|^{p-3}F^{a}\phi^{q}+cp^{2}\int_{\M}|{\Rm}|^{p-1}F^{a-2}\phi^{q-2}\\
        \leq&~\frac{1}{3}\int_{\M}|{\nabla\Rm}|^{2}|{\Rm}|^{p-3}F^{a}\phi^{q}+\int_{\M}|{\Rm}|^{p}F^{a}\phi^{q}+Cp^{2p}\int_{\M}F^{a-2p}\phi^{q-2p},
    \end{aligned}
    \ee  
    we have just applied the Cauchy's inequality and Young's inequality in the last two inequalities, respectively. Since $p>|a|+N+1$ and $q=2p+1$, the last term is bounded from above by $C$ due to the polynomial volume growth of $\M$. Combining all the estimates above, we confirm the assertion.
\end{proof}
\vskip-0.1cm

With the above preparations in place, we are now ready to state the main integral estimate of this section.

\begin{thm}\label{int-estimate}
Let $(\M,g,f)$ be an $n$-dimensional complete normalized steady gradient Ricci soliton. Suppose $f$ is proper and the Ricci curvature has linear decay
\be\label{ricc}
|{\Ric}|(x)\leq\frac{R_{0}}{1+d_{g}(x,x_{0})},
\ee
for all $x\in\M$ and for some $x_{0}\in\M$, where $R_0$ is a positive a constant. Then for $p+a \geq N+1$, $a>10$ and $p\geq3$, there exist some constants $c(n,R_{0}),C_{1}(n,p,a,g)>0$ such that
\be
\left[\frac{1}{2}a-c(\delta p)^{3}-c\delta p^{5}-c\right]\int_{\M}|{\Rm}|^{p}F^{-a}d\vol_g\leq C_{1},
\ee
here $\delta$ is the positive constant in (\ref{cond-of-r0}) and $N$ is the polynomial degree of volume growth in Remark \ref{rms}.
\end{thm}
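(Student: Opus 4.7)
The plan is to combine a principal integration-by-parts identity, which extracts the quantity $\tfrac{a}{2}\int|{\Rm}|^p F^{-a}$ with a positive coefficient via the weight $F^{-a}$ meeting $\nabla F$, with the two auxiliary estimates already established in Propositions \ref{int nabla Ric} and \ref{int nabla Rm}, which control the $|{\nabla\Ric}|^2$ and $|{\nabla\Rm}|^2$ integrals by $|{\Rm}|^p$ integrals and absolute constants.

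I would begin from the divergence identity
$$\int_{\M}\diver\!\bigl(|{\Rm}|^p F^{-a+1}\phi^q\,\nabla F\bigr)\,d\vol_g=0,$$
valid because $\phi^q$ has compact support inside $D(r)$. Expanding with $\Delta F=\scal$ and $\nabla F^{-a+1}=(1-a)F^{-a}\nabla F$ gives, after using $|{\nabla F}|^2\geq 1/2$ outside $D(r_0)$ and $\scal\leq c\delta/F$, an inequality of the schematic form
$$\tfrac{a}{2}\int|{\Rm}|^p F^{-a}\phi^q\leq c\delta\int|{\Rm}|^p F^{-a}\phi^q+\int F^{-a+1}\langle\nabla F,\nabla|{\Rm}|^p\rangle\phi^q+C,$$
where $C$ absorbs contributions from $D(r_0)$ and from the $\nabla\phi^q$ boundary term.

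Next I would bound the cross-term by Cauchy--Schwarz:
$$p\int F^{-a+1}|{\Rm}|^{p-1}|{\nabla\Rm}|\phi^q\leq \tfrac{1}{4}\int|{\Rm}|^p F^{-a}\phi^q+c p^2\int|{\nabla\Rm}|^2|{\Rm}|^{p-2}F^{-a+2}\phi^q.$$
The remaining $|{\nabla\Rm}|^2$ integral is controlled by Proposition \ref{int nabla Rm} (with the substitution $a\mapsto -a$), producing $c\int|{\Rm}|^p F^{-a}\phi^q$ plus a lower-order $|{\Rm}|^{p-1}$ term and a constant. When the Bianchi--soliton identity $\Rm_{ilkj}f_j=\nabla_i\Ric_{kl}-\nabla_l\Ric_{ki}$ is invoked to re-express the $\nabla F$-component of $\Rm$ in terms of $\nabla\Ric$, Proposition \ref{int nabla Ric} supplies the necessary $c\delta^2 p^2$-bound, which when chained with Proposition \ref{int nabla Rm} produces the polynomial factors $(\delta p)^3$ and $\delta p^5$ appearing in the final coefficient. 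The lower-order $|{\Rm}|^{p-1}$ terms are absorbed via Young's inequality $|{\Rm}|^{p-1}\leq\eta|{\Rm}|^p+C(\eta)$, while the resulting $F^{-\alpha}$-integrals over $D(r)$ are uniformly bounded in $r$ by the polynomial volume growth of $\M$ from Remark \ref{rms}, which is exactly where the hypothesis $p+a\geq N+1$ enters.

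Collecting all terms and letting $r\to\infty$ by monotone convergence (with $\phi^q\to 1$ pointwise) yields the stated inequality. The main obstacle is the careful bookkeeping of polynomial factors in $p$ and $\delta$ through the nested applications of Cauchy--Schwarz, Young's inequality, and the two propositions; each absorption introduces new $p$- and $\delta$-dependent constants, and tracking them precisely to recover the announced form $\tfrac{a}{2}-c(\delta p)^3-c\delta p^5-c$ is the technical heart of the proof.
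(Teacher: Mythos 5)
Your opening step (the divergence identity with weight $F^{1-a}\phi^{q}$, the substitution $\Delta F=\scal$, the lower bound $|\nabla F|^{2}\ge 1/2$, and the Ricci decay to handle the scalar curvature term) is exactly the paper's starting point, and your closing steps (Young's inequality, polynomial volume growth to kill the $F$-power integrals, then $r\to\infty$) also match. The genuine gap is in your treatment of the dangerous cross-term $I=\int\langle\nabla|{\Rm}|^{p},\nabla F\rangle F^{1-a}\phi^{q}$. You bound it crudely by Cauchy--Schwarz, arriving at $cp^{2}\int|{\nabla\Rm}|^{2}|{\Rm}|^{p-2}F^{2-a}\phi^{q}$, and then claim that Proposition \ref{int nabla Rm} (with $a\mapsto -a$) turns this into $c\int|{\Rm}|^{p}F^{-a}\phi^{q}$. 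It does not: that proposition controls $\int|{\nabla\Rm}|^{2}|{\Rm}|^{p-3}F^{-a}\phi^{q}$, whereas your term carries the weight $F^{2-a}$, larger by the unbounded factor $F^{2}$, so at best you would obtain $cp^{2}\int|{\Rm}|^{p}F^{2-a}\phi^{q}$, which cannot be absorbed by $\frac{a}{2}\int|{\Rm}|^{p}F^{-a}\phi^{q}$. Moreover, this route produces a coefficient $cp^{2}$ with no factor of $\delta$ (the constant in Proposition \ref{int nabla Rm} is $c(n,R_{0})$, not small), so even setting aside the weight mismatch you could not recover the announced coefficient $\frac12 a-c(\delta p)^{3}-c\delta p^{5}-c$, in which every $p$-dependent error carries a power of $\delta$; this is essential, since the theorem is later applied with $a$ fixed, $\delta=p^{-5}$ and $p\to\infty$.

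The missing idea is that the Bianchi--soliton structure must be exploited \emph{before} any Cauchy--Schwarz, not after. The paper first writes $\langle\nabla|{\Rm}|^{p},\nabla F\rangle=2p|{\Rm}|^{p-2}R_{ijk\ell}R_{ijkm,\ell}F_{m}$ using the contracted second Bianchi identity, and then integrates by parts a \emph{second} time to move $\nabla_{\ell}$; the resulting terms contain either $\Hess F$ (with $|{\Hess F}|=|{\Ric}|\le\delta/F$) or the contraction $R_{ijkm}F_{m}$ with no derivative on $\Rm$, which by (\ref{nable-Ric}) is a difference of $\nabla\Ric$ terms and hence is estimated by $|{\nabla\Ric}|$. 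Only then is Cauchy--Schwarz applied, with the carefully split weights $\delta p$ and $\delta^{-1}p^{3}$, reducing everything to $\int|{\nabla\Ric}|^{2}|{\Rm}|^{p-1}F^{2-a}\phi^{q}$ and $\int|{\nabla\Rm}|^{2}|{\Rm}|^{p-3}F^{-a}\phi^{q}$, whose weights match Propositions \ref{int nabla Ric} and \ref{int nabla Rm} exactly; chaining the $c\delta^{2}p^{2}$ bound of Proposition \ref{int nabla Ric} through Proposition \ref{int nabla Rm} is what produces the factors $(\delta p)^{3}$ and $\delta p^{5}$. Once you have passed to $|{\nabla\Rm}|$ and discarded the index structure, the identity $\Rm_{ilkj}f_{j}=\nabla_{i}\Ric_{kl}-\nabla_{l}\Ric_{ki}$ has no term left to act on, so your subsequent appeal to it and to Proposition \ref{int nabla Ric} cannot be carried out as described.
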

\begin{proof}[Proof of Theorem \ref{int-estimate}]
    Under the setup described above, we can take $r_{0}>0$ such that (\ref{cond-of-r0}) holds. Then by (\ref{ricc}) and Lemma \ref{seqn},
    \be
    \begin{aligned}
    -cR_{0}\int_{\M}|{\Rm}|^{p}F^{-a}\phi^{q}\leq&~-\int_{\M}|{\Rm}|^{p}\scal F^{1-a}\phi^{q}\\
    =&~-\int_{\M}|{\Rm}|^{p}(\Delta F)F^{1-a}\phi^{q}\\
    =&~\int_{\M}\langle\nabla|{\Rm}|^{p},\nabla F\rangle F^{1-a}\phi^{q}\\
    &~+(1-a)\int_{\M}|{\nabla F}|^{2}|{\Rm}|^{p}F^{-a}\phi^{q}\\
    &~-\frac{q}{r}\int_{\M}|{\nabla F}|^{2}|{\Rm}|^{p}F^{1-a}\phi^{q-1}
    \\
    =:&~I+II+III,
    \end{aligned}
    \ee
where we take the integration by parts at the second equality. Evidently, $III\leq0$. For $II$, by the choice of $r_{0}$, we have
\be
II\leq\frac{1-a}{2}\int_{\M}|{\Rm}|^{p}F^{-a}\phi^{q}+C.
\ee
For $I$, notice the identity
\be
\langle\nabla|{\Rm}|^{p},\nabla F\rangle=pR_{ijk\ell}R_{ijk\ell,m}F_{m}|{\Rm}|^{p-2}=2pR_{ijk\ell}R_{ijkm,\ell}F_{m}|{\Rm}|^{p-2},
\ee
which follows from the second Bianchi identity. Then we perform integration by parts on $I$,
\be
\begin{aligned}
    I=&~-2p\int_{\M}R_{ijkm}F_{m}R_{ijk\ell,\ell}|{\Rm}|^{p-2}F^{1-a}\phi^{q}\\
    &~-2p\int_{\M}R_{ijkm}R_{ijk\ell}F_{m\ell}|{\Rm}|^{p-2}F^{1-a}\phi^{q}\\
    &~-2p\int_{\M}R_{ijkm}F_{m}R_{ijk\ell}(\nabla_{\ell}|{\Rm}|^{p-2})F^{1-a}\phi^{q}\\
    &~-2p(1-a)\int_{\M}R_{ijkm}F_{m}R_{ijk\ell}F_{\ell}|{\Rm}|^{p-2}F^{-a}\phi^{q}\\
    &~+\frac{2pq}{r}\int_{\M}R_{ijkm}F_{m}R_{ijk\ell}F_{\ell}|{\Rm}|^{p-2}F^{1-a}\phi^{q-1}\\
    =:&~\mathfrak{A}+\mathfrak{B}+\mathfrak{C}+\mathfrak{D}+\mathfrak{E}.
\end{aligned}
\ee
For $\mathfrak{B}$, since $|{\Ric}|=|{\Hess F}|\leq \delta/F$ outside $D(r_{0})$,
\be
\mathfrak{B}\leq c\delta p\int_{\M}|{\Rm}|^{p}F^{-a}\phi^{q}+C.
\ee
By (\ref{nable-Ric}), $|\nabla F|\leq1$ and $F\leq r$ on $D(r)$,
\be
\begin{aligned}
    \mathfrak{A}+\mathfrak{D}+\mathfrak{E}\leq&~\int_{\M}|{\nabla\Ric}||{\Rm}|^{p-1}(cpF^{1-a}\phi^{q}+cpa F^{-a}\phi^{q}+cp^{2}F^{-a}\phi^{q-1})\\
    \leq&~\delta p\int_{\M}|{\nabla\Ric}|^{2}|{\Rm}|^{p-1}F^{2-a}\phi^{q}+c\delta^{-1}p\int_{\M}|{\Rm}|^{p-1}F^{-a}\phi^{q}\\
    &~+c\delta^{-1}pa^{2}\int_{\M}|{\Rm}|^{p-1}F^{-2-a}\phi^{q}+c\delta^{-1}p^{3}\int_{\M}|{\Rm}|^{p-1}F^{-2-a}\phi^{q-2}\\
    \leq&~\delta p\int_{\M}|{\nabla\Ric}|^{2}|{\Rm}|^{p-1}F^{2-a}\phi^{q}+c\delta^{-1}p^{3}\int_{\M}|{\Rm}|^{p-1}F^{-a}\phi^{q-2}+C,
\end{aligned}
\ee
where we apply Cauchy's inequality to the second line. For $\mathfrak{C}$, by applying (\ref{nable-Ric}) and Cauchy's inequality again, we have
\be
\begin{aligned}
\mathfrak{C}\leq&~cp^{2}\int_{\M}|{\nabla\Ric}||{\nabla\Rm}||{\Rm}|^{p-2}F^{1-a}\phi^{q}\\
    \leq&~\delta^{-1}p^{3}\int_{\M}|{\nabla\Ric}|^{2}|{\Rm}|^{p-1}F^{2-a}\phi^{q}+c\delta p\int_{\M}|{\nabla\Rm}|^{2}|{\Rm}|^{p-3}F^{-a}\phi^{q}.
\end{aligned}
\ee
In summary, we obtain
\be
\begin{aligned}
\left(\frac{1}{2}a-\delta p-cR_{0}-\frac{1}{2}\right)\int_{\M}|{\Rm}|^{p}F^{-a}\phi^{q}\leq&~(c\delta p+\delta^{-1}p^{3})\int_{\M}|{\nabla\Ric}|^{2}|{\Rm}|^{p-1}F^{2-a}\phi^{q}\\
&~+c\delta p\int_{\M}|{\nabla\Rm}|^{2}|{\Rm}|^{p-3}F^{-a}\phi^{q} \\
&~+c\delta^{-1}p^{3}\int_{\M}|{\Rm}|^{p-1}F^{-a}\phi^{q-2}+C.
\end{aligned}
\ee
Combining this with Proposition \ref{int nabla Ric} and Proposition \ref{int nabla Rm}, 
\be
\left(\frac{1}{2}a-c(\delta p)^{3}-c\delta p^{5}-c\right)\int_{\M}|{\Rm}|^{p}F^{-a}\phi^{q}\leq c(\delta,p)\int_{\M}|{\Rm}|^{p-1}F^{-a}\phi^{q-2}+C,
\ee
where $c(\delta,p)>0$ is a constant. By Young's inequality, 
\be
c(\delta,p)\int_{\M}|{\Rm}|^{p-1}F^{-a}\phi^{q-2}\leq c\delta p^{5}\int_{\M}|{\Rm}|^{p}F^{-a}\phi^{q}+c(\delta,p)\int_{\M}F^{-a-2p}\phi^{q-2p}.
\ee
Since the last term is bounded from above by the polynomial volume growth, this indicates
\be
\left(\frac{1}{2}a-c(\delta p)^{3}-c\delta p^{5}-c\right)\int_{\M}|{\Rm}|^{p}F^{-a}\phi^{q}\leq C.
\ee
Therefore, the result follows by letting $r\rightarrow\infty$.
\end{proof}

\section{Main Theorems - expander}\label{sec-proof of expander case}
Before we prove the theorem \ref{thm-expander with quadratic decay}, we recall an integral curvature estimate in \cite[Proposition 3.1]{CLX23}.
\begin{prop}\label{int-estimate-expander}\cite[Proposition 3.1 and Remark 3.1]{CLX23}
Let $(\M,g,f)$ be an $n$-dimensional complete expanding gradient Ricci soliton with finite asymptotic Ricci curvature ratio
\be
\limsup_{x\rightarrow\infty}|{\Ric}|r(x)^{2}<\infty.
\ee
Then for any $a>0$, there is a constant $c>0$ such that for $p>a+c$, 
\be
\left[1-p^{-1}(a+c)\right]\int_{\M}|{\Rm}|^{p}F^{a}d\vol_g\leq c(p),
\ee
where $c(p)$ is of order $p^p$ and $F:=\frac{n}{2}-f$.
\end{prop}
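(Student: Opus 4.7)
The plan is to follow the integration-by-parts strategy of Theorem \ref{int-estimate}, adapted to the expanding case using the identities of Lemma \ref{seqn-expander}. The key structural differences are that $\Delta F=\scal+n/2$ rather than $\Delta F=\scal$, that $|{\nabla F}|^{2}\sim F$ at infinity rather than being bounded, and that the weight $F^{a}$ with positive $a$ is the natural one, since by Proposition \ref{F is approximately r^2/4} we have $F\sim r^{2}/4$ and at most polynomial volume growth. I would introduce a cutoff $\phi=(r-F)/r$ on $D(r):=\{F\le r\}$ (zero elsewhere), raised to a power $q=2p+1$, and prove the estimate with $\phi^{q}$ inserted, passing to the limit $r\to\infty$ at the end.

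First, I would integrate $\int_{\M}|{\Rm}|^{p}F^{a}(\Delta F)\phi^{q}$ by parts. Using $\Delta F=\scal+n/2$ on the left and $|{\nabla F}|^{2}=F-n/2+\scal$ on the boundary-type term $-a\int|{\Rm}|^{p}F^{a-1}|{\nabla F}|^{2}\phi^{q}$, the $\scal$ corrections are harmless by $|{\Ric}|\le\eta/F$, and rearrangement gives
\[
\bigl(a+\tfrac{n}{2}\bigr)\int_{\M}|{\Rm}|^{p}F^{a}\phi^{q}\le -\int_{\M}F^{a}\phi^{q}\,\langle\nabla|{\Rm}|^{p},\nabla F\rangle+\mathrm{error},
\]
where the error collects $O(|{\Ric}||{\Rm}|^{p}F^{a}\phi^{q})$ terms and the cutoff boundary term $\frac{q}{r}\int|{\Rm}|^{p}F^{a+1}\phi^{q-1}$, the latter absorbed since $F\le r$ on $D(r)$.

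Next, I would rewrite $\langle\nabla|{\Rm}|^{p},\nabla F\rangle=2p\,R_{ijkm}R_{ijk\ell,m}F_{\ell}|{\Rm}|^{p-2}$ using the second Bianchi identity \eqref{nable-Ric-expander}, and integrate by parts a second time to move the $m$-derivative. The derivative falls either onto $R_{ijkm}$ (producing $|{\nabla\Ric}|\cdot|{\Rm}|^{p-1}$, handled by Cauchy--Schwarz), onto $F_{\ell}$ (producing $\Hess F=\Ric+g/2$, bounded outside a compact set by the soliton equation), or onto the weight $F^{a}\phi^{q}$. After Young's inequality, the remaining bulk quadratics
\[
\int_{\M}|{\nabla\Ric}|^{2}|{\Rm}|^{p-1}F^{a+1}\phi^{q},\qquad \int_{\M}|{\nabla\Rm}|^{2}|{\Rm}|^{p-3}F^{a}\phi^{q}
\]
would be controlled by two auxiliary estimates entirely analogous to Propositions \ref{int nabla Ric} and \ref{int nabla Rm}, but now based on the expander evolution inequalities \eqref{Laplacian-Ric-expander} and \eqref{Laplacian-Rm-expander}. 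Each is proved by integrating $\Delta|{\Ric}|^{2}$ or $\Delta|{\Rm}|^{2}$ by parts against the appropriate weight and recycling the output into the master integral $\int|{\Rm}|^{p}F^{a}\phi^{q}$ plus lower-order tails, using $|{\Ric}|\le\eta/F$ at every step.

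The main obstacle is the careful bookkeeping of powers of $p$ and $a$: each integration by parts and each Young's inequality injects prefactors depending on $p$, $a$, and $\eta$, while the leftover tail integrals $\int F^{a-\ell}\phi^{q-\ell}$ are bounded only via the polynomial volume growth of $D(r)$ at the cost of a $p^{p}$ constant from Young's inequality with conjugate exponents $(p,p/(p-1))$. Collecting all estimates and choosing $p>a+c$ so that the coefficient of the master integral stays positive, one obtains $[1-p^{-1}(a+c)]\int_{\M}|{\Rm}|^{p}F^{a}\phi^{q}\le c(p)$ with $c(p)=O(p^{p})$, and sending $r\to\infty$ yields the stated bound.
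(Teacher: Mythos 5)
There is a genuine gap, and it lies in how you orient the main integration by parts. First, note that the paper does not actually reprove this proposition: it quotes \cite[Proposition 3.1, Remark 3.1]{CLX23} and only checks that dropping $\Ric\ge 0$ is harmless, because quadratic Ricci decay still gives polynomial volume growth and $F\sim r^{2}/4$ (Proposition \ref{F is approximately r^2/4}). Your plan to re-derive it by adapting the steady-case scheme of Theorem \ref{int-estimate} is in the right family of techniques (weighted IBP with the cutoff $\phi=(r-F)/r$, second Bianchi identity, auxiliary estimates analogous to Propositions \ref{int nabla Ric}--\ref{int nabla Rm} based on \eqref{Laplacian-Ric-expander}--\eqref{Laplacian-Rm-expander}, tails via polynomial volume growth). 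The problem is that you mis-identify the main term. You put the coefficient $(a+\tfrac n2)$, coming from $\Delta F$ and the weight, on the left, and you treat the $\Hess F$ contribution in the second IBP as a harmless bounded error. But for an expander $\Hess F=\Ric+\tfrac12 g$, and the trace part $\tfrac12 g$, contracted against $R_{ijkm}R_{ijk\ell}$, produces exactly $\mp p\int_{\M}|{\Rm}|^{p}F^{a}\phi^{q}$ — a term of the same type as the master integral but with coefficient $p$, i.e.\ far larger than $(a+\tfrac n2)$. In your arrangement it lands on the right-hand side with a $+$ sign: indeed $\langle\nabla|{\Rm}|^{p},\nabla F\rangle\approx -p|{\Rm}|^{p}$ asymptotically (test on the conical model $|{\Rm}|\sim c/F$, $|{\nabla F}|^{2}\sim F$), so $-\int\langle\nabla|{\Rm}|^{p},\nabla F\rangle F^{a}\phi^{q}\approx +p\int|{\Rm}|^{p}F^{a}\phi^{q}$. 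Your master inequality $(a+\tfrac n2)\int|{\Rm}|^{p}F^{a}\phi^{q}\le p\int|{\Rm}|^{p}F^{a}\phi^{q}+\mathrm{error}$ is then vacuous and cannot be closed; it also could never produce the stated factor $[1-p^{-1}(a+c)]$, which only appears after dividing by a main coefficient of size $p$. The cutoff term fares similarly in your orientation: after $F\le r$ it has coefficient $q\approx 2p$, so it cannot be ``absorbed'' into a left side of size $(a+\tfrac n2)$.

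The correct arrangement (which is what \cite{CLX23} does, and which is the expander analogue of Theorem \ref{int-estimate} with the roles of the terms swapped) is to equate the two evaluations of $I:=\int_{\M}\langle\nabla|{\Rm}|^{p},\nabla F\rangle F^{a}\phi^{q}$ and solve for the $\Hess F$-term: the $\tfrac12 g$ part of $\Hess F$ then yields $p\int|{\Rm}|^{p}F^{a}\phi^{q}$ on the \emph{left}, while the direct IBP of $I$ contributes only $(a+c)\int|{\Rm}|^{p}F^{a}\phi^{q}$ on the right (the $a$ from the weight via $|{\nabla F}|^{2}\le F$, the $c\approx\tfrac n2$ from $\Delta F=\scal+\tfrac n2$), the cutoff term from this first IBP now enters with a favorable sign and can be discarded, and the remaining terms carry a factor $\Rm(\cdot,\cdot,\cdot,\nabla F)=\BigO(|{\nabla\Ric}|)$ by \eqref{nable-Ric-expander} and are handled exactly as you propose, by Cauchy--Schwarz against the auxiliary $\int|{\nabla\Ric}|^{2}|{\Rm}|^{p-1}F^{a+2}\phi^{q}$ and $\int|{\nabla\Rm}|^{2}|{\Rm}|^{p-3}F^{a}\phi^{q}$ integrals and Young's inequality with $p^{p}$-size constants. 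Dividing by $p$ and letting $r\to\infty$ gives the stated bound. (Two minor slips: $|{\nabla F}|^{2}=F-\tfrac n2-\scal$, not $F-\tfrac n2+\scal$, with the paper's normalization; and the natural weight in the $|{\nabla\Ric}|^{2}$ auxiliary estimate is $F^{a+2}$, matching $|{\nabla F}|^{2}\sim F$.)
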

We point out that, as \cite[Remark 3.1]{CLX23} elaborates, although the original statement assumes $\Ric\geq0$ and finite asymptotic scalar curvature, the assertion indeed holds under the finite asymptotic Ricci curvature ratio without $\Ric\ge 0$. Since the quadratic decay of Ricci curvature ensures the polynomial volume growth and the potential function $F$ has quadratic growth by Proposition \ref{F is approximately r^2/4}, all the arguments in the proof of \cite[Proposition 3.1]{CLX23} remain valid.

The following proposition shows that the finite asymptotic Ricci curvature ratio implies the existence of asymptotic cone at infinity, which is the first part of Corollary \ref{cl-structure of limit of fiber}. In particular, this gives a uniform lower bound for unit ball volume on $\M$.
\begin{prop}\label{prop-vol lower bound of expander}
    Let $(\M,g,f)$ be an $n$-dimensional complete expanding gradient Ricci soliton with finite asymptotic Ricci curvature ratio
\be
\limsup_{x\rightarrow\infty}|{\Ric}|r(x)^{2}=:\eta<\infty.
\ee
Then there exist a constant $R_{0}>0$ and a $\mathcal{C}^{0}$-Riemannian metric $g_{\infty}$ on $\Sigma_{R_{0}}$ such that 
   \be
        \varphi_{t-R_{0}}^{*}t^{-2}g|_{\Sigma_{t}}\overset{\mathcal{C}^{0}}{\rightarrow}g_{\infty},
   \ee
   on $\Sigma_{R_{0}}$ as $t\rightarrow\infty$. Moreover, $(\M,t^{-2}g,p)$ converges to $(C(\Sigma_{R_{0}},g_{\infty}),o)$ in the Gromov-Hausdorff sense and $\mathcal{C}^{0}_{\text{loc}}(C(\Sigma_{R_{0}})\setminus\{o
   \})$-sense as $t\rightarrow\infty$. In particular, there exists a constant $v>0$ such that $\vol_g(x,1)\geq v$ for all $x\in\M$.
\end{prop}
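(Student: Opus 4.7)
The plan is to parametrize the region $\{z\ge R_0\}$ by the flow $\varphi$ so that it becomes $\Sigma_{R_0}\times[R_0,\infty)$, derive an ODE-type bound for the pulled-back rescaled fiber metric $\tilde g_z := \varphi_{z-R_0}^{*}(z^{-2}g|_{\Sigma_z})$, show its $z$-derivative is integrable in $z$, and then upgrade the resulting $\mathcal{C}^0$ limit to pointed Gromov--Hausdorff convergence and to a uniform volume lower bound.

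First I would choose $R_0$ large, using Proposition \ref{F is approximately r^2/4}, so that $\nabla z\neq0$ and $|\nabla z|^{2}=|\nabla F|^{2}/F$ is close to $1$ on $\{z\ge R_0\}$; then $\varphi$ identifies $\{z\ge R_0\}$ diffeomorphically with $\Sigma_{R_0}\times[R_0,\infty)$ and the metric takes the form $g = |\nabla z|^{-2}dz^{2} + h_z$, where $h_z$ denotes $g|_{\Sigma_z}$ pulled back by $\varphi_{z-R_0}$. Using $z=2\sqrt{F}$ and the soliton identity $\Hess F=\Ric+\tfrac12 g$ (Lemma \ref{seqn-expander}), the fact that $\partial_z$ corresponds to $\nabla z/|\nabla z|^2$, and the vanishing of $dF$ on $T\Sigma_z$, a direct computation gives
\begin{equation*}
\partial_z h_z \;=\; \frac{4}{z\,|\nabla z|^{2}}\bigl(\Ric|_{T\Sigma_z}+\tfrac12\,h_z\bigr),
\end{equation*}
and after incorporating the derivative of the conformal factor $z^{-2}$,
\begin{equation*}
\partial_z \tilde g_z \;=\; \frac{4\,\Ric|_{T\Sigma_z}}{z^{3}\,|\nabla z|^{2}} \;+\; \frac{2\,(1-|\nabla z|^{2})}{z^{3}\,|\nabla z|^{2}}\,h_z.
\end{equation*}

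From \eqref{eq-quadratic decay of Ric} and $r\sim z$ (Proposition \ref{F is approximately r^2/4}) I get $|\Ric|_g\le C/z^{2}$; combined with $\scal\ge-n/2$ and $|\scal|\le\sqrt{n}\,|\Ric|$, this also yields $|1-|\nabla z|^{2}|=(\scal+n/2)/F\le C/z^{2}$. Using the scaling $|T|_{\tilde g_z}=z^{2}|T|_{h_z}$ for $(0,2)$-tensors and $|h_z|_{\tilde g_z}=z^2\sqrt{n-1}$, both terms on the right of the previous display have $\tilde g_z$-norm bounded by $C/z^{3}$, so $|\partial_z \tilde g_z|_{\tilde g_z}\le C/z^{3}$. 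Integrability on $[R_0,\infty)$ forces $\{\tilde g_z\}$ to be $\mathcal{C}^{0}$-Cauchy on $\Sigma_{R_0}$, so it converges uniformly to a continuous Riemannian metric $g_\infty$; positive-definiteness is preserved because the same bound yields $(1-\epsilon_{R_0})\tilde g_{R_0}\le\tilde g_z\le(1+\epsilon_{R_0})\tilde g_{R_0}$ with $\epsilon_{R_0}\to 0$ as $R_0\to\infty$.

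For the pointed Gromov--Hausdorff statement, writing $t^{-2}g=|\nabla z|^{-2}d(z/t)^{2}+(z/t)^{2}\tilde g_z$ on $\{z\ge R_0\}$ and substituting $s=z/t$ gives $\mathcal{C}^{0}_{\text{loc}}$-convergence to $ds^{2}+s^{2}g_\infty$ on $C(\Sigma_{R_0})\setminus\{o\}$ as $t\to\infty$; the complementary region $\{z\le R_0\}$ has bounded $g$-diameter and collapses to the apex under $t^{-1}d_g$, and together with the finite asymptotic volume ratio of Proposition \ref{F is approximately r^2/4} this gives pGH-convergence to $(C(\Sigma_{R_0}),dt^{2}+t^{2}g_\infty,o)$. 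For the volume bound, points with $z(x)\le 2R_0$ lie in a compact set where $\vol_g(x,1)$ is bounded below by continuity; for $t:=z(x)\gg R_0$, the $\mathcal{C}^{0}$-closeness of $t^{-2}g$ to the cone near the cross-section $\{s=1\}$ yields $\vol_{t^{-2}g}(x,1/t)\ge c_n/2$ for a model constant $c_n>0$, and rescaling gives $\vol_g(x,1)\ge v$ uniformly. The main obstacle is the cancellation in the second step: naively $\partial_z h_z=O(1/z)$, which would only give $|\partial_z \tilde g_z|_{\tilde g_z}=O(1/z)$, not integrable. The improvement to $1/z^{3}$ depends essentially on the $\tfrac12 g$ contribution to $\Hess F$ from the expander soliton identity exactly cancelling the $-2z^{-3}h_z$ produced by differentiating $z^{-2}$, leaving only the smaller remainder driven by $1-|\nabla z|^{2}=O(1/z^{2})$ and $|\Ric|=O(1/z^{2})$.
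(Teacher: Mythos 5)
Your proposal is correct and follows essentially the same route as the paper: the decisive point in both is that $\Hess F=\Ric+\tfrac12 g$ produces the cancellation making $\partial_z\bigl(z^{-2}g|_{\Sigma_z}\bigr)$ integrable in $z$ (you get the sharper $O(z^{-3})$ rate, the paper records $O(z^{-2})$ relative to the rescaled metric, either suffices), after which the $\mathcal{C}^0$ limit, the annulus/cone comparison, and the non-collapsing conclusion proceed exactly as in the paper's argument. The only slip is in the final volume step: the correct claim is $\vol_{t^{-2}g}(x,1/t)\ge \tfrac{c}{2}\,t^{-n}$, with $c$ a small-ball volume constant of the cone near the cross-section (so depending on $(\Sigma_{R_0},g_\infty)$, not on $n$ alone), which after rescaling still yields the uniform bound $\vol_g(x,1)\ge v$.
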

\begin{proof}[Proof of Proposition \ref{prop-vol lower bound of expander}]
    We first prove the $\mathcal{C}^{0}$-limit of the level set exists and we follow the notation in the statement of Corollary \ref{cl-topology of ends on expander}. 
    Since 
    \begin{eqnarray*}
    \left|\frac{\nabla z}{|{\nabla z}|^{2}}\right|=\frac{\sqrt{F}}{|{\nabla F}|}=\left(1+\frac{\scal+\frac{n}{2}}{F-\scal-\frac{n}{2}}\right)^{1/2}&=&1+\frac{1}{2}\left(\frac{n}{2}\cdot\frac{4}{r^{2}}\right)+\BigO(r^{-4})\\
    &=&1+nr(x)^{-2}+\BigO(r(x)^{-4}),
    \end{eqnarray*}
    as $r(x)\rightarrow\infty$, this shows that $\Sigma_{R}$ is a smooth hypersurface for sufficiently large $R$ (say $R\geq R_{0}$) and $z=r(x)+\BigO(1)$ as $r(x)\rightarrow\infty$. Also, they are diffeomorphic to each other via the flow $\varphi_{R_{2}-R_{1}}:\Sigma_{R_{1}}\rightarrow \Sigma_{R_{2}}$ with vector field $\frac{\partial\varphi_{t}}{\partial t}=\frac{\nabla z}{|{\nabla z}|^{2}}$. Recall that we have the following results from \cite[Lemma 2.5, 2.6, 2.7 and Corollary 3.2]{CD15}.
    \begin{clm}\label{cl-structure on fiber C0}
        There is a constant $C>0$ (may depending on the geometry of $(\M,g,f)$) such that the following properties about $(\Sigma_{r},r^{-2}g|_{\Sigma_{r}})$ hold for all $r>R_{0}$.
        \begin{enumerate}
            \item\label{cl2-vol} (Volume Control) For $r>R_{0}$,
            \be
                C^{-1}\leq\vol(\Sigma_{r},r^{-2}g|_{\Sigma_{r}})\leq C.
            \ee
            \item\label{cl2-diam} (Diameter Control) For $r>R_{0}$,
            \be
            \diam(\Sigma_{r},r^{-2}g|_{\Sigma_{r}})\leq C,
            \ee
            where the diameter here means the sum of the intrinsic diameters of each connected component.
            \item\label{cl2-GH approx} (Gromov-Hausdorff approximation) For $t\geq s\geq R_{0}$, the map $\varphi_{t-s}:(\Sigma_{s},s^{-2}g|_{\Sigma_{s}})\rightarrow(\Sigma_{t},t^{-2}g|_{\Sigma_{t}})$ is an $o(1)$-Gromov-Hausdorff approximation as $s\rightarrow\infty$.
        \end{enumerate}
    \end{clm}
            Denote the second fundamental from on $\Sigma_{R}$ with respect to the normal vector $\n=\nabla z/|{\nabla z}|$ by $h_{R}$. Then $h_{R}=\Hess z/|{\nabla z}|$ since $\Sigma_{R}=z^{-1}(R)$. Note that we have  $F=\frac{r(x)^{2}}{4}+\BigO(r(x))$, $|{\nabla F}|=\frac{r(x)}{2}+\BigO(1)$, and $\Hess{F}-\frac{1}{2}g=(\eta+o(1))r(x)^{-2}$. These imply $R=z=r(x)+\BigO(\sqrt{r})$ and $|{\nabla z}|=\frac{|{\nabla F}|}{\sqrt{F}}=1-nR^{-2}+o(R^{-2})$ as $R\rightarrow\infty$. Consider the metric $g_{R}:=z^{-2}g|_{\Sigma_{R}}$ on the slice $\Sigma_{R}:=\{z=R\}$. Since 
    \be\label{eq-z}
    \frac{\partial}{\partial z}=\frac{1}{|{\nabla z}|}\n=\left(1+\frac{-\scal-\frac{n}{2}}{F}\right)^{-\frac{1}{2}}\n=(1+nz^{-2}+o(z^{-2}))\n
    \ee
    as $r\rightarrow\infty$. Since $h_{R}=\Hess z/|{\nabla z}|$ and $\mathcal{L}_{\n}g=2h_{R}$, we have 
    \be\label{eq-the derivative of induced metric}    \partial_{z}g|_{\Sigma_{R}}=\mathcal{L}_{\frac{\partial}{\partial z}}g|_{{\Sigma_{R}}}=(1+nz^{-2}+o(z^{-2}))\mathcal{L}_{\n}g|_{T{\Sigma_{R}}}=2(1+2nz^{-2}+o(z^{-2}))\Hess z.
    \ee
    Also,
    \be\label{eq-Hess z}
    \Hess z=\frac{\Hess F}{\sqrt{F}}-\frac{1}{2F^{\frac{3}{2}}}dF\otimes dF=\frac{1}{z}g-\frac{1}{z}dz\otimes dz+(\eta+o(1))z^{-3}g=\frac{1}{z}g|_{T\Sigma_{R}}+(\eta+o(1))z^{-3}g.
    \ee
    Therefore,
    \be
    \frac{\partial}{\partial z}g_{R}=\frac{-2g|_{T\Sigma_{R}}}{z^{3}}+\frac{1}{z^{2}}\frac{\partial g|_{T|_{\Sigma_{R}}}}{\partial z}=2(2n+\eta+o(1))z^{-4}g|_{T\Sigma_{R}}=2(2n+\eta+o(1))z^{-2}g_{R}.
    \ee
    This implies $|\partial_{z}g_{R}|\leq Cz^{-2}g_{R}$ for some $C>0$ and after integrating from $w$ to $z$, 
    \be\label{eq-uniform convergence of metric}
    e^{\frac{C}{z}-\frac{C}{w}}\varphi_{w-R_{0}}^{*}g_{R}|_{\Sigma_{w}}\leq \varphi_{z-R_{0}}^{*}g_{R}|_{\Sigma_{z}}\leq e^{\frac{C}{w}-\frac{C}{z}}\varphi_{w-R_{0}}^{*}g_{R}|_{\Sigma_{w}},
    \ee
    for $R_{0}\leq w\leq z$. Therefore, there is a $\mathcal{C}^{0}$-metric $g_{\infty}$ on $\Sigma_{R_{0}}$ such that $(\Sigma_{R},g_{R})$ converges to $(\Sigma_{R_{0}},g_{\infty})$ in the $\mathcal{C}^{0}$-sense via the diffeomorphism $\varphi_{R-R_{0}}:\Sigma_{R_{0}}\rightarrow\Sigma_{R}$.
\vskip0.2cm
    Now, for any $0<a<b<\infty$, denote the annulus region of the level set
    \be
    (A(a,b;t),t^{-2}g):=(z^{-1}([at,bt]), t^{-2}g).
    \ee
    Since $\|{\varphi_{s-R_{0}}^{*}g_{s}}-\varphi_{at-R_{0}}^{*}g_{at}\|_{g_{R_{0}}}=o(1)$ for $s\in[at,bt]$ as $t\rightarrow\infty$, this implies
    \be
    \lim_{t\rightarrow\infty}\|(\Phi_{t}^{-1})^{*}(t^{-2}g)-(dz\otimes dz+z^{2}(\varphi_{zt-R_{0}}^{*}g_{z}))\|_{\mathcal{C}^{0}(dz^{2}+z^{2}g_{R_{0}})}=0,
    \ee
    where $\Phi_{t}(p):=(z(p)/t,\varphi_{z(p)-R_{0}}^{-1}(p))$ is a diffeomorphism between $A(a,b;t)$ and $[a,b]\times\Sigma_{R_{0}}$ as $t\gg1$. Hence,
    \be
    \lim_{t\rightarrow\infty}\|(\Phi_{t}^{-1})^{*}(t^{-2}g)-(dz\otimes dz+z^{2}g_{\infty})\|_{\mathcal{C}^{0}(dz^{2}+z^{2}g_{R_{0}})}=0.
    \ee    
    Now, fixed $\varepsilon>0$. Since $(\frac{1}{4}-\varepsilon)d_{g}(p,\cdot)^{2}\leq F(\cdot)\leq (\frac{1}{4}+\varepsilon)d_{g}(p,\cdot)^{2}$ holds for $d_{g}(p,\cdot)\gg1$,
    \be
    A(a,b;t)\subset \left\{ (1-4\varepsilon)a^{2}\leq d_{t^{-2}g}(p,\cdot)^{2}\leq (1+4\varepsilon)b^{2}\right\}\subset A\left((1-4\varepsilon)^{2}a,(1+4\varepsilon)^{2}b;t\right),
    \ee
    for all $t\gg1$ and $0<a<b$. Letting $t\rightarrow\infty$ and $\varepsilon\rightarrow0$, we yield 
    \be
    \left(\left\{ a\leq d_{t^{-2}g}(p,\cdot)\leq b\right\},{t^{-2}g}\right)\overset{\text{GH}}{\longrightarrow}([a,b]\times \Sigma_{R_{0}},dz\otimes dz+z^{2}g_{\infty})
    \ee
    for any $0<a<b$. 
    Letting  $a\rightarrow0^{+}$, the ball $\overline{B_{t^{-2}g}(p,b)}$ converges to the cone metric $(\{o\}\cup\left((0,b]\times \Sigma_{R_{0}}\right),dz\otimes dz+z^{2}g_{\infty})$ as $t\rightarrow\infty$. Since $b>0$ is arbitrary, by the definition of pointed Gromov-Hausdorff convergence, any asymptotic cone is $C(\Sigma_{R_{0}},g_{\infty})$. The $\mathcal{C}^{0}_{\text{loc}}$-closeness outside the apex follows from the almost isometric map $\Psi_{t}:\left(\left\{ a\leq d_{t^{-2}g}(p,\cdot)\leq b\right\},{t^{-2}g}\right)\rightarrow([a-o(1),b+o(1)]\times\Sigma_{R_{0}},dz^{2}+z^{2}g_{\infty})$ as $t\rightarrow\infty$ for any $0<a\leq b<\infty$. \newline
    \vskip0.1cm
    It remains to prove the volume non-collapsing. By (\ref{eq-uniform convergence of metric}), there is a constant $v(\Sigma_{R_{0}})>0$ such that 
    \be
    \frac{\vol_{\varphi_{z-R_{0}}^{*}g_{R}|_{\Sigma_{z}}}(x,r)}{r^{n-1}}\geq v,
    \ee
    for all $x\in\Sigma_{R_{0}}$, $r\in(0,1]$, and $z\geq R_{0}$. Passing $z\rightarrow\infty$, (\ref{eq-uniform convergence of metric}) implies
    \be
    \frac{\vol_{g_{\infty}}(x,2r)}{r^{n-1}}\geq\frac{\vol_{\varphi_{z-R_{0}}^{*}g_{R}|_{\Sigma_{z}}}(x,r)}{(2r)^{n-1}}\geq \frac{v}{2^{n-1}},
    \ee
    for all $x\in\Sigma_{R_{0}}$ and $r\in(0,1]$. Therefore, since 
    \be
    d_{dz^{2}+z^{2}g_{\infty}}((R_{1},x_{1}),(R_{2},x_{2}))^{2}=\begin{cases}
        R_{1}^{2}+R_{2}^{2}-2R_{1}R_{2}\cos(d_{g_{\infty}}(x_{1},x_{2})),\text{ if }d_{g_{\infty}}(x_{1},x_{2})\leq\pi;\\
        (R_{1}+R_{2})^{2},\text{ otherwise.}
    \end{cases}
    \ee
    for all $(R_{i},x_{i})\in C(\Sigma_{R_{0}})$, we have
    \be
    \begin{aligned}
    {\vol_{dz^{2}+z^{2}g_{\infty}}((R,x),r)}=&~\int_{R-r}^{R+r}z^{n-1}\vol_{g_{\infty}}\left(x,\arccos\left(\frac{-r^{2}+R^{2}+z^{2}}{2Rz}\right)\right)dz\\
    \geq&~\int_{R-r}^{R+r}z^{n-1}\vol_{g_{\infty}}\left(x,\frac{|z-R|}{z}\right)dz\\
    \geq&~\frac{v}{n2^{2n-2}}r^{n},
    \end{aligned}
    \ee
    for $r\leq\min\{1,\pi,\frac{R}{2}\}$. Therefore, by the $\mathcal{C}^{0}_{\text{loc}}$-convergence via $\Psi_{t}$, for $r(x)=:R\gg1$, we have $2\Psi_{R}^{*}(dz^{2}+z^{2}g_{\infty})\geq R^{-2}g\geq\frac{1}{2}\Psi_{R}^{*}(dz^{2}+z^{2}g_{\infty})$ on $r(z)\in[\frac{R}{4},4R]$. Hence,
    \be
    \begin{aligned}
        \vol_{g}(x,1)=\frac{\vol_{R^{-2}g}(x,R^{-1})}{R^{-n}}\geq\frac{\vol_{dz^{2}+z^{2}g_{\infty}}(\Psi_{R}(x),1/2R)}{2^{n}R^{-n}}\geq\frac{v}{2^{4n-2}n}.
    \end{aligned}
    \ee
    This confirms the volume non-collapsing for all $x\in\M$.
\end{proof}
\begin{proof}[Proof of Theorem \ref{thm-expander with quadratic decay}
]
    We first show that $|{\Rm}|$ decays at least sub-quadratically. By Proposition \ref{int-estimate-expander}, for $p>a+c$ and $a>2n$,
    \be
    \int_{B_{1}(x)}|{\Rm}|^{p}\leq C'(p)(r(x)+1)^{-a}
    \ee
    for $r(x)\gg1$. By the calculation in \cite{CLX23}, the curvature satisfies
    \be
    \Delta|{\Rm}|^{2}\geq-C(F+|{\Rm}|)|{\Rm}|^{2}.
    \ee    
    Denote $u:=F+|{\Rm}|$. Now, we apply Moser's iteration argument (\cite{Li93}) to get
    \be\label{eq-Moser iteration}
    |{\Rm}|(x)^{2}\leq \tilde{C}\left[\left(\frac{\int_{B_{1}(x)}u^{n}}{\vol(x,1)}\right)^{\frac{2}{n}}+1\right]^{\frac{n}{p}}\left(\frac{\int_{B_{1}(x)}|{\Rm}|^{p}}{\vol(x,1)}\right)^{\frac{2}{p}},
    \ee
    for all $x\in\M$, where $\tilde{C}$ depends on $p,n$ and the lower bounds of $\Ric$ on $B_{1}(x)$ (Sobolev constant depends on the lower bound of Ricci curvature). Since the Ricci curvature is bounded, $\tilde{C}$ is a constant. By Proposition \ref{prop-vol lower bound of expander}, there is a constant $v>0$ such that $\vol(x,1)\geq v$ for all $x\in\M$. Also, H\"older inequality and the volume comparison imply
    \be
    \int_{B_{1}(x)}|{\Rm}|^{n}\leq\left(\int_{B_{1}(x)}|{\Rm}|^{p}\right)^{\frac{n}{p}}\vol(x,1)^{\frac{p-n}{p}}\leq C(1+r(x))^{-\frac{na}{p}}.    
    \ee
    Therefore, 
    \be \label{estimate}
    \frac{\int_{B_{1}(x)}u^{n}}{\vol(x,1)}\leq c\frac{\int_{B_{1}(x)}|{\Rm}|^{n}}{\vol(x,1)}+c(1+r(x))^{2n}\leq c(1+r(x))^{2n}.
    \ee
    Substitute (\ref{estimate}) into (\ref{eq-Moser iteration}),
    \be
    |{\Rm}|(x)^{2}\leq\tilde{C}(1+r(x))^{-\frac{2a}{p}+\frac{4n}{p}}=\tilde{C}(1+r(x))^{\frac{4n-2a}{p}}.
    \ee
    This implies that the curvature tends to zero at infinity if we choose $a=2n+1$.

    Now, we estimate the distance along the level set flow of $F$ as in \cite[Lemma 7]{C23}. Define $\Pi_{r}:=\{F=r\}$ for all $r\geq0$. Since $|\nabla F|^{2}=F-\scal-\frac{n}{2}$ and $\scal\rightarrow0$ as $x\rightarrow\infty$, may assume $|\nabla F|>.5$ on $\{F\geq\rho_{0}\}$. Then for any $x\in\Pi_{R}$ with $R\geq\rho_{0}$, there is a unique flow $\gamma$ generated by $\nabla F/|\nabla F|^2$, starting from $\gamma(\rho_{0})=:z\in\Pi_{\rho_{0}}$, and passing through $x=\gamma(R)$. Since $|\nabla F|>.5$ on $\{F\geq\rho_{0}\}$, the flow can be extended to infinity. Also, $F(\gamma(t))=t$ for all $t\geq\rho_{0}$. We claim the following: 
    \begin{clm}\label{t-simeq-distance-square}
        There is a constant $C(\Pi_{\rho_{0}},p)>0$ such that \be \frac{1}{4}r(\gamma(t))^{2}-Cr(\gamma(t))-C\leq t\leq \frac{1}{4}r(\gamma(t))^{2}+Cr(\gamma(t))+C
        \ee
        for all $t\geq\rho_{0}$ and all $z\in\Sigma_{\rho}$.
    \end{clm}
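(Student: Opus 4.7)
The plan is to deduce Claim \ref{t-simeq-distance-square} as an immediate consequence of Proposition \ref{F is approximately r^2/4}. The one conceptual input needed is that along the integral curve $\gamma$ of $\nabla F/|\nabla F|^2$, the flow parameter coincides with the potential function: since $\gamma'(t) = \nabla F/|\nabla F|^2$,
$$\frac{d}{dt} F(\gamma(t)) = \langle \nabla F(\gamma(t)), \gamma'(t)\rangle = 1,$$
and from the initial condition $\gamma(\rho_0) = z \in \Pi_{\rho_0} = \{F = \rho_0\}$ one obtains $F(\gamma(t)) = t$ for every $t \geq \rho_0$. Extendibility of the flow to $[\rho_0,\infty)$ was already noted in the text preceding the claim via $|\nabla F| > 1/2$ on $\{F \geq \rho_0\}$.

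With this identification in hand, I would substitute $x = \gamma(t)$ into the two-sided comparison in Proposition \ref{F is approximately r^2/4}. The upper bound becomes
$$t = F(\gamma(t)) \leq \left(\tfrac{r(\gamma(t))}{2} + \sqrt{F(p)}\right)^{2} = \frac{r(\gamma(t))^2}{4} + \sqrt{F(p)}\, r(\gamma(t)) + F(p),$$
while the lower bound becomes
$$t = F(\gamma(t)) \geq \frac{r(\gamma(t))^2}{4} - \frac{\pi\eta}{2}\, r(\gamma(t)) + F(p) - |\nabla F(p)|.$$
Both are already in the desired form once the constant $C$ is taken to dominate the $p$-dependent quantities $\sqrt{|F(p)|}$, $|F(p)|$, $|\nabla F(p)|$, and $\pi\eta/2$. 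For instance, $C := 1 + \frac{\pi\eta}{2} + \sqrt{|F(p)|} + |F(p)| + |\nabla F(p)|$ suffices for both inequalities.

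The resulting $C$ depends only on the fixed base point $p$ and on the soliton data (through $\eta$); in particular it is uniform in the specific starting point $z \in \Pi_{\rho_0}$, because the identity $F(\gamma(t)) = t$ holds along every such integral curve regardless of which $z$ is chosen. The notation $C(\Pi_{\rho_0}, p)$ in the statement therefore just records the pieces of data that enter the estimate. There is essentially no obstacle here: the analytic content was packaged into Proposition \ref{F is approximately r^2/4} by combining Hamilton's soliton identities with the Chen–Deruelle distance comparison for $\sqrt{F}$, and the present claim is a one-line substitution that converts that distance comparison into the asymptotic relation between the level-set flow parameter $t$ and the ambient geodesic distance $r(\gamma(t))$.
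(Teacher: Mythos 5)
Your proposal is correct and follows essentially the same route as the paper: the paper likewise combines the identity $F(\gamma(t))=t$ (already recorded before the claim) with the potential-function bounds of Chen--Deruelle (cited there as \cite[Lemma 2.2]{CD15}, which is exactly the content of Proposition \ref{F is approximately r^2/4}) and absorbs the $p$-dependent quantities into the constant $C$. Your explicit choice of $C$ and the remark on uniformity in $z\in\Pi_{\rho_0}$ are fine.
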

    \begin{proof}[Proof of Claim \ref{t-simeq-distance-square}]
    Lower and upper bounds all follow from \cite[Lemma 2.2]{CD15}. More precisely, there is a constant $C(p,\M)>0$ such that
    \be
    t=F(\gamma(t))\geq\frac{1}{4}r(\gamma(t))^{2}-Cr(\gamma(t))-C,
    \ee
    and 
    \be
    t=F(\gamma(t))\leq\frac{1}{4}(r(\gamma(t))+C)^{2}+C
    \ee
    for all $t\geq0$ and $z\in\Pi_{\rho_{0}}$.
    \end{proof}
    For expanding solitons, the Bianchi identity implies
    \be
    \nabla_{j}\Rm_{ilkj}=\nabla_{i}\Ric_{kl}-\nabla_{l}\Ric_{ki}=\Rm_{ilkj}f_{j}
    \ee
    Therefore,
    \be\label{curvature-along-flow}
    \begin{aligned}
        \frac{d}{dt}|{\Rm}|^{2}(\gamma(t))=&~2R_{ijk\ell}R_{ijk\ell,m}\frac{F_{m}}{|{\nabla F}|^{2}}\\
        =&~\frac{2R_{ijkl}}{|\nabla F|^2{}}\left[(R_{ij\ell m}F_{m})_{,k}+(R_{ijmk} F_{m})_{,\ell}-R_{ij\ell m}F_{mk}-R_{ijmk}F_{m\ell}\right]\\
        \leq&~8\frac{|{\Rm}||{\nabla^{2}\Ric}|}{|\nabla F|^{2}}+4\frac{|{\Rm}|^{2}|{\Ric}|}{|\nabla F|^{2}}-2\frac{|{\Rm}|^{2}}{|\nabla F|^{2}},
    \end{aligned}
    \ee
    where we use the soliton equation in the last line. Since we have bounded curvature now, by \cite[Lemma 2.8]{D17} and Claim \ref{t-simeq-distance-square},
    \be
    |{\nabla^2\Ric}|(\gamma(t))\leq\frac{C}{t},
    \ee
    for $t\geq\rho_{0}+C'$, where the constant $C'$ is determined by the constant in Claim \ref{t-simeq-distance-square}). Also, by Claim \ref{t-simeq-distance-square} and the identity $|\nabla F|^{2}+\scal=F$, we have $\frac{t}{2}-C''\leq|\nabla F|\leq \frac{t}{2}+C''$ for $t\geq\rho_{0}+C'$. Thus (\ref{curvature-along-flow}) becomes
    \be
    \frac{d}{dt}|{\Rm}|(\gamma(t))\leq Ct^{-2}+Ct^{-2}|{\Rm}|-\frac{|{\Rm}|}{t+C}\leq-\frac{1}{t+C}|{\Rm}|+\frac{2C}{t^{2}},
    \ee
    for $t\geq\rho_{0}+C'$ since $|{\Rm}|$ is bounded. This implies 
    \be
    \left((t+C)|{\Rm}|(\gamma(t))\right)'\leq\frac{C}{t},
    \ee
    for $t\geq\rho_{0}+C'$. Then integrating it from $\rho_{0}+C'$ to $t$, we get $|{\Rm}|(\gamma(t))\leq C\log t/t$ for $t\geq \rho_{0}+C'$. Combining this with Claim \ref{t-simeq-distance-square}, we obtain the desired bound $\BigO(r^{-2}\log r)$. 
    \vskip0.2cm
    For $n=4$, by \cite[Proposition 3.1]{CL22}, we have
    \be
    |{\Rm}|\leq C_{0}\left(|{\Ric}|+\frac{|{\nabla\Ric}|}{|{\nabla f}|}\right),
    \ee
    on $\M$ for some universal constant $C_{0}>0$ (see also \cite{MW15}). Since $|{\Ric}|=\BigO(r^{-2})$ and $|{\Rm}|=\BigO(1)$ as $r\rightarrow\infty$, by the local Shi's estimate \cite[Lemma 2.8]{D17},
    \be
    |{\nabla\Ric}|\leq\frac{C}{r^{2}},
    \ee
    as $r\gg1$. Also, $|{\nabla f}|\geq r/4$ as $r\gg1$, this implies
    \be
    \limsup_{r\rightarrow\infty}r^{2}|{\Rm}|\leq C_{0}\limsup_{r\rightarrow\infty}\left(r^2|{\Ric}|+\frac{r^2|{\nabla\Ric}|}{|{\nabla f}|}\right)=C_{0}\eta<+\infty,
    \ee
    which implies the required estimate.
\end{proof}

\begin{proof}[Proof of Corollary \ref{cl-topology of ends on expander}]
    In Proposition \ref{prop-vol lower bound of expander}, we proved the $\mathcal{C}^{0}$-convergence and yield a $\mathcal{C}^{0}$-metric $g_{\infty}$ on $\Sigma_{R_{0}}$. We claim that the Ricci curvature on $(\Sigma_{R},R^{-2}g|_{\Sigma})$ is uniformly bounded.
    \begin{clm}\label{cl2-Almost Einstein}
        (Almost $\eta$-Einstein)
            \be
                \limsup_{r\rightarrow\infty}\left\|{\Ric(r^{-2}g|_{\Sigma_{r}})-(n-2)r^{-2}g|_{\Sigma_{r}}}\right\|_{r^{-2}g|_{\Sigma_{r}}}\leq\eta.
            \ee
    \end{clm}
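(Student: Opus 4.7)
The plan is to prove this via the Gauss equation, with the ambient normal-direction Riemann component tamed by a soliton Bianchi identity. Fix $x\in\Sigma_R$ with outward unit normal $\n=\nabla z/|{\nabla z}|$ and, for $X,Y\in T_x\Sigma_R$, write
\begin{equation*}
\Ric^{\Sigma_R}(X,Y)=\Ric^M(X,Y)-R^M(X,\n,\n,Y)+Hh_R(X,Y)-(h_R\circ h_R)(X,Y),
\end{equation*}
where $h_R=\Hess z/|{\nabla z}|$ on $T\Sigma_R$ and $H=\mathrm{tr}_{g|_{T\Sigma_R}}h_R$. The norm condition in the claim is the $(R^{-2}g|_{\Sigma_R})$-norm, which for a $(0,2)$-tensor is $R^2$ times the $g$-norm, so it suffices to show the $g$-norm of $\Ric^{\Sigma_R}-(n-2)R^{-2}g|_{T\Sigma_R}$ is at most $(\eta+o(1))R^{-2}$.

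Next I would extract the second-fundamental-form contribution from the formulas already derived in the proof of Proposition \ref{prop-vol lower bound of expander}. Using $\Hess z=\frac{1}{z}(g-dz\otimes dz)+\frac{2}{z}\Ric$ (which follows from the soliton equation and the chain rule for $z=2\sqrt{F}$) together with $|\nabla z|=1+O(R^{-2})$, the second fundamental form on $\Sigma_R$ admits the expansion
\begin{equation*}
h_R=\tfrac{1}{R}g|_{T\Sigma_R}+\tfrac{2}{R}\Ric|_{T\Sigma_R}+O(R^{-3})g|_{T\Sigma_R},
\end{equation*}
where the $\Ric$ term has $g$-norm at most $(2\eta+o(1))/R^3$ by hypothesis. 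A direct trace/composition calculation then gives $H=(n-1)/R+O(R^{-3})$ and
\begin{equation*}
Hh_R-h_R\circ h_R=\tfrac{n-2}{R^2}g|_{T\Sigma_R}+O(R^{-4}),
\end{equation*}
with error bounded in $g$-norm, so this piece contributes $o(R^{-2})$ beyond the desired leading term.

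The decisive step is handling $R^M(X,\n,\n,Y)$: here the naive bound via Theorem \ref{thm-expander with quadratic decay} only yields $O(\log R/R^2)$, which rescaled would diverge logarithmically. To gain an extra factor of $R^{-1}$, I would apply the soliton Bianchi identity \eqref{nable-Ric-expander}, namely $R^M_{ilkj}F^j=\nabla_i\Ric_{kl}-\nabla_l\Ric_{ki}$, to $R^M_{\n YX\n}=R^M_{X\n\n Y}$ together with $F^j=|\nabla F|\n^j$ to obtain
\begin{equation*}
R^M(X,\n,\n,Y)=\frac{1}{|{\nabla F}|}\bigl[\nabla_\n\Ric(X,Y)-\nabla_Y\Ric(X,\n)\bigr].
\end{equation*}
Combining Theorem \ref{thm-expander with quadratic decay} with a local Shi-type derivative estimate (as in \cite[Lemma 2.8]{D17}) on $B_1(x)$ gives $|{\nabla\Ric}|=O(\log R/R^2)$ for $n\ge 5$ (and $O(R^{-2})$ for $n=4$), while $|\nabla F|\sim R/2$ by Proposition \ref{F is approximately r^2/4}. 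Therefore $|R^M(\cdot,\n,\n,\cdot)|_g=O(\log R/R^3)=o(R^{-2})$.

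Assembling the three ingredients,
\begin{equation*}
\Ric^{\Sigma_R}-(n-2)R^{-2}g|_{T\Sigma_R}=\Ric^M|_{T\Sigma_R}+o(R^{-2})
\end{equation*}
in $g$-norm. Since $|\Ric^M|_g\leq\eta r^{-2}+o(r^{-2})$ and $r=R+O(\sqrt{R})$ on $\Sigma_R$ by \eqref{eq-expanding F like d2/4}, taking $g$-norm and multiplying by $R^2$ to convert to the $(R^{-2}g|_{\Sigma_R})$-norm yields the desired bound $\eta+o(1)$, and the limsup assertion follows. The main obstacle is precisely the step four identity: without trading off the soliton structure to convert the normal Riemann component into a first derivative of $\Ric$ divided by $|\nabla F|$, the $\Rm$ bound of Theorem \ref{thm-expander with quadratic decay} would be too weak to close the argument.
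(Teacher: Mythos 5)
Your proof is correct and follows essentially the same route as the paper: the Gauss equation for the Ricci tensor, the soliton Bianchi identity converting $\Rm(\cdot,\n,\n,\cdot)$ into $\nabla\Ric/|\nabla F|$ controlled by a local Shi estimate, and the expansion $h_R=\frac{1}{R}g|_{T\Sigma_R}+\BigO(R^{-3})$ yielding the $(n-2)R^{-2}$ leading term. The only cosmetic difference is your slightly more conservative bound $|\nabla\Ric|=\BigO(\log R/R^{2})$ where the paper gets $\BigO(R^{-2})$; either suffices since the normal-direction term is $o(R^{-2})$ after dividing by $|\nabla F|\sim R/2$.
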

    \begin{proof}[Proof of Claim \ref{cl2-Almost Einstein}]
    By the Gauss equation for the Ricci tensor,
        \be\label{eq-cl2-Gauss eq}
        \Ric(g|_{\Sigma_{Rr}})(X,X)=\Ric(g)(X,X)-\Rm(g)(X,\n,\n,X)-\sum_{i=1}^{n-1}\det h_{R}(X,E_{i}),
        \ee
        where $X\in T\Sigma_{R}$ and $\{E_{i}\}_{i=1}^{n-1}$ is an orthonormal basis of $T\Sigma_{R}$ with respect to $g$. By the Bianchi identity for expanding soliton, 
        \be
        \Rm(g)(X,\n,\n,X)=\frac{\nabla_{X}\Ric(g)(\n,X)-\nabla_{\n}\Ric(g)(X,X)}{|{\nabla F}|}.
        \ee
        Therefore, 
        \be
        |{\Rm(g)(\cdot,\n,\n,\cdot)}|\leq2\frac{|{\nabla\Ric(g)}|_{g}}{|\nabla F|}.
        \ee
        By Theorem \ref{thm-expander with quadratic decay}, $\Rm(g)$ is uniformly bounded on $\M$. Hence, by the local Shi's estimate \cite[Lemma 2.8]{D17} on $B(x,2\delta r(x))$ with $\delta\ll1$, we have
        \be
        \sup_{B(x,\delta r(x))}|{\nabla\Ric(g)}|\leq C\left(\sup_{A(x;\delta r(x),2\delta r(x))}\frac{|{\nabla F}|}{\delta r(x)},|{\Rm}|, n\right)\sup_{B(x,2\delta r(x))}|{\Ric}|\leq\frac{C}{r(x)^{2}},
        \ee
        for $r(x)\gg1$ and some constant $C>0$. Combining with $|{\nabla F}|=\frac{r(x)}{2}+\BigO(\sqrt{r(x)})$ and $r(x)=R+\BigO(1)$, we get
        \be\label{eq-cl2-normal direction}
        |{\Rm(g)(\cdot,\n,\n,\cdot)}|\leq\frac{C}{R^{3}},
        \ee
        for $R>R_{0}$ with some constant $R_{0}$ depending on the above constant $C$. Also, by (\ref{eq-Hess z}),
        \be
        h_{R}=\frac{\Hess z}{|{\nabla z}|}=\frac{1}{R}g|_{\Sigma_{R}}+\BigO(R^{-3}),
        \ee
        as $R\rightarrow\infty$. Therefore, 
        \be\label{eq-cl2-h_R}
        \begin{aligned}
            \sum_{i=1}^{n-1}\det h_{R}(X,E_{i})=&~\sum_{i=1}^{n-1}h_{R}(X,E_{i})^{2}-H_{R}h_{R}(X,X)\\
            =&~R^{-2}g(X,X)-(n-1)R^{-1}R^{-1}g(X,X)+\BigO(R^{-4})\\
            =&~(2-n)R^{-2}g(X,X)+\BigO(R^{-4}),
        \end{aligned}
        \ee
        as $R\rightarrow\infty$ for $X\in T\Sigma_{R}$. Combining (\ref{eq-cl2-Gauss eq}), (\ref{eq-cl2-normal direction}), (\ref{eq-cl2-h_R}), and the assumption on Ricci curvature, 
        \be
        \limsup_{R\rightarrow\infty}\left\|{\Ric(R^{-2}g|_{\Sigma_{R}})}-(n-2)R^{-2}g\right\|_{R^{-2}g}\leq\limsup_{R\rightarrow\infty}R^{2}\|\Ric(g)\|_{g}=\eta.
        \ee
        This completes the proof.
    \end{proof}
\end{proof} 
\begin{proof}[Proof of Corollary \ref{cl-structure of limit of fiber}]  By Claim \ref{cl-structure on fiber C0} and Claim \ref{cl2-Almost Einstein}, $(\Sigma_{R_{0}},d_{g_{\infty}})$ is a non-collapsed two-sided Ricci limit space. Note that by (\ref{eq-uniform convergence of metric}), for any $\eta,\epsilon>0$ and $x\in\Sigma_{R_{0}}$, there are constants $R_{\eta}\geq R_{0}$ and $r_{x,\eta,\epsilon}\in(0,1)$ such that 
\be
(1-\eta)^{2}s^{-2}\varphi_{s-R_{0}}^{*}g|_{\Sigma_{s}}\leq t^{-2}\varphi_{t-R_{0}}^{*}g|_{\Sigma_{t}}\leq(1+\eta)^{2}s^{-2}\varphi_{s-R_{0}}^{*}g|_{\Sigma_{s}}
\ee
for all $t\geq s\geq R_{\eta}$ and \be
    \frac{\vol_{R_{\eta}^{-2}\varphi_{R_{\eta}-R_{0}}^{*}g|_{\Sigma_{R_{\eta}}}}(x,r)}{\omega_{n-1}r^{n-1}}\in(1-\epsilon,1+\epsilon),
\ee
for all $r\in(0,r_{x,\eta,\epsilon})$. Therefore, for $t\geq R_{\eta}$, we have
\begin{eqnarray*}
    \frac{\vol_{t^{-2}\varphi_{t-R_{0}}^{*}g|_{\Sigma_{t}}}(x,(1+\eta)r)}{\omega_{n-1}r^{n-1}}&\geq&\frac{(1-\eta)^{n-1}\vol_{R_{\eta}^{-2}\varphi_{R_{\eta}-R_{0}}^{*}g|_{\Sigma_{R_{\eta}}}}(x,r)}{\omega_{n-1}r^{n-1}}\\
    &\geq&(1-\eta)^{n-1}(1-\varepsilon)\\
\Rightarrow~\frac{\vol_{t^{-2}\varphi_{t-R_{0}}^{*}g|_{\Sigma_{t}}}(x,\hat{r})}{\omega_{n-1}\hat{r}^{n-1}}&\geq& (1-\eta^{2})^{n-1}(1-\epsilon),
\end{eqnarray*}
for all $\hat{r}\in(0,\frac{r_{x,\eta,\epsilon}}{1+\eta})$. On the other hands, by Corollary \ref{cl-topology of ends on expander}, $|\Ric(t^{-2}\varphi_{t-R_{0}}^{*}g|_{\Sigma_{t}})|\leq n-2+\eta$ has a uniform bound for $t\geq R_{0}$. Thus, by \cite[Remark 3.3]{A90} (see also \cite[Theorem 2.1]{H19}) and set $\eta,\epsilon$ small enough so that $(1-\eta^{2})^{n-1}(1-\epsilon)\geq 1-\varepsilon(n-1,n-2+\eta)$, where $\varepsilon(n-1,n-2+\eta)$ is the constant in \cite[Remark 3.3]{A90}, the metric $t^{-2}\varphi_{t-R_{0}}^{*}g|_{\Sigma_{t}}$ converge in the $\mathcal{C}^{1,\alpha}$-sense to $g_{\infty}$ on $B_{g_{\infty}}(x,\tilde{r})$ for some $\tilde{r}\in(0,\frac{r_{x,\eta,\epsilon}}{1+\eta})$. Since $x\in\Sigma_{R_{0}}$ is arbitrary and $\Sigma_{R_{0}}$ is compact, we yield a $\mathcal{C}^{1,\alpha}$-convergence on $\Sigma_{R_{0}}$ and the limiting metric $g_{\infty}$ is $\mathcal{C}^{1,\alpha}$ for all $\alpha\in(0,1)$.
    \newline
\vskip0.1cm
   It remains to show the $\mathcal{C}^{1,\alpha}_{\text{loc}}$-convergence on $C(\Sigma_{R_{0}})\setminus\{o\}$. For $x\in\Sigma_{R}$, by Colding's volume convergence theorem \cite{C97}, for any $\varepsilon>0$, there is an $r_{x,\varepsilon}>0$ such that $\vol_{g_{\infty}}(x,r)\geq(1-\varepsilon)\omega_{n-1}r^{n-1}$ for $r\in[0,r_{x,\varepsilon}]$. Then for any $T>0$, there is a $\tilde{r}(x,\varepsilon,T)\ll r_{x,\epsilon}$ such that $\vol_{dz^{2}+z^{2}g_{\infty}}((T,x),r)\geq(1-\varepsilon'(\varepsilon))\omega_{n}r^{n}$ for all $r\leq\tilde{r}$, $\varepsilon'$ is a positive constant such that $\lim_{\varepsilon\rightarrow0}\varepsilon'=0$. By the $\mathcal{C}^{0}$-convergence via $\Psi_{t}$, this implies $\vol_{(\Psi_{t}^{-1})^{*}t^{-2}g}(T,x)\geq(1-\varepsilon'(\varepsilon))\omega_{n}r^{n}$ for all $r\leq \tilde{r}$ and $t\geq R(x,T,\varepsilon)$. Now, by \cite[Remark 3.3]{A90}, after choosing $\varepsilon':=\epsilon(n,T)$ where $\epsilon$ is the constant in \cite[Remark 3.3]{A90}, the $\mathcal{C}^{1,\alpha}$-norm at $(x,T)$ with respect to the metric $(\Psi_{t}^{-1})^{*}t^{-2}g$ is uniformly bounded for $t\gg1$. Since the $\mathcal{C}^{0}$-limit is unique and $\alpha\in(0,1)$ is arbitrary, these imply the uniform $\mathcal{C}^{1,\alpha}_{\text{loc}}$-convergence on $C(\Sigma_{R_{0}})\setminus\{o\}$ for any $\alpha\in(0,1)$.
    \end{proof}
\section{Main Theorems - steady soliton}\label{sec-proof of steady case}
Thought out this section, we denote $z:=F:=-f$, the level set $\Sigma_{R}=\{F=R\}$, and the flow $\frac{d}{dt}\varphi_{t}=\frac{\nabla F}{|{\nabla F}|^{2}}$ on the region $|{\nabla F}|>0$. Set $R_{0}>0$ with $|{\nabla F}|>.5$ on $\{F\geq R_{0}-4\}$. Also, by Proposition \ref{F is approximately r} $F=r(x)+\BigO(\log r(x))$, $|{\nabla F}|=\sqrt{1-\scal}=1+\BigO(r^{-1})$, and $\Hess F=\BigO(r^{-1})$. Let $g_{R}=g|_{\Sigma_{R}}$ be the induced metric on $\Sigma_{R}$ and $\n=\frac{\nabla F}{|{\nabla F}|}$ be the normal vector on $\Sigma_{R}$. Since $\mathcal{L}_{\n}g=2h_{R}$, where $h_{R}=\frac{\Hess F}{|{\nabla F}|}$
\be\label{eq-evolution of g|R- steady}
\frac{\partial}{\partial z}g_{R}=\frac{1}{|{\nabla F}|}\mathcal{L}_{\n}g=\frac{2}{|{\nabla F}|}h_{R}=\frac{2\Hess F}{|{\nabla F}|^{2}}=\BigO(z^{-1})g_{R},
\ee
for $z\geq R_{0}$. This implies that there exists a constant $C>0$ such that 
\be
    \frac{\partial}{\partial z} (z^{C}g_{R})\geq0\geq \frac{\partial}{\partial z}(z^{-C}g_{R}),
\ee
for $z\geq R_{0}$. Using the diffeomorphism $\varphi_{R-R_{0}}:\Sigma_{R_{0}}\rightarrow\Sigma_{R}$ to pullback the metric $g_{R}$, 
\be
    \left(\frac{s}{t}\right)^{C}\varphi_{s-R_{0}}^{*}g|_{\Sigma_{s}}\leq\varphi_{t-R_{0}}^{*}g|_{\Sigma_{t}}\leq\left(\frac{t}{s}\right)^{C}\varphi_{s-R_{0}}^{*}g|_{\Sigma_{s}},
\ee
on $\Sigma_{R_{0}}$, for all $t\geq s\geq R_{0}$. Also, $g(\frac{\partial}{\partial z},\frac{\partial}{\partial z})=|{\nabla F}|^{-2}\in[1,4]$. Therefore, for $T>T(R_{0},C)$ large enough, on the annulus region $z^{-1}[T+R_{0},T+R_{0}+2]$, we have
\be
\left(\frac{R_{0}}{T+R_{0}+2}\right)^{C}g\leq\varphi_{T}^{*}g\leq\left(\frac{T+R_{0}+2}{R_{0}}\right)^{C}g,
\ee
on $z^{-1}[R_{0},R_{0}+2]$. Choose $v>0$ such that $\frac{\vol_{g}(x,r)}{r^{n}}\geq v$ for all $x\in\Sigma_{R_{0}+1}$ and $r\in[0,1]$. Hence, for $z(x)=T+R_{0}+1\gg1$,
\be
\begin{aligned}
    \vol_{g}(x,1)=\vol_{\varphi_{T}^{*}g}(\varphi_{-T}(x),1)\geq&~\left(\frac{R_{0}}{T+R_{0}+2}\right)^{\frac{n}{2}C}\vol_{g}\left(\varphi_{-T}(x),\left(\frac{R_{0}}{T+R_{0}+2}\right)^{\frac{C}{2}}\right)\\
    \geq&~\left(\frac{R_{0}}{T+R_{0}+2}\right)^{nC}v\\
    \geq&~(1+r(x))^{-nC}\tilde{v},
\end{aligned}
\ee
where $\tilde{v}$ is a positive constant independent of $x$.

In summary, we have proved the following lemma.
\begin{lm}\label{unit volume for steady soliton}
    Let $(\M,g,f)$ be a complete non-Ricci flat gradient steady Ricci soliton. Suppose that $f$ is proper and the Ricci curvature decays linearly. Then there are two constants $c,N>0$ and $x_{0}\in\M$ such that 
    \be
    \vol(x,1)\geq c(1+d_{g}(x,x_{0}))^{-N},
    \ee
    for all $x\in\M$.
\end{lm}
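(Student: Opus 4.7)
The plan is to use the gradient flow of $\nabla F/|\nabla F|^{2}$ to transport a uniform local volume lower bound from a fixed compact annular slab near the level set $\Sigma_{R_{0}}$ outward to infinity, incurring only a polynomial distortion. By Proposition \ref{F is approximately r}, under properness of $f$ and linear Ricci decay, $F=r+O(\log r)$ and $|\nabla F|\ge 1/2$ outside a compact set, so the flow $\varphi_{t}$ generated by $\nabla F/|\nabla F|^{2}$ is defined for all $t\ge 0$ on $\{F\ge R_{0}\}$ and carries $\Sigma_{s}$ diffeomorphically onto $\Sigma_{s+t}$. Moreover $|\Hess F|=|\Ric|=O(z^{-1})$ on this region, where $z:=F$.

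First I would compute the evolution of the induced metric $g_{R}:=g|_{\Sigma_{R}}$. Since $\mathcal{L}_{\n}g=2\Hess F/|\nabla F|$, a direct calculation gives
\[
\partial_{z}g_{R}=\frac{2\Hess F}{|\nabla F|^{2}}=O(z^{-1})g_{R},
\]
a Gronwall-type ODE whose integration along the flow yields the two-sided polynomial comparison
\[
\bigl(s/t\bigr)^{C}\varphi_{s-R_{0}}^{*}g|_{\Sigma_{s}}\le\varphi_{t-R_{0}}^{*}g|_{\Sigma_{t}}\le\bigl(t/s\bigr)^{C}\varphi_{s-R_{0}}^{*}g|_{\Sigma_{s}},
\]
for all $t\ge s\ge R_{0}$, pulled back to the fixed manifold $\Sigma_{R_{0}}$.

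Next, combining this tangential estimate with the pointwise bound $g(\partial_{z},\partial_{z})=|\nabla F|^{-2}\in[1,4]$ outside a compact set, the full metric $\varphi_{T}^{*}g$ on a slab $z^{-1}[T+R_{0},T+R_{0}+2]$ is sandwiched between polynomial multiples of $g$ on the fixed compact slab $z^{-1}[R_{0},R_{0}+2]$. Because the fixed slab is compact and $g$ is smooth, there is some $v>0$ with $\vol_{g}(y,r)\ge vr^{n}$ for every $y$ in the slab and every $r\in(0,1]$. Given $x\in\M$ with $r(x)$ large, locate $x$ in a slab $z^{-1}[T+R_{0},T+R_{0}+2]$ with $T\asymp r(x)$, and apply $\varphi_{-T}$: the unit $g$-ball at $x$ contains a pull-back of a $g$-ball of radius $\sim(1+r(x))^{-C/2}$ centered at $\varphi_{-T}(x)\in z^{-1}[R_{0},R_{0}+2]$, while the volume element rescales by a Jacobian bounded below by $(1+r(x))^{-nC/2}$. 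Combining both factors gives $\vol(x,1)\ge c(1+r(x))^{-N}$ for a suitable $N$ and $c$ independent of $x$.

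The main technical point to watch is ensuring that the unit ball $B_{g}(x,1)$ genuinely lies inside a slab to which the foliation argument applies; this is exactly where the uniform bound $|\nabla F|^{-2}\le 4$ is used, so that motion in $g$ of length one cannot change $z$ by more than a controlled amount, and a $z$-slab of thickness $2$ suffices. Beyond this, the argument is essentially Gronwall comparison together with compactness on a single annular slab, so no deep analytical input is required — only careful tracking of polynomial distortion factors.
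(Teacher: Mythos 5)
Your proposal is correct and follows essentially the same route as the paper: the evolution $\partial_z g_R = 2\Hess F/|\nabla F|^2 = \BigO(z^{-1})g_R$ with Gronwall gives the polynomial two-sided comparison of the pulled-back slice metrics, which together with $g(\partial_z,\partial_z)=|\nabla F|^{-2}\in[1,4]$ sandwiches $\varphi_T^{*}g$ on a fixed compact slab and transports the uniform volume lower bound there out to infinity with polynomial loss. The only minor quibble is that the fact that a unit $g$-ball stays in a slab of thickness $2$ follows from $|\nabla F|\le 1$ (so $z$ is $1$-Lipschitz) rather than from $|\nabla F|^{-2}\le 4$, but both bounds are available and this does not affect the argument.
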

\begin{proof}[Proof of Theorem \ref{thm-steady with linear decay}]
    The proof is the same as the first part of the proof of Theorem \ref{thm-expander with quadratic decay}. By Theorem \ref{int-estimate}, there are $a,p>0$ such that
    \be
    \int_{\M}|{\Rm}|^{p}F^{-a}<\infty.
    \ee
    (For instance, we can take $a>cp^{5}$ and $p\geq n$.) Then this implies 
    \be\label{p-norm-curvature}
        \int_{B_{1}(x)}|{\Rm}|^{p}\leq C(1+r(x))^{a}
    \ee
    for all $x\in\M$. Also, by (\ref{Laplacian-Rm}), the evolution equation of $|{\Rm}|^{2}$ becomes
    \be
    \begin{aligned}
        \Delta|{\Rm}|^{2}\geq&~2|{\nabla\Rm}|^{2}-\langle\nabla F,\nabla|{\Rm}|^{2}\rangle-c|{\Rm}|^{3}\\
        \geq&~2|{\nabla\Rm}|^{2}-2|{\nabla F}||{\Rm}||{\nabla\Rm}|-c|{\Rm}|^{3}\\
        \geq&~2|{\nabla\Rm}|^{2}-c|{\nabla F}|^{2}|{\Rm}|^{2}-\frac{1}{4}|{\nabla\Rm}|^{2}-c|{\Rm}|^{3}\\
        \geq&-c(|{\Rm}|+1)|{\Rm}|^{2}.
    \end{aligned}
    \ee
    Now we apply the Moser's iteration again, 
    \be\label{Moser-iteration}
    |{\Rm}|(x)^{2}\leq \tilde{C}\left[\left(\frac{\int_{B_{1}(x)}|{\Rm}|^{p}}{\vol(x,1)}\right)^{\frac{1}{p-\frac{n}{2}}}+1\right]^{\frac{n}{p}}\left(\frac{\int_{B_{1}(x)}|{\Rm}|^{p}}{\vol(x,1)}\right)^{\frac{2}{p}}
    \ee
    where $\tilde{C}$ depends on $n,p$ and the Sobolev constant on $B_{1}(x)$. Since the Ricci curvature is uniformly bounded on $\M$, $\tilde{C}$ is a constant. Now, by Lemma \ref{unit volume for steady soliton}, we have $\vol(x,1)\geq c(1+r(x))^{-N}$ for some constants $c,N>0$. Plugging this and (\ref{p-norm-curvature}) into (\ref{Moser-iteration}), it implies a polynomial bound of $|{\Rm}|$. This completes the proof.
\end{proof}
\begin{proof}[Proof of Corollary \ref{cl-of-bdd-curvature}]
    Since $f$ is proper and (\ref{normalized cond}) holds, we may assume $|\nabla F|\geq1/2$ on $\{F\geq\rho_{0}\}$. Define $\Sigma_{r}:=\{F=r\}$ for all $r\in\R$. Then for $R\geq\rho_{0}$ and $x\in\Sigma_{R}$, there is the flow $\gamma$ of the vector field $\frac{\nabla F}{|\nabla F|^{2}}$ starting from $\gamma(\rho_{0}):=y\in\Sigma_{\rho_{0}}$ to $\gamma(R):=x$. Note that
    \be\label{curvature-along-flow-steady}
    \begin{aligned}
        \frac{d}{dt}|{\Rm}|(\gamma(t))^{2}=&~2R_{ijk\ell}R_{ijk\ell,m}\frac{F_{m}}{|\nabla F|^{2}}\\
        =&~\frac{2R_{ijkl}}{|\nabla F|^2{}}\left[(R_{ij\ell m}F_{m})_{,k}+(R_{ijmk} F_{m})_{,\ell}-R_{ij\ell m}F_{mk}-R_{ijmk}F_{m\ell}\right]\\
        \leq&~8\frac{|{\Rm}||{\nabla^{2}\Ric}|}{|\nabla F|^{2}}+4\frac{|{\Rm}|^{2}|{\Ric}|}{|\nabla F|^{2}}
    \end{aligned}
    \ee
    where we apply the Bianchi identity at the second line and (\ref{nable-Ric}) at the last line. Since $|\nabla F|\leq1$, 
    \be
    t-\rho_{0}=F(\gamma(t))-F(\gamma(\rho_{0}))\leq d(y,\gamma(t))\leq d(x_{0},\Sigma_{\rho_{0}})+r(\gamma(t)).
    \ee
    On the other hand, 
    \be
    \begin{aligned}
        r(\gamma(t))-d(x_{0},\Sigma_{\rho_{0}})\leq&~ d(y,\gamma(t))\\
        \leq&~\int_{\rho_{0}}^{t}\frac{1}{|\nabla F|}(\gamma(w))dw\\
        \leq&~2(t-\rho_{0}).
    \end{aligned}
    \ee
    Therefore, there is a constant $C(\rho_{0},x_{0},\Sigma_{\rho_{0}})>0$ such that
    \be\label{t-sim-distance}
        \frac{1}{2}r(\gamma(t))-C\leq t\leq r(\gamma(t))+C,
    \ee
    for $t\geq\rho_{0}$. 
        
    Now under our assumption, for any $p>3$, there is a $r_{0}>0$ such that 
    \be
    |{\Ric}|\leq\frac{1}{p^{5}F},
    \ee
    outside $D(r_{0})$. Therefore, by Theorem \ref{int-estimate}, for  $\delta=p^{-5}$ and $a=N+c$, which is a constant independent of $p$, 
    \be
    \int_{B_{1}(z)}|{\Rm}|^{p}\leq C_{p}(1+r(z))^{a}
    \ee
    for some constant $C_{p}>0$. As the proof of Theorem \ref{thm-steady with linear decay}, we adopt Moser's iteration and yield
    \be
    |{\Rm}|(z)^{2}\leq C_{p}'(1+r(z))^{\frac{4(a+N)}{p}},
    \ee
    for all $z\in\M$ and for some constant $C_{p}'>0$. Take $p=\frac{8(a+N)}{\varepsilon}$. Then
    \be\label{poly-growth-curvature}
    |{\Rm}|(z)\leq C'(1+r(z))^{\frac{\varepsilon}{2}},
    \ee
    for all $z\in\M$, where $\varepsilon>0$ is the positive constant in (\ref{Ricfast}). By assumption (\ref{Ricfast}), there is a constant $C''>0$ such that 
    \be\label{Ricci-decay-rapidly}
    |{\Ric}|(z)\leq C''(1+r(z))^{-1-\varepsilon},
    \ee
    for all $z\in\M$. Combining (\ref{curvature-along-flow-steady}), (\ref{t-sim-distance}), (\ref{poly-growth-curvature}), (\ref{Ricci-decay-rapidly}), and the local Shi's estimate \cite[Remark 2.7, Lemma 2.8]{D17},
    \be
    \frac{d}{dt}|{\Rm}|(\gamma(t))\leq\frac{\tilde{C}}{t^{1+\frac{\varepsilon}{2}}},
    \ee
    for $t\geq\rho_{0}+C'''$, where $\tilde{C}$ is a constant independent of $\gamma$ and $C'''$ depends on the constant in (\ref{t-sim-distance}). Integrate it from $t=\rho_{0}+C'''$ to $t=R$, we get 
    \be
    |{\Rm}|(x)\leq \sup_{\Sigma_{\rho_{0}+C'''}}|{\Rm}| +\frac{2\tilde{C}}{\varepsilon}(\rho_{0}+C''')^{-\frac{\varepsilon}{2}}=:\hat{C}<\infty,
    \ee
    where $\hat{C}$ is a finite positive constant. This shows the boundedness of $|{\Rm}|$.

    Now if in addition $|{\Ric}|\leq C_0 \scal$ for some positive constant $C_0$. Then the scalar curvature $\scal=O(r^{-1-\varepsilon})$ satisfies
    \[
    \Delta_f \scal=-2|{\Ric}|^2\ge -2C_0^2\scal^2.
    \]
    The exponential decay of $|{\Ric}|$ then follows from \cite[Proposition 1]{C19} and our assumption $|{\Ric}|\leq C_0 \scal$ on $\M$.
\end{proof}
\begin{rmk}
    Based on the proof of Corollary \ref{cl-of-bdd-curvature}, one can generalize the condition $|{\Ric}|=o(r^{-1})$ and show that $|{\Rm}|=\BigO(r^{\epsilon})$ for all $\epsilon>0$ by choosing $\delta=p^{-5}$ and letting $p\rightarrow\infty$.
\end{rmk}
\begin{proof}[Proof of Corollary \ref{cl-asymptotic limit of super-linear decay}]
    By (\ref{eq-evolution of g|R- steady}) and $|{\Ric}|=\BigO(z^{-1-\varepsilon})$,
    \be
   \left|\frac{\partial}{\partial z}g_{R}\right|\leq\frac{C}{z^{1+\varepsilon}},
    \ee
    for $z\geq R_{0}$. This implies
    \be
    \exp\left(Ct^{-\varepsilon}-Cs^{-\varepsilon}\right)\varphi_{s-R_{0}}^{*}g_{s}\leq\varphi_{t-R_{0}}^{*}g_{t}\leq\exp\left(Cs^{-\varepsilon}-Ct^{-\varepsilon}\right)\varphi_{s-R_{0}}^{*}g_{s},
    \ee
    for all $t\geq s\geq R_{0}$, and hence $\varphi_{R-R_{0}}^{*}g_{R}$ uniformly converges to some continuous Riemannian metric $g_{\infty}$ on $\Sigma_{R_{0}}$. 

    \vskip0.2cm
    Now, we claim that $|{\Ric(\varphi_{R-R_{0}}^{*}g_{R})}|\rightarrow0$ as $R\rightarrow\infty$. By the Gauss equation for the Ricci tensor,
        \be\label{eq-cl2-Gauss eq-steady}
        \Ric(g|_{\Sigma_{Rr}})(X,X)=\Ric(g)(X,X)-\Rm(g)(X,\n,\n,X)-\sum_{i=1}^{n-1}\det h_{R}(X,E_{i}),
        \ee
        where $X\in T\Sigma_{R}$ and $\{E_{i}\}_{i=1}^{n-1}$ is an orthonormal basis of $T\Sigma_{R}$ with respect to $g$. By the Bianchi identity for expanding soliton, 
        \be
        \Rm(g)(X,\n,\n,X)=\frac{\nabla_{X}\Ric(g)(\n,X)-\nabla_{\n}\Ric(g)(X,X)}{|{\nabla F}|}.
        \ee
        Therefore, 
        \be
        |{\Rm(g)(\cdot,\n,\n,\cdot)}|\leq2\frac{|{\nabla\Ric(g)}|_{g}}{|\nabla F|}.
        \ee
        By Corollary \ref{cl-of-bdd-curvature}, $\Rm(g)$ is uniformly bounded on $\M$. Hence, by the local Shi's estimate \cite[Lemma 2.8]{D17} on $B(x,2\delta r(x))$ with $\delta\ll1$, we have
        \be
        \sup_{B(x,\delta r(x))}|{\nabla\Ric(g)}|\leq C\left(\sup_{A(x;\delta r(x),2\delta r(x))}\frac{|{\nabla F}|}{\delta r(x)},|{\Rm}|, n\right)\sup_{B(x,2\delta r(x))}|{\Ric}|\leq\frac{C}{r(x)^{1+\varepsilon}},
        \ee
        for $r(x)\gg1$ and some constant $C>0$. Combining with $|{\nabla F}|=1+o(1)$,
        \be\label{eq-cl2-normal direction-steady}
        |{\Rm(g)(\cdot,\n,\n,\cdot)}|\leq\frac{C}{R^{1+\varepsilon}},
        \ee
        for $R>R_{0}$ with some constant $R_{0}$ depending on the above constant $C$. Also, 
        \be
        h_{R}=\frac{\Hess F}{|{\nabla F}|}=\BigO(R^{-1-\varepsilon}),
        \ee
        as $R\rightarrow\infty$. Therefore, 
        \be\label{eq-cl2-h_R-steady}
        \begin{aligned}
            \sum_{i=1}^{n-1}\det h_{R}(X,E_{i})=&~\sum_{i=1}^{n-1}h_{R}(X,E_{i})^{2}-H_{R}h_{R}(X,X)\\
            =&~\BigO(R^{-2-2\varepsilon})g(X,X),
        \end{aligned}
        \ee
        as $R\rightarrow\infty$ for $X\in T\Sigma_{R}$. Combining (\ref{eq-cl2-Gauss eq-steady}), (\ref{eq-cl2-normal direction-steady}), (\ref{eq-cl2-h_R-steady}), and the assumption on Ricci curvature, 
        \be
        \limsup_{R\rightarrow\infty}\left\|{\Ric(g_{R})}\right\|_{g_{R}}=0.
        \ee
        Also, by Gauss equation again for sectional curvature,
        \be\label{eq-Gauss eq of Rm}
            \Rm(g|_{\Sigma{R}})(X,Y,Y,X)=\Rm(g)(X,Y,Y,X)-h_{R}(X,X)h_{R}(Y,Y)+h_{R}(X,Y)^{2},
        \ee
        for all $X,Y\in T\Sigma_{R}$. Since $\Rm(g)$ is bounded on $\M$ and $h_{R}\rightarrow0$, these imply $\Rm(g_{R})$ is also bounded on $\Sigma_{R}$ as $R\geq R_{0}$.
        
        Now, since metric $\varphi_{R-R_{0}}^{*}g_{R}$ uniformly converge to $g_{\infty}$ and $\Rm(g_{R})$ is uniformly bounded, there is a constant $i_{0}>0$ such that $\inj(x,\varphi_{R-R_{0}}^{*}g_{R})\geq i_{0}$ for all $x\in\Sigma_{R_{0}}$ and $R\geq R_{0}$. Then the $\mathcal{C}^{1,\alpha}$-compactness theorem ensures that the convergence is $\mathcal{C}^{1,\alpha}$ for all $\alpha\in(0,1)$. Also, combining with $\Ric(g_{R})\rightarrow0$ as $R\rightarrow\infty$, using the harmonic coordinate w.r.t. $\varphi_{R-R_{0}}^{*}g_{R}$, which has uniform size and uniform $\mathcal{C}^{1,\alpha}$-bounds due to $\mathcal{C}^{1,\alpha}$-convergence, and the $\mathcal{C}^{1,\alpha}$-convergence, the limiting metric is smooth and Ricci flat, see the proof of \cite[Main Lemma 2.2]{A90} for a detailed argument.
        
        It remains to show the $\mathcal{C}^{\infty}_{\text{loc}}$-convergence on $\M$. 
        Consider the map $\Psi(x):=(F(x),\varphi_{F(x)-R_{0}}(x))$ from $\{z\geq R_{0}\}$ into $[0,\infty)\times\Sigma_{R_{0}}$. Then 
        \be
        \lim_{z\rightarrow\infty}\|(\Psi^{-1})^{*}g-(dz^{2}+g_{\infty})\|_{dz^2 +g_{\infty}}=0.
        \ee
        So $(\M,g,p_{k})$ converges to $(\R\times \Sigma_{R_{0}},dz^{2}+g_{\infty},(0,o))$ in the $\mathcal{C}^{0}$-sense if $p_{k}\rightarrow\infty$ and if $\varphi_{R_{0}-F(p_{k})}(p_{k})\rightarrow o$. Since $\Rm(g)$ is uniformly bounded and the $\mathcal{C}^{0}$-convergence implies the uniform lower bound of unit volume, these is a constant $i_{1}>0$ such that $\inj(x,(\Psi^{-1})^{*}g)\geq i_{1}$ for all $z(x)\geq R_{0}+i_{1}$. Hence, by \cite[Lemma 2.6]{D17} (or standard Shi's estimate), for any $k\geq1$, there is a constant $C_{k}$ such that $|{\nabla^{k}\Rm(g)}|_{g}\leq C_{k}$. By pointed Hamilton compactness theorem, there is a subsequence $(\M,g,p_{k})$ that converges to some pointed complete smooth manifold $(\N^{n},h,q)$ smoothly and locally. Since $(\M,g,p_{k})$ converges to $(\R\times\Sigma_{R_{0}},dz^{2}+g_{\infty},(0,o))$ in the $\mathcal{C}^{0}$-sense, this implies $(\N^{n},h,q)=(\R\times\Sigma_{R_{0}},dz^{2}+g_{\infty},(0,o))$. Therefore, the limit is unique and is $(\R\times\Sigma_{R_{0}},dz^{2}+g_{\infty},(0,o))$. If we further assume $\lim_{r\rightarrow\infty}|{\Rm}(g)|=0$, then by $(\ref{eq-Gauss eq of Rm})$, $|{\Rm(g_{R})}|_{g_{R}}\rightarrow0$ as $R\rightarrow\infty$. Then the smooth convergence deduces $(\R\times\Sigma_{R_{0}},dz^{2}+g_{\infty})$ is a flat manifold. 
\end{proof}
\begin{rmk}
    One can generalize to the case $|{\Ric}|=\BigO(h(r)/r)$, where $h:\R^{+}\rightarrow\R^{+}$ is a non-increasing function and $h(r)/r\in\mathcal{L}^{1}(\R^{+})$. Follow the same strategy as above, the limiting metric $g_{\infty}$ exists on level set $\Sigma_{R_{0}}$, $|{\Rm}|=\BigO(r^{\epsilon})$, and $|{\nabla\Ric}|=\BigO(r^{-1+\varepsilon/2})$ by \cite[Lemma 2.7]{D17}. Therefore, by Gauss equation again, the Ricci curvature of $\varphi_{R-R_{0}}^{*}g|_{\Sigma_{R}}$ tends to zero as $R\rightarrow\infty$. Adopting the same argument as above can show $\mathcal{C}^{1,\alpha}_{\text{loc}}$-convergence on the level set and asymptotic limit $p_{k}\rightarrow\infty$. However, without uniform Riemann curvature bounds $|{\Rm}|$, It is unclear to us how to derive the smooth convergence.
\end{rmk}
\begin{rmk}
    In Corollary \ref{cl-asymptotic limit of super-linear decay}, using the Gauss equation with a higher derivative, since $|{\nabla^{k}\Rm}(g)|_{g}\leq C_{k}$ on $\M$ and $h_{R}=\Hess(F)/|{\nabla F}|$, we can deduced uniform bounds on $\hat{\nabla}^{k}\Rm(\varphi_{R-R_{0}}^{*}g|_{\Sigma_{R}})$ on $\Sigma_{R_{0}}$, where $\hat{\nabla}$ is the Levi-Civita connection of $\varphi_{R-R_{0}}^{*}g|_{\Sigma_{R}}$. Therefore, combining with the $\mathcal{C}^{0}$-convergence, we could still conclude the smooth convergence on the level set.
\end{rmk}

\end{document}